\newtheorem{lemma}{Lemma}
\newtheorem{proposition}[lemma]{Proposition}
\newtheorem{theorem}[lemma]{Theorem}
\newtheorem{corollary}[lemma]{Corollary}
\theoremstyle{definition}
\newtheorem{example}[lemma]{Example}
\newtheorem{remark}[lemma]{Remark}
\newtheorem{conjecture}[lemma]{Conjecture}
\newtheorem{claim}[lemma]{Claim}
\newcommand{\covectors}{\ensuremath{\mathcal{L}}}
\newcommand{\conv}{\ensuremath{{\rm conv}}}
\newcommand{\gate}{\ensuremath{{\rm gate}}}
\newcommand{\uX}{\ensuremath{\underline{\mathcal{X}}}\xspace}
\newcommand{\oX}{\ensuremath{\overline{\mathcal{X}}}\xspace}
\newcommand{\OM}{\mathrm{OM}}
\newcommand{\AOM}{\mathrm{AOM}}
\renewcommand{\SS}{\ensuremath{\mathcal{S}}}
\newcommand{\uparr}{\ensuremath{\hspace{3pt}\uparrow\hspace{-2pt}}}
\DeclareMathOperator{\vcd}{VC-dim}
\renewcommand{\SS}{\ensuremath{\mathcal{S}}}
\title{Two-dimensional partial cubes\thanks{The work on this paper was supported by ANR project DISTANCIA (ANR-17-CE40-0015).}}
\author[V.\ Chepoi]{Victor Chepoi}
\address{Laboratoire d'Informatique et Systh\`emes, Aix-Marseille Universit\'e and CNRS, Facult\'e des Sciences de Luminy, F-13288 Marseille Cedex 9, France}
\email{victor.chepoi@lis-lab.fr}
\author[K.\ Knauer]{Kolja Knauer}
\address{Laboratoire d'Informatique et Systh\`emes, Aix-Marseille Universit\'e and CNRS, Facult\'e des Sciences de Luminy, F-13288 Marseille Cedex 9, France\\Departament de Matem\`atiques i Inform\`atica,
Universitat de Barcelona (UB), Barcelona, Spain}
\email{kolja.knauer@lis-lab.fr}
\author[M.\ Philibert]{Manon Philibert}
\address{Laboratoire d'Informatique et Systh\`emes, Aix-Marseille Universit\'e and CNRS, Facult\'e des Sciences de Luminy, F-13288 Marseille Cedex 9, France}
\email{manon.philibert@lis-lab.fr}
\begin{document}

\maketitle

\begin{abstract}
We investigate the structure of two-dimensional partial cubes, i.e., of
isometric subgraphs of hypercubes whose vertex set defines a set family of VC-dimension at most 2.
Equivalently, those are the partial cubes which are not contractible to the 3-cube $Q_3$ (here contraction
means contracting the edges corresponding to the same coordinate
of the hypercube). We show that our graphs can be obtained from two types of
combinatorial cells (gated cycles and gated full subdivisions of complete graphs)
via amalgams. The cell structure of two-dimensional partial cubes enables us to
establish a variety of results. In particular, we prove that all partial cubes of VC-dimension 2
can be extended to ample aka lopsided partial cubes of VC-dimension 2, yielding that the set families
defined by such graphs satisfy the sample compression conjecture by Littlestone and Warmuth (1986) in a strong sense.
The latter is a central conjecture of the area of computational machine learning, that is far from being solved even for general set systems of VC-dimension 2.
Moreover, we point out relations to tope graphs of COMs of low rank and region graphs of pseudoline arrangements.
\end{abstract}

\tableofcontents

\section{Introduction}
Set families are fundamental objects in combinatorics, algorithmics,
machine learning, discrete geometry, and combinatorial optimization. The Vapnik-Chervonenkis
dimension (the \emph{VC-dimension} for short) $\vcd(\SS)$  of a set family $\SS\subseteq 2^U$ is
the size of a largest subset of $X\subseteq U$ which can be \emph{shattered}
by $\SS$ \cite{VaCh}, i.e., $2^{X}=\{X\cap S: S\in\mathcal{S}\}$.
Introduced in
statistical learning by  Vapnik and Chervonenkis \cite{VaCh}, the VC-dimension was adopted in the above areas
as complexity measure and as a combinatorial dimension of $\SS$.
Two important inequalities relate a set family $\SS\subseteq 2^{U}$ with its VC-dimension. The first one, the
\emph{Sauer-Shelah lemma}  \cite{Sauer,Shelah} establishes that if $|U|=m$, then the number of sets in a set family
$\SS\subseteq 2^{U}$ with VC-dimension  $d$ is upper bounded by  $\binom{m}{\leq d}$. The second stronger
inequality, called the \emph{sandwich lemma}, proves that $|\SS|$ is sandwiched between the number
of \emph{strongly shattered} sets (i.e., sets $X$ such that $\SS$ contains an $X$-cube, see Section \ref{OM-COM-AMP})
and the number of shattered sets \cite{AnRoSa,BoRa,Dr,Pa}. The set families
for which the Sauer-Shelah bounds are tight are called {\it maximum families}  \cite{GaWe,FlWa} and the set families
for which the upper bounds in the sandwich lemma are tight are called {\it ample, lopsided, and extremal families}
\cite{BaChDrKo,BoRa,La}. Every  maximum  family  is  ample,  but  not  vice versa.

To take a graph-theoretical point of view on set families, one considers the subgraph
$G(\SS)$ of the hypercube $Q_m$ induced by the subsets of $\SS\subseteq 2^{U}$. (Sometimes $G(\SS)$
is called the {\it 1-inclusion graph} of $\SS$ \cite{Hau,HaLiWa}.) Each edge of $G(\SS)$ corresponds to
an element of $U$. Then analogously to edge-contraction and minors in graph theory, one can
consider the operation of simultaneous contraction of all edges of $G(\SS)$ defined by the same element $e\in U$. The resulting
graph is the 1-inclusion graph $G(\SS_e)$ of the set family $\SS_e\subseteq 2^{U\setminus \{ e\}}$ obtained by identifying
all pairs of sets of $\SS$ differing only in $e$. Given $Y\subseteq U$, we call the set family $\SS_Y$ and its 1-inclusion
graph $G(\SS_Y)$ obtained from $\SS$ and $G(\SS)$ by successively contracting the edges labeled by the elements of $Y$
the {\it Q-minors} of $\SS$ and $G(\SS)$. Then $X\subseteq U$ is shattered by $\SS$ if and only if the Q-minor
$G(\SS_{U\setminus X})$ is a full cube. Thus, the cubes play the same role for Q-minors as the complete graphs for classical
graph minors.

To take a metric point of view on set families, one restricts to set families whose 1-inclusion graph satisfies
further properties. The typical property here is that the 1-inclusion graph $G(\SS)$ of $\SS$ is an isometric (distance-preserving)
subgraph  of the hypercube $Q_m$. Such graphs 
are called {\it partial cubes}. Partial cubes
can be characterized in a pretty and efficient way \cite{Dj} and can be recognized in quadratic time \cite{Epp}.
Partial cubes comprise many important and complex graph classes occurring in metric graph theory and
initially arising in completely different areas of research such as geometric group theory, combinatorics, discrete geometry,
and media theory (for a comprehensive presentation of partial cubes and their classes, see the survey \cite{BaCh_survey} and
the books \cite{DeLa,HaImKl,Ov1}). For example, 1-inclusion graphs of ample families (and thus of maximum families) are partial
cubes \cite{BaChDrKo,La} (in view of this, we will call
such graphs {\it ample partial cubes} and {\it maximum partial cubes}, respectively).
Other important examples comprise median graphs (aka 1-skeletons of CAT(0) cube
complexes \cite{Ch_CAT,Sa}) and, more generally, 1-skeletons of CAT(0) Coxeter zonotopal complexes~\cite{HaPa},
the tope graphs of oriented matroids (OMs)~\cite{BjLVStWhZi}, of affine oriented matroids (AOMs)~\cite{KnMa}, and of lopsided
sets (LOPs)~\cite{KnMa,La}, where the latter coincide with ample partial cubes (AMPs).
More generally, tope graphs of complexes of oriented matroids (COMs)~\cite{BaChKn,KnMa} capture all of the above.
Other classes of graphs defined by distance or convexity properties turn
out to be partial cubes:   bipartite cellular graphs (aka bipartite graphs with totally decomposable metrics)~\cite{BaCh_cellular},
bipartite Pasch \cite{Ch_thesis,Ch_separation} and bipartite Peano \cite{Po_Peano}
graphs, netlike graphs~\cite{Po1}, and hypercellular graphs \cite{ChKnMa}.

Many mentioned classes of partial cubes can be characterized via forbidden $Q$-minors; in case of partial cubes, $Q$-minors
are endowed with a second operation called \emph{restriction} and are
called {\it partial cube minors}, or {\it pc-minors} \cite{ChKnMa}. The class of partial cubes is closed under pc-minors.
Thus, given a set $G_1,G_2,\ldots,G_n$ of partial cubes, one considers the set ${\mathcal F}(G_1,\ldots,G_n)$ of all partial cubes not having any
of $G_1,G_2,\ldots,G_n$ as a pc-minor. Then  $\mathcal{F}(Q_2)$ is the class of  trees, $\mathcal{F}(P_3)$ is the class
of hypercubes, and $\mathcal{F}(K_2\square P_3)$ consists of bipartite cacti~\cite[page 12]{Ma}. Other obstructions lead to
more interesting classes, e.g., almost-median graphs ($\mathcal{F}(C_6)$~\cite[Theorem 4.4.4]{Ma}),
hypercellular graphs ($\mathcal{F}(Q_3^-)$~\cite{ChKnMa}), median graphs ($\mathcal{F}(Q_3^-, C_6)$~\cite{ChKnMa}),
bipartite cellular graphs ($\mathcal{F}(Q_3^-, Q_3)$~\cite{ChKnMa}), 
rank two COMs ($\mathcal{F}(SK_4, Q_3)$~\cite{KnMa}), and two-dimensional ample graphs ($\mathcal{F}(C_6, Q_3)$~\cite{KnMa}).
Here $Q_3^-$ denotes the 3-cube $Q_3$ with one vertex removed and $SK_4$ the full subdivision of $K_4$, see Figure~\ref{fig:COMobstructions}. Bipartite Pasch graphs have been
characterized in \cite{Ch_thesis,Ch_separation} as partial cubes excluding 7 isometric subgraphs of $Q_4$ as  pc-minors.

Littlestone and Warmuth~\cite{LiWa} introduced the sample compression technique for deriving generalization bounds in machine learning.
Floyd and Warmuth~\cite{FlWa} asked whether any set family $\SS$ of VC-dimension $d$ has a sample compression scheme of size~$O(d)$.
This question remains one of the oldest open problems in computational machine learning. It was recently shown in \cite{MoYe} that
labeled compression schemes of size $O(2^d)$ exist. Moran and Warmuth  \cite{MoWa} designed labeled  compression schemes of size $d$ for ample families.
Chalopin et al. \cite{ChChMoWa} designed (stronger) unlabeled compression schemes of size $d$ for maximum families and characterized such schemes for
ample families via unique sink orientations of their 1-inclusion graphs. For ample families of VC-dimension 2 such unlabeled
compression schemes exist because they admit corner peelings \cite{ChChMoWa,MeRo}. In view of this, it was noticed in \cite{RuRuBa}
and \cite{MoWa} that  the original sample compression conjecture of \cite{FlWa} would be solved if {\it  one can show that any set family $\SS$
of VC-dimension $d$ can be extended to an ample (or maximum) partial cube of VC-dimension $O(d)$ or can be covered by $exp(d)$ ample
partial cubes of VC-dimension $O(d)$.}
These questions are already nontrivial for set families of VC-dimension 2.

In this paper, we investigate the first question for partial cubes of VC-dimension 2, i.e., the class $\mathcal{F}(Q_3)$, that we will simply call {\it two-dimensional partial cubes}.
We show that two-dimensional partial cubes can be extended to ample partial cubes of VC-dimension 2 -- a property that is not shared by general set families of VC-dimension 2.
In relation to this result,
we establish that all two-dimensional partial cubes can be obtained via amalgams from two types of combinatorial cells: maximal full subdivisions of complete graphs and
convex cycles not included in such subdivisions. We show that all such cells are gated subgraphs.
On the way, we detect a variety of other structural
properties of two-dimensional partial cubes. Since two-dimensional partial cubes are very natural from the point of view of pc-minors and generalize previously studied classes such as bipartite cellular graphs~\cite{ChKnMa}, we consider these results of independent interest also from this point of view. In particular, we point out relations to tope graphs of COMs of low rank and region graphs of pseudoline arrangements. See Theorem~\ref{characterization} for a full statement of our results on two-dimensional partial cubes. Figure \ref{fig:two-dim-pc} presents an example of a two-dimensional partial cube which we further use as a running example. We also provide two characterizations of partial cubes of VC-dimension $\le d$ for any $d$ (i.e., of the class ${\mathcal F}(Q_{d+1})$) via hyperplanes
and isometric expansions. However, \emph{understanding the structure of graphs from
${\mathcal F}(Q_{d+1})$ with $d\ge 3$ 
remains a challenging open question.}

%

\begin{figure}[htb]
\centering
\includegraphics[width=.35\textwidth]{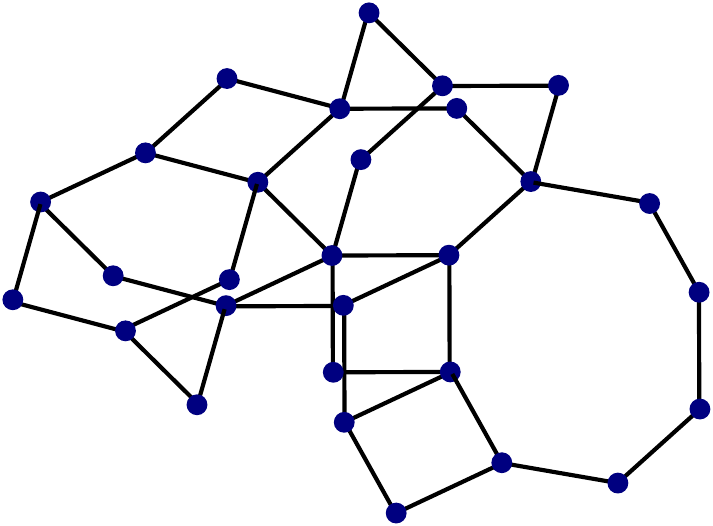}
\caption{A two-dimensional partial cube $M$}
\label{fig:two-dim-pc}
\end{figure}

\section{Preliminaries}


\subsection{Metric subgraphs and partial cubes}
All graphs $G=(V,E)$ in this paper  are finite, connected,
and simple. 
The {\it distance} $d(u,v):=d_G(u,v)$ between two vertices $u$ and $v$ is the length
of a shortest $(u,v)$-path, and the {\it interval} $I(u,v)$
between $u$ and $v$ consists of all vertices on shortest $(u,v)$-paths:
$I(u,v):=\{ x\in V: d(u,x)+d(x,v)=d(u,v)\}.$
An induced subgraph $H$ of $G$ is {\it
isometric} if the distance between any pair of vertices in $H$ is
the same as that in $G.$
An induced subgraph of $G$ (or the corresponding vertex set $A$)
is called {\it convex} if it includes the interval of $G$ between
any two of its vertices.  Since the intersection of convex subgraphs
is convex, for every subset $S\subseteq V$ there exists the
smallest convex set $\conv(S)$ containing
$S$, referred to as the {\it convex hull} of $S$.
A subset $S\subseteq V$ or the
subgraph $H$ of $G$ induced by $S$ is called {\it gated} (in $G$)~\cite{DrSch}
if for every vertex $x$ outside $H$ there exists a vertex $x'$
(the {\it gate} of $x$) in $H$ such that each vertex $y$ of $H$ is
connected with $x$ by a shortest path passing through the gate
$x'$. It is easy to see that if $x$ has a gate in $H$, then it is unique
and that gated sets are convex. 
Since the intersection of gated subgraphs
is gated, for every subset $S\subseteq V$ there exists the
smallest gated set $\gate( S)$ containing
$S,$ referred to as the {\it gated hull} of $S$.

A graph $G=(V,E)$ is
{\it isometrically embeddable} into a graph $H=(W,F)$ if there
exists a mapping $\varphi : V\rightarrow W$ such that $d_H(\varphi
(u),\varphi (v))=d_G(u,v)$ for all vertices $u,v\in V$,
 i.e., $\varphi(G)$ is  an isometric subgraph of $H$. A graph $G$ is called a {\it partial cube} if it admits an isometric embedding into some hypercube
$Q_m$. 
For an edge $e=uv$ of $G$, let $W(u,v)=\{ x\in V: d(x,u)<d(x,v)\}$.  
For an edge $uv$, the sets $W(u,v)$ and $W(v,u)$ are called {\it complementary halfspaces} of $G$.

\begin{theorem} \cite{Dj} \label{Djokovic}  A graph $G$ is a partial cube if and only if $G$ is bipartite and for any edge $e=uv$
the sets $W(u,v)$ and $W(v,u)$ are convex.
\end{theorem}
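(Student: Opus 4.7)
The plan splits naturally into the easy direction and the substantial converse. For the forward direction, suppose $G$ embeds isometrically into $Q_m$ via some map $\varphi$. Then $G$ inherits bipartiteness from $Q_m$. Any edge $e=uv$ of $G$ corresponds under $\varphi$ to flipping some coordinate $i$, and one verifies that $W(u,v)$ coincides with the preimage of the coordinate halfspace $\{x\in Q_m : x_i = \varphi(u)_i\}$. Since this coordinate halfspace is convex in $Q_m$ and $\varphi$ is isometric, any interval in $G$ between vertices of $W(u,v)$ maps to an interval in $Q_m$ that stays inside the halfspace, so $W(u,v)$ is convex in $G$, and likewise for $W(v,u)$.

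For the converse, the plan is to build an isometric embedding into $Q_m$ with one coordinate per equivalence class of the Djokovi\'c--Winkler relation. For each edge $e=uv$, define
\[
F_e := \{xy\in E : x\in W(u,v),\ y\in W(v,u)\}.
\]
The first key step is to show, using convexity of both halfspaces together with bipartiteness, that $F_e$ is a minimal edge-cut of $G$ with sides $W(u,v)$ and $W(v,u)$, and that every shortest path between a vertex of $W(u,v)$ and a vertex of $W(v,u)$ uses exactly one edge of $F_e$. Declare $e \, \Theta \, f$ iff $f \in F_e$. Reflexivity is immediate; for symmetry, if $f=xy \in F_e$ one checks with a short case analysis (using bipartiteness to rule out equal distances across the edge $xy$) that $u\in W(x,y)$ and $v\in W(y,x)$. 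Transitivity follows from the stronger statement $F_e=F_f$ whenever $e\,\Theta\,f$: the two cuts must coincide because $W(u,v)$ and $W(x,y)$ are forced to be the same halfspace on one side of the common cut, again by convexity.

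Let $\Theta_1,\ldots,\Theta_m$ be the equivalence classes and fix a basepoint $v_0\in V$. Define $\varphi : V \to \{0,1\}^m$ by letting $\varphi(v)_i = 1$ iff $v$ lies on the opposite side of the cut $\Theta_i$ from $v_0$; this is well-defined by the cut property. To see $\varphi$ is isometric, one verifies that for any $u,v\in V$ the number of $\Theta$-classes separating them equals $d_G(u,v)$: any $(u,v)$-path must cross each separating class at least once, and a shortest path cannot cross any class twice, since by the cut property a second crossing would force the path to leave a halfspace and return, contradicting the convexity of that halfspace. The main obstacle is exactly this last point together with the coincidence $F_e=F_f$ for $\Theta$-equivalent edges: both hinge on the convexity hypothesis, which is precisely what makes the statement tight. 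Once these are in hand, injectivity of $\varphi$ and the isometry equation $d_{Q_m}(\varphi(u),\varphi(v))=d_G(u,v)$ are routine.
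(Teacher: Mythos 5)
The paper states this as Djokovi\'c's classical characterization and cites \cite{Dj} without reproducing a proof, so there is no internal proof to compare against; your argument is the standard one and is correct in outline (forward direction via preimages of coordinate halfspaces, converse via the cut structure of the relation $\Theta$). In particular, your key steps --- proving $F_e=F_f$ for $\Theta$-related edges using convexity of the halfspaces, and then embedding by recording for each $\Theta$-class the side containing a vertex relative to a basepoint --- are exactly the construction the paper itself describes right after the theorem statement via $\varphi(v):=\{ i: v\in G^+_i\}$.
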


To establish an isometric embedding of $G$ into a hypercube,  Djokovi\'{c}~\cite{Dj}
introduced the following binary relation $\Theta$ (called \emph{Djokovi\'{c}-Winkler relation}) on the edges of $G$:  for two edges $e=uv$ and $e'=u'v'$ we set $e\Theta e'$ if and only if
$u'\in W(u,v)$ and $v'\in W(v,u)$. Under the conditions of the theorem, $e\Theta e'$ if and only if
$W(u,v)=W(u',v')$ and $W(v,u)=W(v',u')$, i.e. $\Theta$ is an equivalence relation. Let $E_1,\ldots,E_m$ be the equivalence classes
of $\Theta$ and let $b$ be an arbitrary vertex taken as the basepoint of $G$.
For a $\Theta$-class $E_i$,
let $\{ G^-_i,G^+_i\}$ be the pair of complementary
convex halfspaces  of $G$ defined by setting $G^-_i:=G(W(u,v))$ and $G^+_i:=G(W(v,u))$ for an arbitrary edge $uv\in E_i$ such that $b\in G^-_i$.
Then the isometric embedding $\varphi$ of $G$ into the $m$-dimensional hypercube $Q_m$ is obtained by setting
$\varphi(v):=\{ i: v\in G^+_i\}$ for any vertex $v\in V$.  Then $\varphi(b)=\varnothing$ and for any two vertices $u,v$ of $G$, $d_G(u,v)=|\varphi (u)\Delta \varphi(v)|.$

The bipartitions $\{ G^-_i,G^+_i\}, i=1,\ldots,m,$ can be canonically defined for all subgraphs $G$ of the hypercube $Q_m$, not only for partial cubes. Namely,
if $E_i$ is a class of parallel edges of $Q_m$, then removing the edges of $E_i$ from $Q_m$ but leaving their end-vertices, $Q_m$ will be divided into two $(m-1)$-cubes $Q'$ and $Q''$.
Then $G^-_i$ and $G^+_i$ are the intersections of $G$ with $Q'$ and $Q''$.

For a $\Theta$-class $E_i$, 
the {\it boundary} $\partial G^-_i$ of the halfspace $G^-_i$ consists of all vertices of $G^-_i$ having a neighbor in $G^+_i$
($\partial G^+_i$ is defined analogously). Note that $\partial G^-_i$ and $\partial G^+_i$ induce isomorphic subgraphs (but not
necessarily isometric) of $G$. Figure \ref{fig:M-Theta-class}(a) illustrates a $\Theta$-class $E_i$ of the two-dimensional partial cube $M$,
the halfspaces $M_i^-, M_i^+$ and their boundaries $\partial M_i^-,\partial M_i^+$.

\begin{figure}[htb]
\centering
\includegraphics[width=0.80\textwidth]{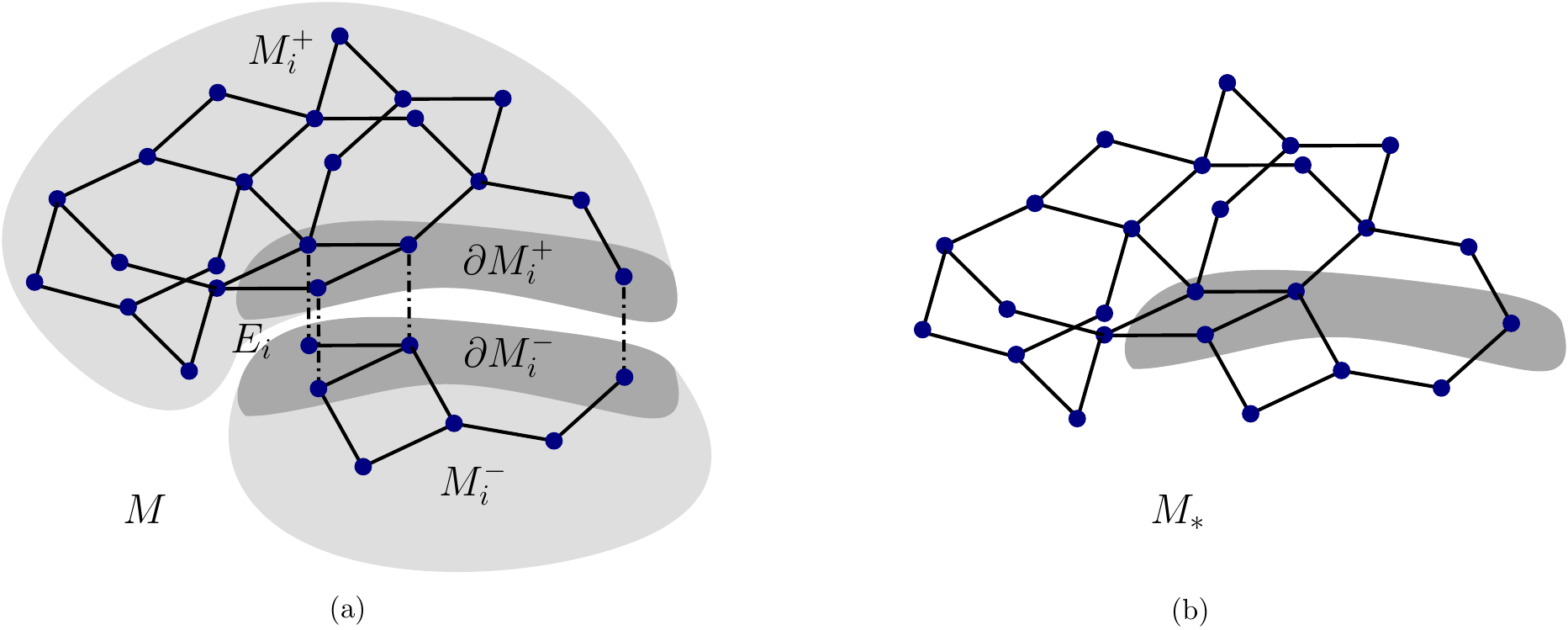}
\caption{(a) The halfspaces and their boundaries defined by a $\Theta$-class $E_i$ of $M$. (b) The two-dimensional partial cube $M_*=\pi_i(M)$ obtained from $M$ by contracting $E_i$.}
\label{fig:M-Theta-class}
\end{figure}

An \emph{antipode} of a vertex $v$ in a partial cube $G$ is a vertex $-v$ such that $G=\conv(v,-v)$. Note that in partial cubes the antipode is unique and $\conv(v,-v)$
coincides with the interval $I(v,-v)$. A partial cube $G$ is \emph{antipodal} if all its vertices have antipodes. A partial cube $G$ is said to be \emph{affine} if there
is an antipodal partial cube $G'$, such that $G$ is a halfspace of $G'$.

\subsection{Partial cube minors}\label{minors}
Let $G$ be a partial cube, isometrically embedded in the hypercube $Q_m$.
 For a $\Theta$-class $E_i$ of $G$,  an {\it elementary restriction} consists of taking one of the
 complementary halfspaces $G^-_i$ and $G^+_i$. 
More generally, a {\it restriction} is a subgraph of $G$ induced by the
intersection of a set of (non-complementary) halfspaces of $G$. Such an intersection is a convex subgraph of $G$, thus a partial cube. Since any convex subgraph of a partial cube $G$ is the intersection of halfspaces \cite{AlKn,Ch_thesis}, the restrictions of $G$ coincide with the convex subgraphs of $G$.

For a $\Theta$-class $E_i$, we say that the graph  $\pi_i(G)$ obtained from
$G$ by contracting the edges of $E_i$ is an ($i$-){\it contraction} of $G$; for an illustration, see Figure \ref{fig:M-Theta-class}(b). For a vertex $v$ of $G$, we will denote by $\pi_i(v)$ the image of $v$ under the $i$-contraction, i.e., if $uv$ is an
edge of $E_i$, then $\pi_i(u)=\pi_i(v)$, otherwise $\pi_i(u)\ne \pi_i(v)$. We will apply $\pi_i$ to subsets $S\subset V$, by setting $\pi_i(S):=\{\pi_i(v): v\in S\}$. In particular we denote the $i$-{\it contraction} of $G$ by $\pi_i(G)$.
From the proof of the first part of ~\cite[Theorem 3]{Ch_hamming} it easily follows that $\pi_i(G)$ is an isometric subgraph of $Q_{m-1}$, thus the class of partial cubes is closed under contractions.
Since edge contractions in graphs commute, if $E_i,E_j$ are two distinct $\Theta$-classes, then $\pi_j(\pi_i(G))=\pi_i(\pi_j(G))$. Consequently, for a set $A$ of $k$ $\Theta$-classes, we can denote
by $\pi_A(G)$ the isometric subgraph
of $Q_{m-k}$ obtained from $G$ by contracting the equivalence classes of edges from $A$.

Contractions and restrictions commute in partial cubes \cite{ChKnMa}. Consequently,  any set of restrictions and any set of contractions of a
partial cube $G$ provide the same result, independently of the order in which we perform them. 
The resulting graph $G'$ is  a partial cube, and $G'$ is called a {\it partial cube minor} (or {\it pc-minor}) of $G$. For a partial cube $H$ we denote by ${\mathcal F}(H)$ the class of all partial cubes
not having $H$ as a pc-minor. In this paper we investigate the class ${\mathcal F}(Q_3)$.

\begin{figure}[htb]
\centering
\includegraphics[width=0.90\textwidth]{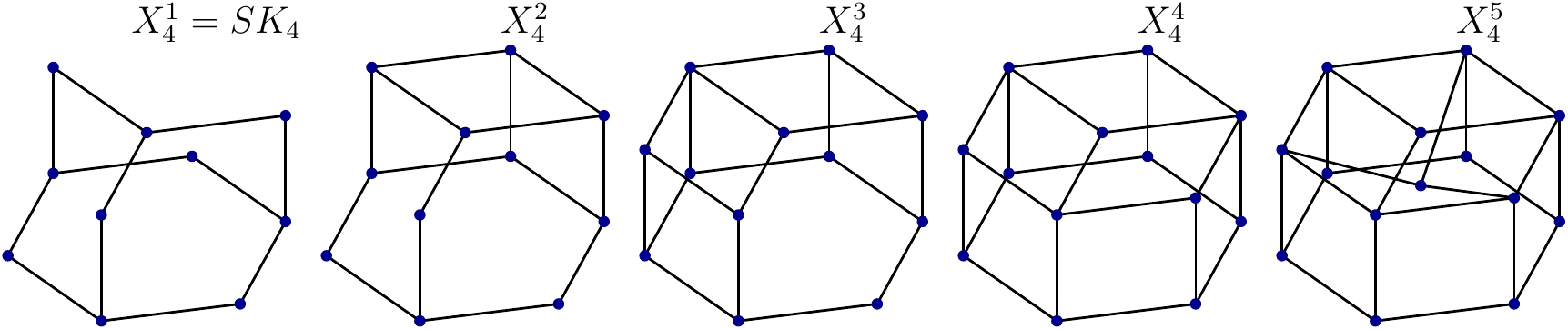}
\caption{The excluded pc-minors of isometric dimension $\le 4$ for COMs.}
\label{fig:COMobstructions}
\end{figure}

With the observation that a convex subcube of a partial cube can be obtained by contractions as well, the proof of the following lemma is straightforward.

\begin{lemma} \label{VCdim_d} A partial cube $G$ belongs to ${\mathcal F}(Q_{d+1})$ if and only if $G$ has VC-dimension $\le d$.
\end{lemma}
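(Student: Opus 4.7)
The plan is to prove the two directions of the equivalence using the correspondence between pc-minor operations on $G$ and natural operations on the set family $\SS$ whose $1$-inclusion graph is $G$.

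For the $(\Leftarrow)$ direction, I would argue the contrapositive: if $Q_{d+1}$ is a pc-minor of $G$, then $\vcd(\SS) \ge d+1$. The key step is to show that both pc-minor operations are monotone with respect to VC-dimension. For restriction to a halfspace, the associated family becomes a subfamily of $\SS$, so the VC-dimension obviously does not increase. For contraction along a $\Theta$-class $E_e$, the family $\SS_e$ is obtained by identifying sets that differ only in coordinate $e$; I would verify that if $X \subseteq U \setminus \{e\}$ is shattered by $\SS_e$, then choosing an arbitrary preimage in $\SS$ for each of the $2^{|X|}$ witnesses shows that $X$ is already shattered by $\SS$ (the preimage's coordinate at $e$ is irrelevant since $e \notin X$). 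Invoking the commutativity of contractions and restrictions recorded in the excerpt, one then concludes that every pc-minor of $G$ has VC-dimension at most $\vcd(\SS)$. Since $Q_{d+1}$ corresponds to the full power set of a $(d+1)$-element set, which has VC-dimension exactly $d+1$, this direction follows.

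For the $(\Rightarrow)$ direction, the plan is to fix a shattered subset $X \subseteq U$ with $|X| = d+1$ and then appeal directly to the characterization already recalled in the introduction: $X$ is shattered by $\SS$ if and only if the Q-minor $G(\SS_{U \setminus X})$ is a full cube, hence isomorphic to $Q_{d+1}$. Since Q-minors are obtained from $G$ purely by contractions and contractions are a special case of pc-minor operations (no restrictions needed), $Q_{d+1}$ appears as a pc-minor of $G$.

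The only step requiring genuine (though mild) care is verifying the monotonicity of VC-dimension under contraction; this is handled by the coordinate-by-coordinate preimage construction outside the shattered set. Everything else reduces to the stated correspondence between shattering and Q-minors and to the commutativity of pc-minor operations, so no significant obstacle is anticipated.
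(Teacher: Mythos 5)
Your proposal is correct and matches the route the paper has in mind: the paper leaves the lemma as ``straightforward'' given the observation that a convex subcube of a partial cube can be obtained by contractions alone, which together with the shattering/Q-minor correspondence yields both directions. Your only (harmless) deviation is that you handle restrictions by noting a halfspace induces a subfamily of $\SS$ and hence cannot increase the VC-dimension, rather than reducing the restriction step to contractions; both arguments are routine and complete.
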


Let $G$ be a partial cube and $E_i$ be a $\Theta$-class of $G$. Then  $E_i$ {\it crosses} a convex subgraph
$H$ of $G$ if $H$ contains an edge $uv$ of $E_i$ and $E_i$ {\it osculates} $H$ if $E_i$ does not cross $H$ and there exists
an edge $uv$ of $E_i$ with $u\in H$ and $v\notin H$. Otherwise, $E_i$ is {\it disjoint} from $H$. The following results summarize
the properties of contractions of partial cubes established in \cite{ChKnMa} and \cite{KnMa}:

\begin{lemma} \label{contraction-ChKnMa} Let $G$ be a partial cube and $E_i$ be a $\Theta$-class of $G$.

\begin{itemize}
\item[(i)] \cite[Lemma 5]{ChKnMa} If $H$ is a convex subgraph of $G$ and $E_i$ crosses or is disjoint from $H$,
then $\pi_i(H)$ is also a convex subgraph of $\pi_i(G)$;
\item[(ii)] \cite[Lemma 7]{ChKnMa} If $S$ is a subset of vertices of $G$, then $\pi_i(\conv(S))\subseteq \conv(\pi_i(S))$.
If $E_i$ crosses $S$, then $\pi_i(\conv(S))= \conv(\pi_i(S))$;
\item[(iii)] \cite[Lemma 10]{ChKnMa} If $S$ is a gated subgraph of $G$, then $\pi_i(S)$ is a  gated subgraph of $\pi_i(G)$.
\end{itemize}
\end{lemma}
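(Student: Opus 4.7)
The common ingredient for all three parts is the following distance formula: for all $u, v \in V(G)$, $d_{\pi_i(G)}(\pi_i(u), \pi_i(v))$ equals $d_G(u, v)$ if $u$ and $v$ lie in the same halfspace of $E_i$, and equals $d_G(u, v) - 1$ otherwise. This holds because any shortest path of a partial cube crosses each $\Theta$-class at most once. In particular, $\pi_i$ is $1$-Lipschitz and satisfies $\pi_i(I_G(u, v)) \subseteq I_{\pi_i(G)}(\pi_i(u), \pi_i(v))$ for all $u, v \in V(G)$.

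For (i), I would use the fact that every convex subgraph of a partial cube is an intersection of halfspaces. Writing $H = \bigcap_{\ell \in L} G_\ell^{\varepsilon_\ell}$, the goal is to show that $\pi_i(H) = \bigcap_{\ell \in L,\, \ell \neq i} \pi_i(G_\ell^{\varepsilon_\ell})$, which displays $\pi_i(H)$ as an intersection of halfspaces in $\pi_i(G)$ and hence as a convex subgraph. When $E_i$ is disjoint from $H$, the restriction of $\pi_i$ to $H$ is injective and distance-preserving, so the description is straightforward. When $E_i$ crosses $H$, the key observation is that if a vertex $v \in H$ is incident to an $E_i$-edge $vv'$, then picking any $u \in H$ on the opposite side of $E_i$ (which exists by the crossing hypothesis) yields $v' \in I_G(v, u) \subseteq H$ by convexity; thus every $E_i$-edge touching $H$ lies inside $H$, which ensures that identifications under $\pi_i$ respect the halfspace description.

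For (ii), the containment $\pi_i(\conv(S)) \subseteq \conv(\pi_i(S))$ follows by iterating the interval inclusion through the construction of the convex hull as the closure of $S$ under taking intervals. For the equality when $E_i$ crosses $S$, note that $S$ having vertices on both sides of $E_i$ implies $\conv(S)$ has vertices on both sides; since $\conv(S)$ is convex and connected, it must contain an $E_i$-edge, so $E_i$ crosses $\conv(S)$. By (i), $\pi_i(\conv(S))$ is convex and contains $\pi_i(S)$, and hence contains $\conv(\pi_i(S))$, yielding equality.

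For (iii), given $\bar{x} \in V(\pi_i(G))$, I pick a lift $x \in V(G)$ of $\bar{x}$ (on the same side of $E_i$ as $S$ whenever possible) and set $y = \gate_S(x)$. The claim is that $\pi_i(y)$ is the gate of $\bar{x}$ in $\pi_i(S)$. For any $\bar{s} \in \pi_i(S)$ with a lift $s \in S$, the gate identity $d_G(x, s) = d_G(x, y) + d_G(y, s)$ in $G$, combined with the distance formula of the first paragraph, must yield $d_{\pi_i(G)}(\bar{x}, \bar{s}) = d_{\pi_i(G)}(\bar{x}, \pi_i(y)) + d_{\pi_i(G)}(\pi_i(y), \bar{s})$. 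The main obstacle is the osculating case, where $S$ lies in one halfspace of $E_i$ but has $E_i$-neighbors outside $S$: then the lift $x$ may be forced to lie in the halfspace opposite to $S$, and one must verify that the reduction by $1$ on $d_G(x, s)$ matches the reduction on $d_G(x, y)$ while $d_G(y, s)$ is unchanged. This alignment holds because $y, s \in S$ share the same halfspace of $E_i$ while $x$ lies in the other, so the two $-1$ corrections cancel and the gate equation descends to $\pi_i(G)$.
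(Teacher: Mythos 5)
Your proposal is correct in substance, but note that the paper itself contains no proof of this lemma: it is stated purely as a citation of Lemmas 5, 7 and 10 of~\cite{ChKnMa}, so the only meaningful comparison is with that reference, whose proofs proceed along essentially the same lines as yours (the coordinate/distance formula $d_{\pi_i(G)}(\pi_i(u),\pi_i(v))=d_G(u,v)-\sigma$, where $\sigma\in\{0,1\}$ records whether $E_i$ separates $u$ from $v$, is exactly the right engine, and your reduction of (ii) to (i) via the observation that a connected convex set meeting both halfspaces contains an $E_i$-edge is clean and correct).

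Two steps deserve more care than you give them. First, in the disjoint case of (i), injectivity and isometry of $\pi_i$ restricted to $H$ do not by themselves yield convexity of $\pi_i(H)$; what is needed is that the intersection $K:=\bigcap_{\ell\in L,\,\ell\neq i}G_\ell^{\varepsilon_\ell}$ obtained by dropping the $i$-constraint equals $H$. This is true but requires an argument: $K$ is convex, hence connected, and contains $H\subseteq G_i^-$; if $K$ met $G_i^+$, a geodesic inside $K$ from $H$ to $K\cap G_i^+$ would contain an $E_i$-edge with an endpoint in $K\cap G_i^-=H$, contradicting that $E_i$ is disjoint from $H$. Second, in (iii) your case analysis singles out the osculating configuration, but the uniform (and complete) justification is the following: since $y=\gate_S(x)$, the concatenation of geodesics from $x$ to $y$ and from $y$ to $s$ is a geodesic of $G$, and a geodesic in a partial cube crosses each $\Theta$-class at most once; hence it is impossible that $E_i$ separates $x$ from $y$ and simultaneously $y$ from $s$, which is the only sign pattern under which the three $\sigma$-corrections fail to satisfy $\sigma(x,s)=\sigma(x,y)+\sigma(y,s)$. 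This covers at once the crossing, osculating and disjoint positions of $E_i$ relative to $S$, shows that any lift $x$ of $\bar x$ works (so your parenthetical choice of lift is unnecessary), and makes the gate identity descend to $\pi_i(G)$ exactly as you assert. With these two points filled in, your sketch is a complete and correct proof.
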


\begin{lemma}\cite{KnMa}\label{antipodal-contraction}
 Affine and antipodal partial cubes are closed under contractions.
\end{lemma}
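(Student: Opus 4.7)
The plan is to handle the antipodal case first and then derive the affine case from it.

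For antipodality, let $G$ be antipodal, let $E_i$ be a $\Theta$-class of $G$, and let $v\in V(G)$ with antipode $-v$. I would show that $\pi_i(v)$ and $\pi_i(-v)$ are antipodal in $\pi_i(G)$. Since $G=I(v,-v)$, the number of $\Theta$-classes of $G$ equals $d_G(v,-v)$, so every $\Theta$-class of $G$, in particular $E_i$, must separate $v$ from $-v$; hence $\pi_i(v)\neq\pi_i(-v)$ and $d_{\pi_i(G)}(\pi_i(v),\pi_i(-v))=d_G(v,-v)-1$. For an arbitrary $w\in V(G)$, the vertex $w$ lies on the same side of $E_i$ as exactly one of $v,-v$, say as $v$. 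By convexity of the halfspace $G_i^-$, some shortest $(v,w)$-path avoids $E_i$, so $d_{\pi_i(G)}(\pi_i(v),\pi_i(w))=d_G(v,w)$, while $d_{\pi_i(G)}(\pi_i(w),\pi_i(-v))\leq d_G(w,-v)-1$. Combining with $w\in I(v,-v)$ gives
\[
d_{\pi_i(G)}(\pi_i(v),\pi_i(w))+d_{\pi_i(G)}(\pi_i(w),\pi_i(-v))\leq d_G(v,-v)-1=d_{\pi_i(G)}(\pi_i(v),\pi_i(-v)),
\]
so $\pi_i(w)\in I(\pi_i(v),\pi_i(-v))$. Since $\pi_i$ is surjective on vertices, $\pi_i(G)=I(\pi_i(v),\pi_i(-v))$, which gives antipodality.

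For the affine case, let $G=(G'')_j^-$ be a halfspace of an antipodal partial cube $G''$, cut out by a $\Theta$-class $E_j$ of $G''$, and let $E_i$ be a $\Theta$-class of $G$. Since $G$ is convex in $G''$, $E_i$ extends uniquely to a $\Theta$-class $E_{i'}$ of $G''$, and $E_{i'}\neq E_j$ because no edge of $E_j$ has both endpoints in $G$. By the antipodal case, $\pi_{i'}(G'')$ is antipodal. Since $E_j\neq E_{i'}$ and no edge of $E_{i'}$ joins $G$ to $(G'')_j^+$ (such an edge would belong to $E_j$), the image $\pi_{i'}(E_j)$ is a $\Theta$-class of $\pi_{i'}(G'')$ whose two complementary halfspaces are precisely $\pi_{i'}(G)$ and $\pi_{i'}((G'')_j^+)$; convexity of $\pi_{i'}(G)$ in $\pi_{i'}(G'')$ here uses Lemma~\ref{contraction-ChKnMa}(i). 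Finally, since $E_i=E_{i'}\cap E(G)$, the contraction $\pi_{i'}$ restricted to $G$ coincides with $\pi_i$, so $\pi_i(G)=\pi_{i'}(G)$ is a halfspace of the antipodal cube $\pi_{i'}(G'')$, and therefore affine.

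The main subtlety I anticipate is the bookkeeping in the affine case, in particular verifying that $E_i$ truly extends to a $\Theta$-class of $G''$ distinct from $E_j$ and that the image of $E_j$ remains a nontrivial $\Theta$-class after contraction which cuts out $\pi_{i'}(G)$ as a halfspace; both reduce to the fact that $G$ is convex in $G''$ and that $E_{i'}$ crosses $G$ while $E_j$ does not.
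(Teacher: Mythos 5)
Your proof is correct. Note that the paper itself gives no argument for this lemma: it is imported from \cite{KnMa} as a black box, so there is no internal proof to compare against; what you have written is a self-contained verification that the paper outsources. Your antipodal half is the natural direct argument: since $G=I(v,-v)$, every $\Theta$-class separates $v$ from $-v$ (otherwise the convex halfspace containing both would swallow all of $I(v,-v)=G$), and the distance bookkeeping under contraction then shows $\pi_i(G)=I(\pi_i(v),\pi_i(-v))=\conv(\pi_i(v),\pi_i(-v))$. Your affine half correctly reduces to the antipodal half by lifting $E_i$ to the class $E_{i'}$ of the ambient antipodal cube $G''$ (which works because $G$, being a halfspace, is convex and hence isometric in $G''$, so $E_{i'}\cap E(G)=E_i$ and $\pi_{i'}|_G=\pi_i$), observing $E_{i'}\neq E_j$, and checking that $\pi_{i'}(E_j)$ still cuts out $\pi_{i'}(G)$ as a halfspace. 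The only blemish is the parenthetical claim that $E_i$ separating $v$ from $-v$ forces $\pi_i(v)\neq\pi_i(-v)$: this fails precisely when $v$ and $-v$ are joined by an edge of $E_i$, i.e.\ when $G=K_2$, in which case $\pi_i(G)=K_1$ is trivially antipodal; the rest of your argument does not actually use the inequality, so nothing breaks.
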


\subsection{OMs, COMs, and AMPs}\label{OM-COM-AMP} In this subsection, we recall the definitions of oriented matroids, complexes of oriented matroids, and ample set families.

\subsubsection{OMs: oriented matroids}\label{sub:OM} Co-invented by Bland $\&$ Las Vergnas~\cite{BlLV} and Folkman $\&$ Lawrence~\cite{FoLa}, and further investigated
by many other authors,
oriented matroids  represent a unified combinatorial theory of orientations of (orientable) ordinary matroids. OMs capture the basic
properties of sign vectors representing the circuits in a directed
graph or more generally the regions in a central hyperplane arrangement in ${\mathbb R}^d$. OMs obtained from a hyperplane arrangement are called \emph{realizable}. Just as ordinary matroids, oriented matroids may be defined in a multitude of distinct but equivalent ways, see
the book by Bj\"orner et al.~\cite{BjLVStWhZi}. 

Let $U$ be a finite set and let $\covectors$ be a {\it system of sign vectors}, i.e.,
maps from $U$ to $\{\pm 1,0\} = \{-1,0,+1\}$. The elements of $\covectors$ are also referred to as \emph{covectors} and
denoted by capital letters $X, Y, Z$, etc.  
We denote by $\leq$  the product ordering
on $\{ \pm 1,0\}^{U} $ relative to the standard
ordering of signs with $0 \leq -1$ and $0 \leq +1$. 
The \emph{composition} of $X$ and $Y$ is the sign vector $X\circ Y$, where for all $e\in U$ one defines
$(X\circ Y)_e = X_e$ if $X_e\ne 0$  and  $(X\circ Y)_e=Y_e$  if $X_e=0$. The  \emph{topes} of $\covectors$
are the maximal elements of $\covectors$ with respect to $\leq$. A system of sign vectors  $(U,\covectors)$ is called an \emph{oriented matroid} (OM) if  $\covectors$ satisfies the following three axioms:
\begin{itemize}
\item [{\bf (C)}] ({\sf Composition)} $X\circ Y \in  \covectors$  for all $X,Y \in  \covectors$.
\item[{\bf (SE)}] ({\sf Strong elimination})   for each pair $X,Y\in\covectors$ and for each $e\in U$ such that $X_eY_e=-1$,  there exists $Z \in  \covectors$ such that
$Z_e=0$  and  $Z_f=(X\circ Y)_f$  for all $f\in U$ with $X_fY_f\ne -1$.
\item[{\bf (Sym)}] ({\sf Symmetry})  $-\covectors=\{ -X: X\in \covectors\}=\covectors,$ that is, $\covectors$ is closed under sign reversal.
\end{itemize}

Furthermore, a system of sign-vectors $(U,\covectors)$ is \emph{simple} if it has no ``redundant'' elements, i.e., for each $e \in U$,  $\{X_e: X\in \covectors\}=\{+, -,0 \}$ and for each pair $e\neq f$ in $U$,  there exist $X,Y \in \covectors$ with $\{X_eX_f,Y_eY_f\}=\{+, -\}$. 
From (C), (Sym), and (SE) it easily follows that
 the set ${\mathcal T}$ of topes of any simple OM $\covectors$ are  $\{-1,+1\}$-vectors. Therefore  ${\mathcal T}$ can be viewed as a set family (where $-1$ means
 that the  corresponding element does not belong to the set and $+1$ that it belongs). We will only consider simple OMs, without explicitly stating it every time. The \emph{tope graph} of an OM $\covectors$ is the 1-inclusion graph
 $G({\mathcal T})$ of $\mathcal T$ viewed as a set family.   The \emph{Topological Representation Theorem of Oriented Matroids} of \cite{FoLa} characterizes
 tope graphs of OMs as region graphs of
 pseudo-sphere arrangements in a sphere $S^d$~\cite{BjLVStWhZi}. See the bottom part of Figure~\ref{fig:AOM} for an arrangement of pseudo-circles in $S^2$. It is also well-known (see for example ~\cite{BjLVStWhZi}) that tope graphs of OMs are partial cubes and that $\covectors$
 can be recovered from its tope graph  $G({\mathcal T})$ (up to isomorphism). Therefore, we can define all terms in the language of tope graphs. In particular, the isometric dimension of $G({\mathcal T})$ is $|U|$ and its VC-dimension coincides with the dimension $d$ of the sphere $S^d$ hosting a representing pseudo-sphere arrangement.

 Moreover a graph $G$
 is the tope graph of an \emph{affine oriented matroid} (AOM) if $G$ is
 a halfspace of a tope graph of an OM. In particular, tope graphs of AOMs are partial cubes as well.

\subsubsection{COMs: complexes of oriented matroids} Complexes of oriented matroids (COMs) have been introduced and investigated in \cite{BaChKn} as a far-reaching natural common generalization of oriented matroids, affine oriented matroids,
and ample systems of sign-vectors (to be defined below). Some research has been connected to COMs quite quickly, see e.g.~\cite{Baum,Hochstattler,Margolis} and the tope graphs of COMs have been investigated in depth in \cite{KnMa}, see Subsection~\ref{sub:pc-Minors}. 
COMs are defined in a similar way as OMs, simply replacing
the global axiom (Sym) by a weaker local axiom (FS) of face symmetry:  a \emph{complex of oriented matroids (COMs)} is a system of sign vectors  $(U,\covectors)$ satisfying (SE), and the following axiom:
\begin{itemize}
\item[{\bf (FS)}] ({\sf Face symmetry}) $X\circ -Y \in  \covectors$  for all $X,Y \in  \covectors$.
\end{itemize}
As for OMs we generally restrict ourselves to \emph{simple} COMs, i.e., COMs defining simple systems of sign-vectors. 
It is easy to see that (FS) implies (C), yielding that OMs are exactly the COMs containing the zero sign vector
${\bf 0}$, see~\cite{BaChKn}. Also, AOMs are COMs, see~\cite{BaChKn} or~\cite{Baum}. In analogy with realizable OMs, a COM is \emph{realizable} if it is the  systems of sign vectors of the regions in an arrangement $U$ of (oriented) hyperplanes restricted to a convex set of ${\mathbb R}^d$. See Figure~\ref{fig:COMexample} for an example in ${\mathbb R}^2$. For other examples of COMs, see \cite{BaChKn}.

\begin{figure}[htb]
\centering
\includegraphics[width=\textwidth]{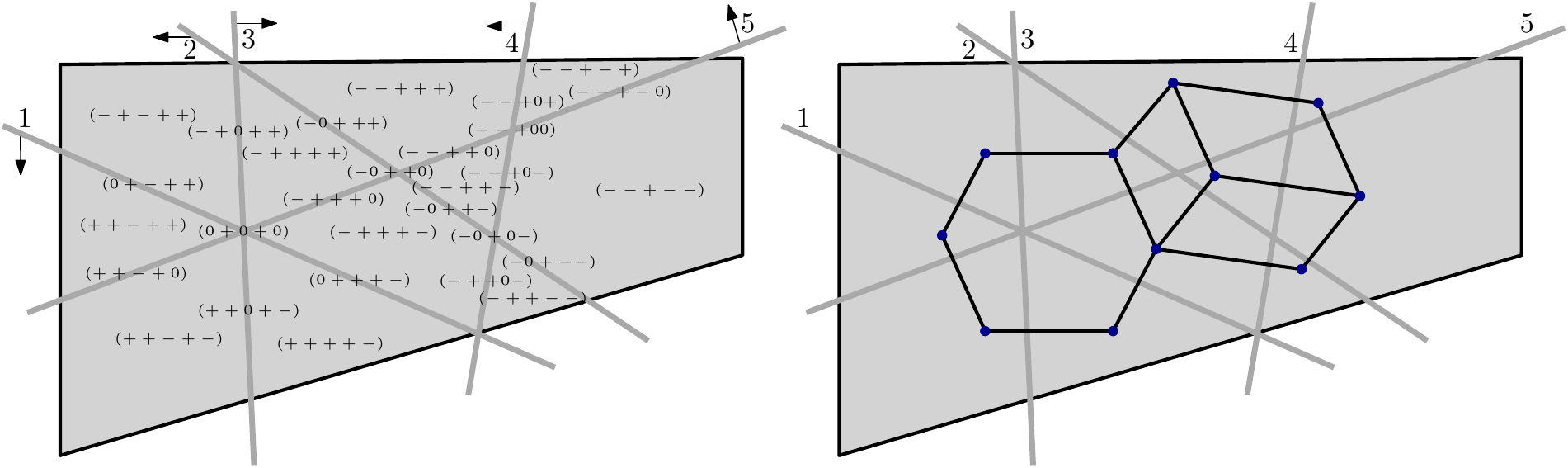}
\caption{The system of sign-vectors associated to an arrangement of hyperplanes restricted to a convex set and the tope graph of the resulting realizable COM.}
\label{fig:COMexample}
\end{figure}

The simple twist between (Sym) and (FS) leads to a rich combinatorial and geometric structure that is build from OM cells but is quite different from OMs. Let $(U,\covectors)$ be a COM and $X$
be a covector of $\covectors$.  The \emph{face} of $X$ is $F(X):=\{ X\circ Y: Y\in \covectors\}$. By \cite[Lemma 4]{BaChKn}, each face $F(X)$ of $\covectors$ is an OM. Moreover, it is shown in \cite[Section 11]{BaChKn}
that replacing each combinatorial face $F(X)$ of  $\covectors$  by a PL-ball, we obtain a contractible cell complex associated to each COM. The \emph{topes} and the \emph{tope graphs} of COMs are defined
in the same way as for OMs. Again, the topes ${\mathcal T}$ are $\{-1,+1\}$-vectors, the tope graph $G({\mathcal T})$ is a partial cubes, and the COM $\covectors$ can be recovered from its tope graph, see~\cite{BaChKn} or~\cite{KnMa}. As for OMs, the isometric dimension of $G({\mathcal T})$ is $|U|$. If a COM is realizable in $\mathbb{R}^d$, then the VC-dimension of $G({\mathcal T})$ is at most $d$.

For each covector $X\in\covectors$, the tope graph of its
face $F(X)$ is a gated subgraph of the tope graph of $\covectors$ \cite{KnMa}: the gate of any tope $Y$ in $F(X)$ is the covector $X\circ Y$ (which is obviously a tope). All this implies that the tope graph of any COM $\covectors$
is obtained by amalgamating gated tope subgraphs of its faces, which are all OMs.

Let $\uparr \covectors:=\{ Y \in \{ \pm 1,0\}^{U}: X \leq Y \text{ for some } X \in  \covectors\}$. Then the \emph{ample systems} (AMPs)\footnote{In the papers on COMs, these systems of sign-vectors are called
lopsided (LOPs).} of sign vectors are those COMs such that $\uparr\covectors=\covectors$~\cite{BaChKn}.
From the definition it follows that any face $F(X)$  consists of  the sign vectors of all faces of the subcube of $[-1,+1]^{U}$ with barycenter  $X$.

\subsubsection{AMPs: ample set families}\label{sub:AMP} Just above we defined ample systems as COMs satisfying $\uparr\covectors=\covectors$.
This is not the first definition of ample systems; all previous definitions define them as families of sets  and not as systems of
sign vectors.
Ample sets  have been introduced by  Lawrence \cite{La}
as asymmetric  counterparts  of  oriented  matroids and have been re-discovered independently by
several works in different contexts~\cite{BaChDrKo,BoRa,Wiedemann}. Consequently,
they received different names: lopsided~\cite{La}, simple~\cite{Wiedemann},
extremal~\cite{BoRa}, and ample~\cite{BaChDrKo,Dr}.  Lawrence~\cite{La} defined ample sets for the
investigation of the possible sign patterns
realized by points of a convex set of $\mathbb{R}^d$.
Ample set families admit a multitude of combinatorial and geometric characterizations
\cite{BaChDrKo,BoRa,La} and comprise many natural
examples arising from discrete geometry, combinatorics, graph theory,
and geometry of groups \cite{BaChDrKo,La} (for applications in machine learning, see \cite{ChChMoWa,MoWa}). 

Let $X$ be a subset of a set $U$ with $m$ elements and let $Q_m=Q(U)$. A {\it $X$-cube}  of $Q_m$ is the 1-inclusion graph
of the set family  $\{ Y\cup X': X'\subseteq X\}$, where $Y$ is a subset of $U\setminus X$. If $|X|=m'$, then any $X$-cube
is a $m'$-dimensional subcube of $Q_m$ and $Q_m$ contains $2^{m-m'}$ $X$-cubes. We call any two $X$-cubes \emph{parallel cubes}.
Recall that $X\subseteq U$ is {\it shattered} by a set family ${\mathcal S}\subseteq 2^{U}$ if $\{ X\cap S: S\in {\mathcal S}\}=2^X$.
Furthermore, $X$ is \emph{strongly shattered} by $\mathcal S$ if the 1-inclusion graph $G({\mathcal S})$ of $\mathcal S$ contains
a $X$-cube.
Denote by $\oX(\SS)$ and $\uX(\SS)$ the families  consisting of all shattered and
of all strongly shattered sets of $\SS$, respectively.  Clearly,
$\uX(\SS)\subseteq \oX(\SS)$ and both $\oX(\SS)$ and $\uX(\SS)$ are closed by
taking subsets, i.e., $\oX(\SS)$ and $\uX(\SS)$ are \emph{abstract simplicial complexes}.
The \emph{VC-dimension} \cite{VaCh} $\vcd(\SS)$ of  $\SS$ is the size of a
largest set shattered by $\SS$, i.e., the dimension of the simplicial
complex $\oX(\SS)$. The fundamental \emph{sandwich lemma} (rediscovered
independently in \cite{AnRoSa,BoRa,Dr,Pa}) asserts
that $\lvert\uX(\SS)\rvert \le \lvert \SS\rvert \le \lvert \oX(\SS)\rvert$.
If $d=\vcd(\SS)$ and $m=\lvert U\rvert$, then $\oX(\SS)$ cannot
contain more than $\Phi_d(m):=\sum_{i=0}^d \binom{m}{i}$ simplices. Thus, the
sandwich lemma yields the well-known \emph{Sauer-Shelah lemma}
\cite{Sauer,Shelah,VaCh} that $\lvert \SS \rvert \le \Phi_d(m)$.

A set family $\SS$ is called \emph{ample} if  $\lvert \SS\rvert=\lvert \oX(\SS)\rvert$ \cite{BoRa,BaChDrKo}.
As shown in those papers this is equivalent to the equality $\oX(\SS)=\uX(\SS)$, i.e., $\SS$ is ample
if and only if any set shattered by $\SS$ is strongly shattered. Consequently, the VC-dimension of an ample family is the
dimension of the largest cube in its 1-inclusion graph. A nice characterization of ample set families was provided
in  \cite{La}: $\SS$ is ample if and only if for any cube $Q$ of $Q_m$ if $Q\cap \SS$ is closed by taking antipodes, then
either $Q\cap \SS=\varnothing$ or $Q$ is included in $G(\SS)$.  The paper \cite{BaChDrKo} provides metric and
recursive characterizations of ample families. For example, it is shown in
\cite{BaChDrKo} that  $\SS$ is ample if and only if any two parallel $X$-cubes of the 1-inclusion graph $G(\SS)$ of $\SS$ can be connected
in $G(\SS)$ by a shortest path of $X$-cubes. This implies that 1-inclusion graphs of ample set families are partial cubes; therefore further
we will speak about \emph{ample partial cubes}. Note that maximum set families (i.e., those which  the Sauer-Shelah lemma is tight) are ample.

\subsubsection{Characterizing tope graphs of  OMs, COMS, and AMPs}\label{sub:pc-Minors} 
In this subsection we recall the characterizations of \cite{KnMa} of tope graphs of COMs, OMs, and AMPs. 
We say that a partial cube $G$ {\it is a COM,} an {\it OM}, an {\it AOM}, or an {\it AMP}
if $G$ is the tope graph of a COM, OM, AOM, or AMP, respectively. Tope graphs of COMs and AMPs are closed under pc-minors and tope graphs of OMs and AOMs are closed under contractions but not under restrictions.
Convex subgraphs of OMs
are COMs and convex subgraphs of tope graphs of uniform OMs are ample. The reverse implications are conjectured in~\cite[Conjecture 1]{BaChKn} and~\cite[Conjecture]{La},
respectively.

As shown in~\cite{KnMa}, a partial cube is the \emph{tope graph of a COM} if and only if all its antipodal subgraphs are gated. Another characterization from the same paper is by an infinite family of excluded pc-minors. This family is denoted by $\mathcal{Q}^-$ and defined as follows. For every $m \geq 4$ there are partial cubes $X_m^1, \ldots, X_m^{m+1}\in \mathcal{Q}^-$. Here, $X_m^{m+1}:=Q_m\setminus\{(0,\ldots,0),(0,\ldots,1,0)\}$, $X_m^{m}=X_m^{m+1}\setminus\{(0,\ldots,0,1)\}$, and $X_m^{m-i}=X_m^{m-i+1}\setminus\{e_{im}\}$. Here $e_{im}$ denotes the vector with all zeroes except positions, $i$ and $m$, where it is one.
See Figure~\ref{fig:COMobstructions} for the members of $\mathcal{Q}^-$ of isometric dimension at most $4$. Note in particular that $X_4^1=SK_4$.
Ample partial cubes can be characterized by the excluding set $\mathcal{Q}^{- -}=\{Q_m^{- -}: m\geq 4\}$, where $Q_m^{- -}=Q_m\setminus\{(0,\ldots,0), (1,\ldots,1)\}$ \cite{KnMa}.
Further characterizations from~\cite{KnMa} yield that OMs are exactly the antipodal COMs, and (as mentioned at the end of Subsection~\ref{sub:OM}) AOMs are exactly the halfspaces of OMs.
On the other hand, ample partial cubes are exactly the
partial cubes in which all antipodal subgraphs are hypercubes.




A central notion in COMs and OMs is the one of the \emph{rank} of $G$, which is the largest $d$ such that $G\in\mathcal{F}(Q_{d+1})$. Hence this notion fits well with the topic of the present paper and combining the families of excluded pc-minors $\mathcal{Q}^{- -}$ and $\mathcal{Q}^{-}$, respectively, with $Q_3$ one gets:

\begin{proposition}\cite[Corollary 7.5]{KnMa}\label{prop:excludedminors}
The class of two-dimensional ample partial cubes coincides with $\mathcal{F}(Q_3, C_6)$. The class of two-dimensional COMs  coincides with $\mathcal{F}(Q_3, SK_4)$.
\end{proposition}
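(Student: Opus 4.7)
The plan is to derive both equalities by combining the excluded pc-minor characterizations $\mathrm{AMPs} = \mathcal{F}(\mathcal{Q}^{--})$ and $\mathrm{COMs} = \mathcal{F}(\mathcal{Q}^{-})$ recalled from \cite{KnMa} with Lemma~\ref{VCdim_d}, which identifies the condition $\vcd \le 2$ with membership in $\mathcal{F}(Q_3)$. Using the identifications $C_6 = Q_3^{--}$ and $SK_4 = X_4^1$, the class of two-dimensional ample (resp.\ COM) partial cubes equals $\mathcal{F}(Q_3) \cap \mathcal{F}(\mathcal{Q}^{--})$ (resp.\ $\mathcal{F}(Q_3) \cap \mathcal{F}(\mathcal{Q}^{-})$), and the goal is to show that this equals $\mathcal{F}(Q_3, C_6)$ (resp.\ $\mathcal{F}(Q_3, SK_4)$).

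For the inclusion $\subseteq$: every two-dimensional ample (resp.\ COM) partial cube $G$ lies in $\mathcal{F}(Q_3)$ by Lemma~\ref{VCdim_d}, and since both classes are closed under pc-minors while $C_6$ is not ample (a direct count gives $|\oX(C_6)| = 7 > 6 = |C_6|$) and $SK_4 \in \mathcal{Q}^{-}$ by definition, neither $C_6$ nor $SK_4$ can appear as a pc-minor of $G$, yielding $G \in \mathcal{F}(Q_3, C_6)$ respectively $\mathcal{F}(Q_3, SK_4)$.

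For the reverse inclusion, the key step is to show that every member of $\mathcal{Q}^{--}$ other than $C_6$, and every member of $\mathcal{Q}^{-}$ other than $SK_4$, already contains $Q_3$ as a pc-minor; once this is in place, any $G \in \mathcal{F}(Q_3, C_6)$ (resp.\ $\mathcal{F}(Q_3, SK_4)$) cannot contain any forbidden minor from $\mathcal{Q}^{--}$ (resp.\ $\mathcal{Q}^{-}$) and is therefore ample (resp.\ a COM). For $Q_m^{--}$ with $m \ge 4$, contracting any $\Theta$-class $E_i$ pairs $\mathbf{0}$ with $e_i$ and $\mathbf{1}$ with $\mathbf{1}-e_i$, each pair containing a surviving vertex, so $\pi_i(Q_m^{--}) = Q_{m-1}$, which contains $Q_3$ as a convex subcube since $m - 1 \ge 3$. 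For $X_m^i$ with $m \ge 5$, I would inspect the removed vertices $\mathbf{0}, e_{m-1}, e_m, e_{1m}, \ldots, e_{m-1,m}$ of $X_m^1$ (those of $X_m^i$ with $i \ge 2$ are a subset) and observe that no removed vertex has both first and second coordinate equal to $1$; hence $\{v : v_1 = v_2 = 1\}$, being an intersection of two halfspaces of $X_m^i$, is a convex subgraph isomorphic to $Q_{m-2}$ and therefore contains $Q_3$ as a convex subcube. For $X_4^i$ with $i \ge 2$, the vertex $e_{14}$ is not removed (it is deleted only in the final step passing from $X_4^2$ to $X_4^1$), so contracting the first coordinate leaves every pair $\{v, v+e_1\}$ with at least one surviving endpoint and yields $\pi_1(X_4^i) = Q_3$.

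The main obstacle is the careful bookkeeping of the inductively defined removed vertex sets in the infinite family $\mathcal{Q}^{-}$, and in particular the support condition required to guarantee that $\{v : v_1 = v_2 = 1\}$ avoids all removed vertices when $m \ge 5$. This reduces to observing that every removed vertex of $X_m^1$ has Hamming weight at most $2$ with support contained in either $\{m-1\}$, $\{m\}$, or $\{j, m\}$ for some $1 \le j \le m-1$, so for $m \ge 4$ no such support can contain both indices $1$ and $2$. With this check in place, together with the straightforward contraction arguments for $Q_m^{--}$ and the small cases $X_4^i$ with $i \ge 2$, both equalities follow.
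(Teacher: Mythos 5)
The paper itself does not prove this statement---it is quoted from \cite[Corollary 7.5]{KnMa}---so any self-contained argument goes beyond what the paper does. Your overall strategy (intersect the excluded pc-minor characterizations of ample partial cubes and of COMs with $\mathcal{F}(Q_3)$, and show that every excluded minor other than $C_6$, resp.\ $SK_4$, already has $Q_3$ as a pc-minor) is the right one, and the ample half is correct: $C_6$ is not ample by your count, ample partial cubes are pc-minor closed, and $\pi_i(Q_m^{--})=Q_{m-1}$ contains $Q_3$ for $m\ge 4$. (Pedantically, for the reverse inclusion one needs $C_6=Q_3^{--}$ itself to be among the excluded minors, i.e.\ the family $\mathcal{Q}^{--}$ must start at $m=3$ rather than $m=4$ as printed --- otherwise $C_6\in\mathcal{F}(\mathcal{Q}^{--})$ would be declared ample; your argument excludes $C_6$ by hypothesis anyway, so this is harmless.)

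The COM half, however, has genuine gaps in the case analysis of $\mathcal{Q}^{-}$. First, the $m=4$ bookkeeping is wrong: by the recursion $X_m^{m-i}=X_m^{m-i+1}\setminus\{e_{im}\}$, the vertex $e_{14}$ is deleted at the \emph{first} of these steps (passing from $X_4^4$ to $X_4^3$), and it is $e_{34}$ that is deleted last; consequently the $\pi_1$-fiber $\{e_4,e_{14}\}$ is entirely absent from $X_4^3$ and $X_4^2$, and $\pi_1(X_4^i)\neq Q_3$ for $i=2,3$. Second, and more seriously, your $m\ge 5$ argument rests on the printed coordinates of the deleted vertices, and those cannot be as printed: if the second deleted vertex of $X_m^{m+1}$ were $(0,\ldots,0,1,0)=e_{m-1}$, then $X_4^1$ would have a vertex of degree $4$ (namely $\{1,2\}$) and would not equal $SK_4$, and one can check that $X_4^2$ would then have neither $Q_3$ nor $SK_4$ as a pc-minor, falsifying the very statement being proved. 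The reading consistent with $X_4^1=SK_4$ deletes $(1,\ldots,1,0)$ instead, and that vertex lies in $\{v: v_1=v_2=1\}$, so your claimed convex $Q_{m-2}$ is really a $Q_{m-2}$ minus one vertex --- not enough when $m=5$. A uniform repair: for $m\ge 5$ the halfspace $\{v: v_m=0\}$ of every $X_m^i$ misses only $(0,\ldots,0)$ and $(1,\ldots,1,0)$, hence is a convex $Q_{m-1}^{--}$, which contracts onto $Q_{m-2}\supseteq Q_3$; for $m=4$ one checks directly that $\pi_3(X_4^2)$, $\pi_2(X_4^3)$, $\pi_1(X_4^4)$ and $\pi_1(X_4^5)$ all equal $Q_3$. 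With these corrections your proof goes through.
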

%

%

 \begin{figure}[htb]
\centering
\includegraphics[width=0.80\textwidth]{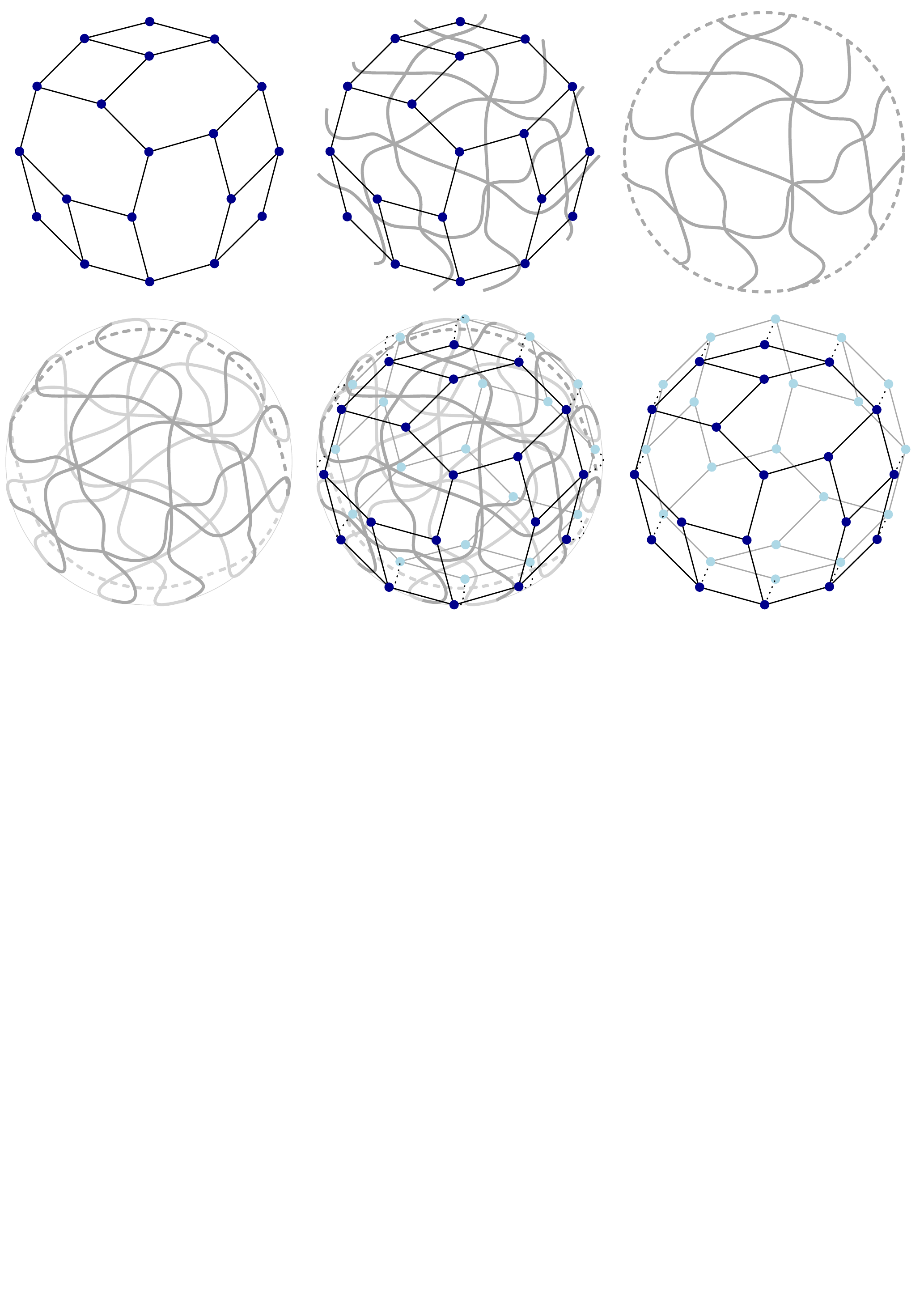}
\caption{From upper left to bottom right: a disk $G$, a pseudoline arrangement $U$ whose region graph is $G$, adding a line $\ell_{\infty}$ to $U$, the pseudocircle arrangement $U'$ obtained from $U\cup\{\ell_{\infty}\}$ with a centrally mirrored copy, the pseudocircle arrangement $U'$ with region graph $G'$, the OM $G'$ with halfspace $G$.}
\label{fig:AOM}
\end{figure}

\subsubsection{Disks}
 A \emph{pseudoline arrangement} $U$ is a family of simple non-closed curves where every pair of curves intersects exactly once and crosses in that point. Moreover, the curves must be extendable to infinity without introducing further crossings. Note that several curves are allowed to cross in the same point. Figure~\ref{fig:AOM} for an example. We say that a partial cube $G$ is a \emph{disk} if it is the region graph of a pseudoline arrangement $U$. The $\Theta$-classes of $G$ correspond to the elements of $U$. Contrary to a convention sometimes made in the literature, we allow a pseudoline arrangement $U$ to be empty, consisting of only one element, or all pseudolines to cross in a single point. These situations yield the simplest examples of disks, namely: $K_1$, $K_2$, and the even cycles. Disks are closed under contraction, since contracting a $\Theta$-class correspond to removing a line from the pseudoline arrangement. It is well-known that disks are tope graphs of AOMs of rank at most $2$. A quick explanation can be found around~\cite[Theorem 6.2.3]{BjLVStWhZi}. The idea is to first add a line $\ell_{\infty}$ at infinity to the pseudoline arrangement $U$ representing $G$. Then embed the disk enclosed by $\ell_{\infty}$ on a hemisphere of $S^2$, such that $\ell_{\infty}$ maps on the equator. Now, mirror the arrangement through the origin of $S^2$ in order to obtain a \emph{pseudocircle arrangement} $U'$. The region graph of $U'$ is an OM $G'$, and the regions on one side of $\ell_{\infty}$ correspond to a halfspace of $G'$ isomorphic to $G$. See Figure~\ref{fig:AOM} for an illustration.

%
%
%
%
%
%


\section{Hyperplanes and isometric expansions}
In this section we characterize the graphs from ${\mathcal F}(Q_{d+1})$ (i.e., partial cubes of VC-dimension $\le d$) via the hyperplanes of their $\Theta$-classes and via the operation of isometric expansion.

\subsection{Hyperplanes}

Let $G$ be isometrically embedded in the hypercube $Q_m$. For a $\Theta$-class $E_i$ of $G$, recall that $G^-_i,G^+_i$ denote the complementary
halfspaces defined by $E_i$ and $\partial G^-_i, G^+_i$ denote their boundaries. The {\it hyperplane} $H_i$ of $E_i$ has the middles of edges of $E_i$ as the vertex-set and two such middles are adjacent in $H_i$ if and only if the corresponding edges
belong to a common square of $G$, i.e.,  $H_i$ is isomorphic to $\partial G^-_i$ and $\partial G^+_i$. 
Combinatorially,  $H_i$ is the 1-inclusion graph of the set family defined by $\partial H^-_i\cup \partial H^+_i$ by removing from each set the element $i$.

\begin{proposition} \label{VCdim_pc} A partial cube $G$ has VC-dimension $\le d$ (i.e., $G$ belongs to ${\mathcal F}(Q_{d+1})$) if and only if each hyperplane $H_i$ of $G$ has VC-dimension $\le d-1$. 
\end{proposition}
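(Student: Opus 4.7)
My plan is to prove both directions of the equivalence by contraposition, using a single structural dictionary as the bridge. The key observation is that a vertex of the hyperplane $H_i$ is in bijection with the edges of $E_i$: for an edge $uv\in E_i$ with $u\in\partial G_i^-$ and $v\in\partial G_i^+$, its middle is a vertex of $H_i$, and since $u$ and $v$ differ only in coordinate $i$, the sign vector of that hyperplane vertex on any subset $Y\subseteq U\setminus\{i\}$ coincides with the common sign vector of $u$ and $v$ on $Y$. Thus shattering in $H_i$ and in $G$ translate into each other through the edges of $E_i$.

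For the direction ``$\vcd(G)\le d$ implies $\vcd(H_i)\le d-1$ for each $i$'', I would argue contrapositively: assume some $H_i$ shatters a set $Y\subseteq U\setminus\{i\}$ with $|Y|=d$. For each sign pattern $\epsilon\in\{-,+\}^Y$, I pick an edge $u_\epsilon v_\epsilon\in E_i$ whose middle realizes $\epsilon$ in $H_i$; by the observation above, $u_\epsilon$ realizes $(-,\epsilon)$ and $v_\epsilon$ realizes $(+,\epsilon)$ on $\{i\}\cup Y$. Letting $\epsilon$ range over $\{-,+\}^Y$ exhibits all $2^{d+1}$ sign patterns on $\{i\}\cup Y$ as vertices of $G$, so $G$ shatters a set of size $d+1$, contradicting $\vcd(G)\le d$.

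For the converse ``$\vcd(H_i)\le d-1$ for each $i$ implies $\vcd(G)\le d$'', again contrapositively: assume $G$ shatters $X=\{i_0,i_1,\ldots,i_d\}$, and I will show that $H_{i_0}$ shatters $Y:=\{i_1,\ldots,i_d\}$. Fix $\epsilon\in\{-,+\}^Y$ and set $C_\epsilon:=\bigcap_{k=1}^d G_{i_k}^{\epsilon_k}$; by Theorem~\ref{Djokovic} each halfspace $G_{i_k}^{\epsilon_k}$ is convex, so $C_\epsilon$ is convex as an intersection of convex sets. Shattering of $X$ supplies $a\in C_\epsilon\cap G_{i_0}^-$ and $b\in C_\epsilon\cap G_{i_0}^+$; by convexity, any shortest $(a,b)$-path stays in $C_\epsilon$, and it must cross $E_{i_0}$ because $a,b$ lie on opposite sides of it. The crossing edge $u_\epsilon v_\epsilon\in E_{i_0}$ has both endpoints in $C_\epsilon$, hence the corresponding vertex of $H_{i_0}$ realizes $\epsilon$ on $Y$. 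Varying $\epsilon$ shows that $Y$ is shattered by $H_{i_0}$, giving $\vcd(H_{i_0})\ge d$ and contradicting the hypothesis.

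I do not anticipate any serious obstacle. Both directions amount to a short translation between hyperplane vertices and the endpoints of $E_i$-edges, combined with the Djokovi\'c convexity of halfspaces. The only mildly subtle point is in the reverse direction, where I need the \emph{entire} shortest path (not merely its endpoints) to remain inside $C_\epsilon$ so that the $E_{i_0}$-edge found along it has both endpoints in $C_\epsilon$; this is exactly what convexity of $C_\epsilon$ delivers.
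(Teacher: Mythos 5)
Your proposal is correct, and while your forward direction is essentially the paper's argument (made more explicit: a shattered $d$-set $Y$ in $H_i$ lifts to the shattered $(d+1)$-set $\{i\}\cup Y$ in $\partial G_i^-\cup\partial G_i^+$), your converse is a genuinely different and more direct route. The paper proves the converse by introducing the class ${\mathcal H}_{d-1}$ of partial cubes whose hyperplanes all have VC-dimension $\le d-1$, showing it is closed under restrictions and contractions (the delicate step being the identity $\partial \pi_{j}(G)^-_i=\pi_{j}(\partial G^-_i)$, verified by a case analysis on preimages of edges in $Q_m$), and then concluding via Lemma~\ref{VCdim_d} that a graph in ${\mathcal H}_{d-1}$ cannot have $Q_{d+1}$ as a pc-minor. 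You instead argue locally: if $G$ shatters $X=\{i_0,\ldots,i_d\}$, then for each $\epsilon\in\{-,+\}^{\{i_1,\ldots,i_d\}}$ the set $C_\epsilon=\bigcap_k G_{i_k}^{\epsilon_k}$ is convex by Theorem~\ref{Djokovic}, contains vertices on both sides of $E_{i_0}$ by shattering, and hence contains an entire shortest path crossing $E_{i_0}$; the crossing edge yields a vertex of $H_{i_0}$ realizing $\epsilon$, so $H_{i_0}$ shatters a $d$-set. Both arguments use that $G$ is a partial cube in an essential way (consistent with the paper's Remark~\ref{VCdim_1}+1 that the hypothesis cannot be dropped), but yours needs only Djokovi\'c convexity and avoids pc-minors and Lemma~\ref{VCdim_d} entirely, which makes it shorter and self-contained; what the paper's detour buys is the commutation of hyperplane boundaries with contraction, a structural fact in the spirit of the pc-minor machinery used throughout the rest of the paper. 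Your proof is complete as stated; the one point you rightly flag, that the whole shortest path and not just its endpoints must lie in $C_\epsilon$, is exactly what convexity provides.
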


\begin{proof} If some hyperplane $H_i$ of $G\in {\mathcal F}(Q_{d+1})$ has VC-dimension $d$, then $\partial G^-_i$ and $\partial G^+_i$ also have VC-dimension $d$ and their union
$\partial H^-_i\cup \partial H^+_i$ has VC-dimension $d+1$. Consequently, $G$ has VC-dimension $\ge d+1$, contrary to Lemma \ref{VCdim_d}.
To prove the converse implication, denote by ${\mathcal H}_{d-1}$ the set of all partial cubes of $G$ in which the hyperplanes have VC-dimension $\le d-1$. We assert that ${\mathcal H}_{d-1}$
is closed under taking pc-minors. First, ${\mathcal H}_{d-1}$ is closed under taking restrictions because the hyperplanes $H'_i$ of any convex subgraph $G'$ of a graph $G\in {\mathcal H}_{d-1}$ are subgraphs of the respective hyperplanes $H_i$ of $G$. Next we show that ${\mathcal H}_{d-1}$ is closed under taking contractions.
Let $G\in {\mathcal H}_{d-1}$ and let $E_i$ and $E_j$ be two different $\Theta$-classes of $G$. Since $\pi_{j}(G)$ is a partial cube, to show that $\pi_{j}(G)$ belongs to ${\mathcal H}_{d-1}$ it suffices to
show that $\partial \pi_{j}(G)^-_i=\pi_{j}(\partial G^-_i)$. Indeed, this would imply that  the $i$th hyperplane of $\pi_{j}(G)$ coincides with the $j$th contraction of the $i$th hyperplane of $G$.
Consequently, this would imply that the VC-dimension of all hyperplanes of $\pi_{j}(G)$ is at most $d-1$.

Pick $v\in \pi_{j}(\partial G^-_i)$. Then $v$ is the image of the edge $v'v''$ of the hypercube $Q_m$ such that at least one of the vertices $v',v''$, say $v'$,  belongs to $\partial G^-_i$. This implies that
the $i$th neighbor $u'$ of $v'$ in $Q_m$ belongs to $\partial G^+_i$. Let $u''$ be the common neighbor of $u'$ and $v''$ in $Q_m$ and $u$ be the image of the edge $u'u''$ by the $j$-contraction. Since $u'\in \partial G^+_i$,
the $i$th edge $vu$ belongs to $\pi_j(G)$, whence  $v\in \partial \pi_{j}(G)^-_i$ and $u\in \partial \pi_{j}(G)^+_i$. This shows  $\pi_{j}(\partial G^-_i)\subseteq \partial \pi_{j}(G)^-_i$.
To prove the converse inclusion, pick a vertex $v\in \partial \pi_{j}(G)^-_i$. This implies that the $i$-neighbor $u$ of $v$ in $Q_m$ belongs to $\partial \pi_{j}(G)^+_i$. As in the previous case, let $v$ be the image
of the $j$-edge $v'v''$ of the hypercube $Q_m$ and let $u'$ and $u''$ be the $i$-neighbors of $v'$ and $v''$ in $Q_m$. Then $u$ is the image of the $j$-edge $u'u''$. Since the vertices $u$ and $v$ belong to $\pi_{j}(G)$, at least one vertex from each of the pairs $\{ u',u''\}$ and $\{ v',v''\}$ belongs to $G$. If one of the edges $u'v'$ or $u''v''$ of $Q_m$ is an edge of $G$, then  $u\in \pi_{j}(\partial G^+_i)$ and $v\in \pi_{j}(\partial G^-_i)$ and we are done.
Finally, suppose that $u'$ and $v''$ are vertices of $G$. Since $G$ is an isometric subgraph of $Q_m$ and $d(u',v'')=2$, a common neighbor $v',u''$ of $u'$ and $v''$ also belongs to $G$ and we fall in the previous case. This shows that $\partial \pi_{j}(G)^-_i\subseteq \pi_{j}(\partial G^-_i)$. Consequently,  ${\mathcal H}_{d-1}$ is closed under taking pc-minors. Since $Q_{d+1}$ does not belong to ${\mathcal H}_{d-1}$, if $G$ belongs to ${\mathcal H}_{d-1}$, then $G$
does not have $Q_{d+1}$ as a pc-minor, i.e., $G\in {\mathcal F}(Q_{d+1})$.
\end{proof}

\begin{corollary} \label{VCdim_1} A partial cube $G$ belongs to ${\mathcal F}(Q_{3})$ if and only if each hyperplane $H_i$ of $G$ has VC-dimension $\le 1$.
\end{corollary}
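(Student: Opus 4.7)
The plan is essentially to observe that Corollary~\ref{VCdim_1} is the specialization of Proposition~\ref{VCdim_pc} to the value $d=2$. There is nothing genuinely new to prove: the equivalence asserted in the corollary is precisely the statement ``$G \in {\mathcal F}(Q_{d+1})$ iff every hyperplane has VC-dimension $\le d-1$'' read with $d=2$.

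Concretely, I would write a one-line proof. First, recall by Lemma~\ref{VCdim_d} that ${\mathcal F}(Q_3)$ coincides with the class of partial cubes of VC-dimension $\le 2$. Then invoke Proposition~\ref{VCdim_pc} with the choice $d=2$: this yields that a partial cube $G$ has VC-dimension $\le 2$ if and only if each hyperplane $H_i$ of $G$ has VC-dimension $\le 1$. Combining the two gives exactly the claimed equivalence.

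Since the work has already been done in Proposition~\ref{VCdim_pc}, there is no obstacle to speak of; the only ``choice'' is cosmetic, namely whether to state the corollary as a direct substitution or to unpack what ``VC-dimension $\le 1$ for a hyperplane'' means combinatorially. If I wanted to add value, I would remark that a partial cube of VC-dimension $\le 1$ is exactly a tree (since ${\mathcal F}(Q_2)$ is the class of trees, as noted in the introduction), so the corollary can equivalently be phrased as: a partial cube $G$ is two-dimensional if and only if every hyperplane $H_i$ is a tree (viewed as a partial cube of the appropriate reduced hypercube). This reformulation is useful downstream for the structural analysis of ${\mathcal F}(Q_3)$ and is the main reason to record the corollary separately from Proposition~\ref{VCdim_pc}. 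The proof itself, however, is a one-liner by direct appeal to Proposition~\ref{VCdim_pc}.
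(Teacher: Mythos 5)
Your proof is correct and is exactly the paper's (implicit) argument: the corollary is stated without proof precisely because it is the case $d=2$ of Proposition~\ref{VCdim_pc}, combined with Lemma~\ref{VCdim_d}. One caution on your optional reformulation: it is not quite right to say each hyperplane ``is a tree,'' since hyperplanes of graphs in ${\mathcal F}(Q_3)$ need not be partial cubes or even connected --- as the paper notes in the remark following the corollary, any $1$-inclusion graph of VC-dimension $1$ can occur, and the correct combinatorial description is that of a \emph{virtual isometric tree} (a forest of isometric subtrees), established in Proposition~\ref{virtual_isometric_tree}.
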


\begin{remark} In Proposition~\ref{VCdim_pc} it is essential for $G$ to be a partial cube. For example, let ${\mathcal S}$ consist of all subsets of even size of an $m$-element set. Then the 1-inclusion graph $G({\mathcal S})$ of ${\mathcal S}$ consists of isolated vertices (i.e.,  $G({\mathcal S})$  does not contain any edge). Therefore, any hyperplane of $G({\mathcal S})$ is empty, however the VC-dimension of $G({\mathcal S})$ depends on $m$ and can be arbitrarily large.
\end{remark}

By Corollary \ref{VCdim_1}, the hyperplanes of graphs from ${\mathcal F}(Q_{3})$ have VC-dimension 1. However they are not necessarily partial cubes:  any 1-inclusion graph of VC-dimension 1 may occur as a hyperplane of a graph from ${\mathcal F}(Q_{3})$. Thus, it will be useful to establish the metric structure of 1-inclusion graphs of VC-dimension 1.
We  say that a 1-inclusion graph $G$ is a {\it virtual isometric tree} of $Q_m$ if there exists an isometric tree $T$ of $Q_m$ containing $G$ as an induced subgraph. Clearly, each virtually isometric tree is a forest in which each connected component is an isometric subtree of $Q_m$.

\begin{proposition} \label{virtual_isometric_tree} An induced subgraph $G$ of $Q_m$ has VC-dimension $1$ if and only if $G$ is a virtual isometric tree of $Q_m$.
\end{proposition}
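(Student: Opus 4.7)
The plan is to prove both implications. If $G$ embeds in an isometric tree $T$ of $Q_m$, then since $T$ is a tree each $\Theta$-class of $T$ is a single edge. For any two distinct $\Theta$-classes $E_i, E_j$, removing their two edges disconnects $T$ into exactly three components, and within each component both coordinates $i$ and $j$ are constant. Hence at most three of the four sign patterns on $\{i,j\}$ occur in $V(T)$, so $V(G) \subseteq V(T)$ does not shatter $\{i,j\}$, giving $\vcd(G) \le 1$.

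For the converse I induct on $m$. If $G$ is constant on some coordinate, contract it and lift the isometric tree supplied by the hypothesis back into the appropriate slice of $Q_m$. So assume all $m$ coordinates are used. Fix coordinate $m$, write $G^\pm$ for the two halves, and set $A = \pi_m(G^-)$, $B = \pi_m(G^+) \subseteq Q_{m-1}$. The non-shattering of each pair $\{j,m\}$ yields two structural facts: $|A\cap B| \le 1$ (otherwise two distinct shared vertices produce a shattered pair $\{j,m\}$), and the sets $J_A, J_B \subseteq [m-1]$ of coordinates on which $A$ (resp.\ $B$) is non-constant are disjoint. Let $J_* = [m-1] \setminus (J_A \cup J_B)$; each such coordinate is used by $G$, so $A$ and $B$ take opposite constant values there.

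If $A\cap B = \{u_0\}$, then $J_* = \emptyset$ (else $u_0$ would receive conflicting values on $J_*$). Applying the induction hypothesis in the smaller dimensions $|J_A|$ and $|J_B|$ gives isometric trees extending $A$ and $B$ and both containing $u_0$; embed them in $Q_{m-1}$ inside their respective slabs, lift to $T_A\times\{0\}$ and $T_B\times\{1\}$ in $Q_m$, and add the coordinate-$m$ edge between $(u_0,0)$ and $(u_0,1)$. The result uses each of $J_A \cup J_B \cup \{m\} = [m]$ exactly once as a $\Theta$-class edge, hence is the desired isometric tree of $Q_m$ containing $V(G)$.

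The harder case is $A \cap B = \emptyset$, with $J_*$ possibly non-empty. Here I introduce pivot vertices $u_A$ in the $A$-slab and $u_B$ in the $B$-slab of $Q_{m-1}$, agreeing on $J_A \cup J_B$ and pointwise-complementary on $J_*$. The key lemma is that $A \cup \{u_A\}$ still has VC-dim $\le 1$: for any pair $\{j,k\} \subseteq J_A$ the $\{j,k\}$-patterns of $A \cup \{u_A\}$ coincide with those of $A \cup B = \pi_m(V(G))$ (because $B$ is constant on $J_A$ with the same restriction as $u_A$), and $\pi_m$ cannot increase VC-dimension; the symmetric statement holds for $B \cup \{u_B\}$. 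By induction in dimensions $|J_A|$ and $|J_B|$ one obtains $T_A \supseteq A \cup \{u_A\}$ and $T_B \supseteq B \cup \{u_B\}$ using only $J_A$ and $J_B$ coordinates, respectively. Lift to $T_A\times\{0\}$ and $T_B\times\{1\}$ in $Q_m$ and connect $(u_A,0)$ to $(u_B,1)$ by any $Q_m$-geodesic, which flips exactly the $|J_*|+1$ remaining coordinates. The main obstacle is verifying that this bridging geodesic meets the two lifted subtrees only at its endpoints: every intermediate vertex disagrees with the $A$-slab (resp.\ $B$-slab) constants on at least one coordinate in $J_*$, so it lies in neither $T_A\times\{0\}$ nor $T_B\times\{1\}$. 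A final coordinate count then confirms that the result is an isometric tree in $Q_m$ containing $V(G)$.
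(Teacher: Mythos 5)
Your proof is correct, and it takes a genuinely different route from the paper's. The paper observes that VC-dimension $\le 1$ is exactly pairwise compatibility of the split system $\{G_i^-,G_i^+\}$ (for each pair of coordinates one of the four quadrants is empty) and then invokes Buneman's theorem to produce a weighted tree realizing these splits, which is converted into an isometric tree of $Q_m$ by replacing each weighted edge by a $Q_m$-geodesic. You instead give a self-contained induction on $m$: after disposing of constant coordinates, you split along one coordinate, extract from the non-shattering of the pairs $\{j,m\}$ that the two projected halves $A,B$ share at most one vertex and have disjoint sets of active coordinates, and then glue the inductively obtained subtrees either at the common vertex (Case $|A\cap B|=1$, where you correctly note $J_*=\emptyset$) or via a bridging geodesic through the coordinates of $J_*\cup\{m\}$ (Case $A\cap B=\emptyset$, where the key lemma that $A\cup\{u_A\}$ still has VC-dimension $\le 1$, the disjointness of the bridge from the two lifted subtrees, and the final count of one edge per coordinate all check out). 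Your forward direction is also argued rather than asserted: deleting the two edges of two $\Theta$-classes of an isometric tree leaves three components on each of which both coordinates are constant, so only three of the four patterns occur. The paper's approach buys brevity by one appeal to a classical theorem; yours buys self-containedness and an explicit recursive construction of the extending tree. One cosmetic point shared with the paper: what is actually proved in both arguments is the equivalence with VC-dimension $\le 1$, the degenerate cases having dimension $0$.
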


\begin{proof} Each isometric tree of $Q_m$ has VC-dimension $1$, thus any virtual isometric tree has VC-dimension $\le 1$. Conversely, let $G$ be an induced subgraph of $Q_m$ of VC-dimension $\le 1$. We will say that two parallelism classes $E_i$ and $E_j$ of $Q_m$ are {\it compatible} on $G$ if one of the four intersections $G^-_i\cap G^-_j, G^-_i\cap G^+_j,G^+_i\cap G^-_j, G^+_i\cap G^+_j$ is empty and {\it incompatible} if the four intersections are nonempty. From the definition of VC-dimension immediately follows that $G$ has VC-dimension 1 if and only if any two parallelism classes of $Q_m$ are compatible on $G$.  By a celebrated result by Buneman \cite{Bu} (see also \cite[Subsection 3.2]{DrHuKoMouSp}), on the vertex set of $G$ one can define a weighted tree $T_0$ with the same vertex-set as $G$ and such that the bipartitions $\{G^-_i,G^+_i\}$ are in bijection with the splits of $T_0$, i.e., bipartitions obtained by removing edges of $T_0$.  The length of each edge of $T_0$ is the number of $\Theta$-classes of $Q_m$ defining the same bipartition of $G$. The distance $d_{T_0}(u,v)$ between two vertices of $T_0$ is equal to the number of parallelism classes of $Q_m$ separating the vertices of $T_0$. We can transform $T_0$ into an isometrically embedded tree $T$ of $Q_m$ in the following way: if the edge $uv$ of $T_0$ has length $k>1$, then replace this edge by any shortest path $P(u,v)$ of $Q_m$ between $u$ and $v$. Then it can be easily seen that $T$ is an isometric tree of $Q_m$, thus $G$ is a virtual isometric tree.
\end{proof}

\subsection{Isometric expansions}\label{subsec:isometric}
In order to characterize median graphs Mulder \cite{Mu} introduced the notion of a convex expansion of a graph. A similar construction of isometric expansion was introduced in \cite{Ch_thesis,Ch_hamming}, with the purpose to characterize isometric subgraphs of hypercubes.  A triplet $(G^1,G^0,G^2)$ is called an {\it isometric cover} of a connected graph $G$, if the following conditions are satisfied:
\begin{itemize}
\item $G^1$ and $G^2$ are two isometric subgraphs of $G$;
\item $V(G)=V(G^1)\cup V(G^2)$ and $E(G)=E(G^1)\cup E(G^2)$;
\item $V(G^1)\cap V(G^2)\ne \varnothing$ and $G^0$ is the
subgraph of $G$ induced by  $V(G^1)\cap V(G^2)$.
\end{itemize}

A graph $G'$ is an {\it isometric expansion} of  $G$ with respect to an isometric cover $(G^1,G^0,G^2)$ of $G$  (notation $G'=\psi(G)$) if $G'$ is obtained from $G$ in the following way:

\begin{itemize}
\item  replace each vertex $x$ of $V(G^1)\setminus V(G^2)$ by a vertex $x_1$ and replace each vertex $x$ of  $V(G^2)\setminus V(G^1)$ by a vertex $x_2$;
\item replace each vertex $x$ of $V(G^1)\cap V(G^2)$ by two vertices $x_1$ and $x_2$;
\item add an edge between two vertices $x_i$ and $y_i,$ $i=1,2$ if and only if  $x$ and $y$ are adjacent vertices of $G^i$, $i=1,2$;
\item add an edge between any two vertices $x_1$ and $x_2$ such that  $x$ is a vertex of $V(G^1)\cap V(G^2)$.
\end{itemize}

\begin{figure}[htb]
\centering
\includegraphics[width=.80\textwidth]{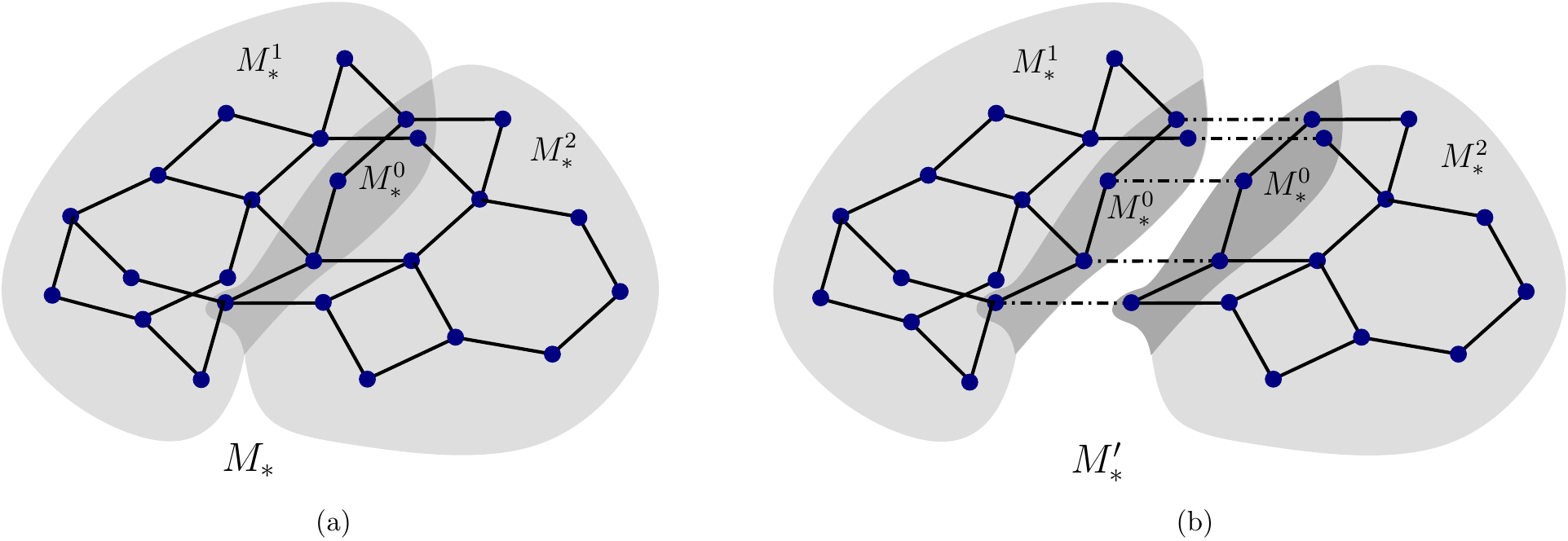}
\caption{(a) The graph $M_*$. (b) The isometric expansion $M'_*$ of $M_*$.}
\label{fig:isom_exp_M_*}
\end{figure}

In other words, $G'$ is obtained by taking a copy of $G^1$, a copy of $G^2$, supposing them disjoint, and adding an edge between any two twins, i.e., two vertices arising from the same vertex of $G^0$.
The following result characterizes all partial cubes by isometric expansions:

\begin{proposition} \label{pc-expansion} \cite{Ch_thesis,Ch_hamming}
A graph is a partial cube if and only if it can be obtained by a sequence of isometric expansions from a single vertex.
\end{proposition}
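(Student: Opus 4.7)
The plan is to prove the two directions by induction on the isometric dimension and on the number of expansions, respectively, with the $\Theta$-classes providing the natural bridge between contractions and expansions.

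For the ``only if'' direction, suppose $G$ is a partial cube isometrically embedded in $Q_m$ with $m\ge 1$, and pick any $\Theta$-class $E_i$ of $G$. I would set $G^1:=\pi_i(G^-_i)$, $G^2:=\pi_i(G^+_i)$, and $G^0:=\pi_i(\partial G^-_i)=\pi_i(\partial G^+_i)$ (viewed inside $\pi_i(G)$), and claim that $(G^1,G^0,G^2)$ is an isometric cover of $\pi_i(G)$ such that $G=\psi(\pi_i(G))$ with respect to this cover. Since $\pi_i(G)$ is a partial cube of strictly smaller isometric dimension by the discussion in Subsection~\ref{minors}, the inductive hypothesis supplies a sequence of isometric expansions from a single vertex producing $\pi_i(G)$, and prepending the expansion above yields one for $G$. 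To verify the claim, I would note that the halfspaces $G^-_i,G^+_i$ are convex in $G$ (by Theorem~\ref{Djokovic}), and since $E_i$ does not appear inside either halfspace, the contraction $\pi_i$ restricts to an isomorphism on each of them, so $G^1$ and $G^2$ are isometric subgraphs of $\pi_i(G)$ and $G^0=G^1\cap G^2$ is exactly the common image of $\partial G^-_i$ and $\partial G^+_i$. The combinatorial recipe for $\psi$ then precisely reverses the contraction: the duplicated vertices correspond to endpoints of $E_i$-edges, the edges across the duplication are precisely the edges of $E_i$, and the edges inside the two copies come from $G^-_i$ and $G^+_i$.

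For the ``if'' direction, I would induct on the length of the expansion sequence. A single vertex is trivially a partial cube ($Q_0$). Assuming $G$ is a partial cube with an isometric embedding $\varphi:V(G)\to \{0,1\}^{m-1}$, and that $G'=\psi(G)$ is obtained from the isometric cover $(G^1,G^0,G^2)$, I would define $\varphi':V(G')\to \{0,1\}^m$ by $\varphi'(x_1)=(\varphi(x),0)$ and $\varphi'(x_2)=(\varphi(x),1)$, and verify it is an isometric embedding. Bipartiteness of $G'$ follows from that of $G$ together with the fact that the new $\Theta$-class $E_m$ consists of matching edges between the two copies. Distances within a single copy reduce to distances in $G^1$ or $G^2$, which agree with distances in $G$ by the isometry hypothesis and hence with Hamming distance in $Q_{m-1}$. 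For a pair $x_1,y_2$, a shortest $(x_1,y_2)$-path in $G'$ must cross exactly one edge of $E_m$, at some vertex $z\in G^0$, so its length equals $\min_{z\in G^0}(d_G(x,z)+d_G(z,y))+1$, which equals $d_G(x,y)+1$ since $V(G)=V(G^1)\cup V(G^2)$, $E(G)=E(G^1)\cup E(G^2)$ force every $(x,y)$-geodesic with $x\in G^1$, $y\in G^2$ to meet $G^0$; this matches the Hamming distance $d_{Q_m}(\varphi'(x_1),\varphi'(y_2))$.

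The main obstacle I anticipate is the careful bookkeeping in the forward direction, in particular checking that the combinatorial description of $\psi$ on $(\pi_i(G);G^1,G^0,G^2)$ really reproduces $G$ on the nose rather than some graph with extra or missing edges. The key point to nail down is that a vertex $w\in V(\pi_i(G))$ lies in $G^0=G^1\cap G^2$ if and only if $w$ is the image of a full $E_i$-edge of $G$, which rests on the fact that $\pi_i$ is injective on each of $G^-_i$ and $G^+_i$, identifying only the endpoints of edges in $E_i$. Once this is established, matching edges of $G^1$ and $G^2$ in $\pi_i(G)$ with edges of $G^-_i$ and $G^+_i$ in $G$, and adding back the edges of $E_i$ between twinned vertices, gives the required identification $G=\psi(\pi_i(G))$.
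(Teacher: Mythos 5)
Your proposal is correct: the paper states Proposition~\ref{pc-expansion} as a citation to \cite{Ch_thesis,Ch_hamming} without reproducing a proof, and your argument is the standard one from those sources, exploiting exactly the contraction/expansion duality that the paper itself invokes informally in Subsection~\ref{subsec:isometric}. The only cosmetic imprecision is the phrase ``must cross exactly one edge of $E_m$'' in the backward direction --- a priori a path could cross several, but your projection argument already gives the lower bound $d_{G'}(x_1,y_2)\ge d_G(x,y)+k$ for $k\ge 1$ crossings, which is all that is needed.
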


We also need the following property of isometric expansions:

\begin{lemma}\label{convex_expansion} \cite[Lemma 6]{ChKnMa} If $S$ is a convex subgraph of a partial cube $G$ and $G'$ is obtained from $G$ by an isometric expansion $\psi$, then $S':=\psi(S)$ is a convex subgraph of $G'$.
\end{lemma}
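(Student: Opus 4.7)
The plan is to verify the convexity of $\psi(S)$ in $G'$ directly, by computing intervals in $G'$ in terms of intervals in $G, G^1, G^2$.

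The preliminary step is to establish the distance identities
\[
d_{G'}(u_i, v_i) = d_{G^i}(u, v) \quad (i \in \{1,2\}), \qquad d_{G'}(u_1, v_2) = d_G(u, v) + 1 \ (u \in V(G^1),\, v \in V(G^2)).
\]
For the same-copy case, project any shortest $u_i$-$v_i$ path in $G'$ to a walk in $G$: bridge edges become self-loops and must occur in matched pairs (since bridges toggle copies and the endpoints lie in the same copy), so the length of the path equals the length of the projected walk plus an even nonnegative number of bridge edges. Equality with $d_{G^i}(u, v)$ is then achieved by a shortest $u$-$v$ path inside $G^i$, which avoids bridges entirely. For the mixed case, any path from $u_1$ to $v_2$ uses an odd, hence positive, number of bridges; by the same projection argument the length is minimized by using a single bridge at some $x \in V(G^0)$, giving total length $d_{G^1}(u, x) + 1 + d_{G^2}(x, v)$. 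This minimum equals $d_G(u, v) + 1$, because every shortest $u$-$v$ path in $G$ must cross $V(G^0) = V(G^1) \cap V(G^2)$ (since $E(G) = E(G^1) \cup E(G^2)$ forces any transition from $V(G^1) \setminus V(G^2)$ to $V(G^2) \setminus V(G^1)$ to occur at a shared vertex), and because $G^1, G^2$ are isometric in $G$.

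With these distance formulas in hand, the convexity of $\psi(S)$ reduces to a short case analysis. Take $u', v' \in \psi(S)$ and $w' \in I_{G'}(u', v')$. In the same-copy case, say $u' = u_1$ and $v' = v_1$, the distance identity yields $I_{G'}(u_1, v_1) = \{w_1 : w \in I_{G^1}(u, v)\} = \{w_1 : w \in I_G(u, v) \cap V(G^1)\}$; since $u, v \in S$ and $S$ is convex in $G$, we have $I_G(u, v) \subseteq S$, and therefore $I_{G'}(u_1, v_1) \subseteq \{w_1 : w \in S \cap V(G^1)\} \subseteq \psi(S)$. In the mixed case ($u' = u_1$, $v' = v_2$, with $u \in S \cap V(G^1)$ and $v \in S \cap V(G^2)$), if $w' = w_1$ then combining the identities gives $d_G(u, w) + d_G(w, v) = d_G(u, v)$, whence $w \in I_G(u, v) \cap V(G^1) \subseteq S \cap V(G^1)$ and $w_1 \in \psi(S)$; the subcase $w' = w_2$ is symmetric.

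The main technical point is the mixed-case distance identity $d_{G'}(u_1, v_2) = d_G(u, v) + 1$, which rests on the observation that $V(G^0)$ separates $V(G^1) \setminus V(G^2)$ from $V(G^2) \setminus V(G^1)$ in $G$. Once this is in place, the remainder is a routine chase through the definition of $\psi$.
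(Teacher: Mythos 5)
Your argument is correct, but note that the paper itself does not prove this lemma: it is imported verbatim as \cite[Lemma~6]{ChKnMa}, so there is no in-paper proof to compare against. Judged on its own, your proof is a valid, self-contained verification. The two distance identities are the right foundation and are proved correctly: the projection $x_i\mapsto x$ sends non-bridge edges to edges of $G$ and bridge edges to trivial steps, the parity of the number of bridges is determined by the copies of the endpoints, and the upper bound in the mixed case uses exactly the needed separation fact that no edge of $G$ joins $V(G^1)\setminus V(G^2)$ to $V(G^2)\setminus V(G^1)$ (such an edge would lie in neither $E(G^1)$ nor $E(G^2)$), so some vertex $x\in V(G^0)\cap I_G(u,v)$ exists; isometricity of $G^1,G^2$ then converts $d_{G^i}$ into $d_G$. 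The case analysis for convexity is complete once one also checks, as your identities immediately give, that in the same-copy case no vertex $w_2$ with $w\in V(G^2)$ can lie on $I_{G'}(u_1,v_1)$, since that would force $d_G(u,w)+d_G(w,v)=d_G(u,v)-2$.

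It is worth pointing out that the machinery already present in the paper yields a shorter route: the contraction $\pi_{m+1}$ of the new $\Theta$-class $E_{m+1}$ is the inverse of $\psi$, and since contractions map intervals into intervals of the contracted graph (cf.\ Lemma~\ref{contraction-ChKnMa}(ii) applied to $\conv(\{u,v\})=I(u,v)$), any $z'\in I_{G'}(u',v')$ with $u',v'\in\psi(S)$ satisfies $\pi_{m+1}(z')\in I_G(u,v)\subseteq S$ by convexity of $S$, and every preimage under $\pi_{m+1}$ of a vertex of $S$ belongs to $\psi(S)$ by definition of the expansion. Your approach trades this for an explicit computation of $d_{G'}$ in terms of $d_G$, which is more work but also more informative: the identities you establish are precisely the ones underlying Proposition~\ref{pc-expansion}, i.e.\ why isometric expansions of partial cubes are again partial cubes. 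Both routes are sound; yours has the advantage of not presupposing that $G'$ is a partial cube with $E_{m+1}$ as a $\Theta$-class.
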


\begin{example}
For partial cubes, the operation of isometric expansion can be viewed as the inverse to the operation of contraction of a $\Theta$-class. For example, the two-dimensional partial cube $M$ can be obtained from
the two-dimensional partial cube $M_*$ (see Figure \ref{fig:M-Theta-class}(b)) via an isometric expansion. In Figure~\ref{fig:isom_exp_M_*} we present another  isometric expansion $M'_*$ of $M_*$. By Proposition \ref{pc-expansion},
$M'_*$  is a partial cube but one can check that it is no longer  two-dimensional.
\end{example}

Therefore, contrary to  all partial cubes, the classes ${\mathcal F}(Q_{d+1})$ are not closed under arbitrary isometric
expansions. In this subsection, we characterize the isometric expansions which preserve the class ${\mathcal F}(Q_{d+1})$.
Let $G$ be isometrically embedded in the hypercube $Q_m=Q(X)$. Suppose that $G$ shatters the subset $Y$ of $X$. For a vertex $v_A$ of $Q(Y)$ (corresponding to a subset $A$ of $Y$), denote by $F(v_A)$ the set of vertices of the hypercube $Q_m$ which projects to $v_A$.
In set-theoretical language, $F(v_A)$ consists of all vertices $v_B$ of $Q(X)$ corresponding to subsets $B$ of $X$ such that $B\cap Y=A$. Therefore, $F(v_A)$ is a subcube of dimension $m-|Y|$ of $Q_m$. Let $G(v_A)=G\cap F(v_A)$.
Since $F(v_A)$ is a convex subgraph of $Q_m$ and $G$ is an isometric subgraph of $Q_m$, $G(v_A)$ is also an isometric subgraph of $Q_m$. Summarizing, we obtain the following property:

\begin{lemma} \label{intersection_with_faces} If $G$ is an isometric subgraph of $Q_m=Q(X)$ which shatters $Y\subseteq X$, then for any vertex $v_A$ of $Q(Y)$, $G(v_A)$ is a nonempty isometric subgraph of $G$.
\end{lemma}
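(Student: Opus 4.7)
The plan is to verify the two claims in sequence: nonemptiness of $G(v_A)$, and that its inclusion into $G$ is isometric.

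For nonemptiness, I would invoke the hypothesis that $Y$ is shattered by $G$. By the definition of shattering, for every $A \subseteq Y$ there is a vertex $v_B$ of $G$ (corresponding to some $B \subseteq X$) with $B \cap Y = A$. Such a $v_B$ lies in $F(v_A)$ by the set-theoretic description of $F(v_A)$ given just before the lemma, so $v_B \in G \cap F(v_A) = G(v_A)$.

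For the isometric part, the text already observes that $G(v_A)$ is an isometric subgraph of $Q_m$; I would make that one-line argument explicit and then transfer it to $G$. Concretely, $F(v_A)$ is a subcube of $Q_m$ of dimension $m-|Y|$, hence convex in $Q_m$. Fix $u,v \in G(v_A)$. Because $G$ is isometric in $Q_m$, there exists a $(u,v)$-path $P$ in $G$ of length $d_{Q_m}(u,v)$; every vertex of $P$ lies in the hypercube interval $I_{Q_m}(u,v)$, which by convexity of $F(v_A)$ is contained in $F(v_A)$. Thus $P$ lies in $G \cap F(v_A) = G(v_A)$, giving $d_{G(v_A)}(u,v) \le d_G(u,v) = d_{Q_m}(u,v)$, and the reverse inequality is automatic. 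Consequently $d_{G(v_A)}(u,v) = d_G(u,v)$, which is exactly the assertion that $G(v_A)$ is an isometric subgraph of $G$.

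There is no real obstacle here: the statement is essentially a packaging of two standard facts, namely that shattering produces a representative in every fiber and that the intersection of an isometric subgraph with a convex set of the ambient graph is again isometric. The only point to be slightly careful about is distinguishing ``isometric in $Q_m$'' from ``isometric in $G$'', which is handled automatically by the transitivity of the distance-preserving property through the chain $G(v_A) \subseteq G \subseteq Q_m$.
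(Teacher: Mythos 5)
Your proof is correct and follows the same route as the paper: the paper derives the lemma from the observation (made in the paragraph preceding its statement) that $F(v_A)$ is a convex subcube of $Q_m$, so its intersection with the isometric subgraph $G$ is isometric, while nonemptiness is exactly the definition of shattering. Your write-up merely makes the convexity/shortest-path argument explicit, which is a faithful expansion of the paper's one-line justification.
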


The following lemma establishes an interesting separation property in partial cubes:

\begin{lemma} \label{isometric cover} If $(G^1,G^0,G^2)$ is an isometric cover of an isometric subgraph $G$ of $Q_m=Q(X)$ and $G^1$ and $G^2$ shatter the same subset $Y$  of $X$, then $G^0$ also shatters $Y$.
\end{lemma}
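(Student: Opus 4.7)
The plan is to fix an arbitrary subset $A\subseteq Y$ and show, via Lemma \ref{intersection_with_faces}, that the set $G^0(v_A):=G^0\cap F(v_A)$ is nonempty; doing this for every $A\subseteq Y$ exactly says that $G^0$ shatters $Y$. A first easy observation is that
\[
V(G^0(v_A)) \;=\; V(G^0)\cap F(v_A) \;=\; V(G^1)\cap V(G^2)\cap F(v_A) \;=\; V(G^1(v_A))\cap V(G^2(v_A)),
\]
so it suffices to prove that the two isometric subgraphs $G^1(v_A)$ and $G^2(v_A)$ of $G$ share a vertex. By hypothesis, $G^1$ and $G^2$ shatter $Y$, so Lemma \ref{intersection_with_faces} (applied to each of $G^1$ and $G^2$, each of which is an isometric subgraph of $Q_m$) guarantees that $G^1(v_A)$ and $G^2(v_A)$ are both nonempty.

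To force an intersection, I would use connectivity. Since $F(v_A)$ is a convex subcube of $Q_m$ and $G$ is an isometric subgraph of $Q_m$, the graph $G(v_A)=G\cap F(v_A)$ is a convex, hence connected, subgraph of $G$; moreover $V(G(v_A)) = V(G^1(v_A))\cup V(G^2(v_A))$ and $E(G(v_A))\subseteq E(G^1)\cup E(G^2)$ by definition of an isometric cover. Suppose for contradiction that $V(G^1(v_A))\cap V(G^2(v_A))=\varnothing$. Since both parts are nonempty and $G(v_A)$ is connected, there exists an edge $uv$ of $G(v_A)$ with $u\in V(G^1(v_A))\setminus V(G^2(v_A))$ and $v\in V(G^2(v_A))\setminus V(G^1(v_A))$. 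This edge lies in $E(G^1)$ or in $E(G^2)$; assume by symmetry that $uv\in E(G^1)$. Then $v\in V(G^1)$ in addition to $v\in V(G^2)$ and $v\in F(v_A)$, so $v\in V(G^0(v_A))\subseteq V(G^1(v_A))$, contradicting $v\notin V(G^1(v_A))$.

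Hence $V(G^0(v_A))$ is nonempty for every $A\subseteq Y$, which is exactly the statement that $G^0$ shatters $Y$. The main (minor) obstacle I foresee is making sure one cleanly justifies that $G(v_A)$ is connected with the correct vertex and edge partition across $G^1(v_A)$ and $G^2(v_A)$; this is where the two hypotheses that $G$ is isometric in $Q_m$ (to get convexity, hence connectedness, of $G(v_A)$) and that $(G^1,G^0,G^2)$ is an isometric cover (to control edges) both enter. Everything else is bookkeeping.
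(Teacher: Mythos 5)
Your proof is correct and follows essentially the same route as the paper: both reduce to showing $G^0\cap F(v_A)\neq\varnothing$ for each $A\subseteq Y$, both invoke Lemma \ref{intersection_with_faces} to get that $G(v_A)$ is a (connected) isometric subgraph containing points of $G^1$ and of $G^2$, and both then use the cover condition $V(G)=V(G^1)\cup V(G^2)$, $E(G)=E(G^1)\cup E(G^2)$ to force a vertex of $G^0$ inside $F(v_A)$. The paper phrases the last step as ``a shortest $(x,y)$-path in $G(v_A)$ must contain a vertex of $G^0$'' while you phrase it as a crossing-edge/connectivity argument, but these are the same argument; your version just makes the implicit justification explicit.
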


\begin{proof} To prove that $G^0$ shatters $Y$ it suffices to show that for any vertex $v_A$ of $Q(Y)$, $G^0\cap F(v_A)$ is nonempty.  Since $G^1$ and $G^2$ both shatter $Q(Y)$, $G^1\cap F(v_A)$ and $G^2\cap F(v_A)$
are nonempty subgraphs of $G$. Pick any vertices $x\in V(G^1\cap F(v_A))$ and $y\in V(G^2\cap F(v_A))$. Then $x$ and $y$ are vertices of $G(v_A)$. Since by Lemma \ref{intersection_with_faces}, $G(v_A)$ is an isometric subgraph of $Q_m$,
there exists a shortest path $P(x,y)$ of $Q_m$ belonging to $G(v_A)$. Since $(G^1,G^0,G^2)$ is an isometric cover of $G$, $P(x,y)$ contains a vertex $z$ of $G^0$. Consequently, $z\in V(G^0\cap F(v_A))$, and we are done.
\end{proof}

\begin{proposition} \label{expansion-Qd+1} Let $G'$ be obtained from 
$G\in {\mathcal F}(Q_{d+1})$ by an isometric expansion with respect to $(G^1,G^0,G^2)$. Then $G'$ belongs to ${\mathcal F}(Q_{d+1})$ if and only if  $G^0$ has VC-dimension $\le d-1$.
\end{proposition}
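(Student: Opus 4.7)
The plan is to use Lemma~\ref{VCdim_d} (so that belonging to $\mathcal{F}(Q_{d+1})$ is equivalent to having VC-dimension $\le d$) and to pass between the shattered sets of $G'$ and those of the three subgraphs $G^1,G^0,G^2$ via the structure of the expansion. The key observation is that if we view $G'$ as isometrically embedded in $Q_{m+1}=Q(X\cup\{0\})$, where $0$ is the coordinate labelling the new $\Theta$-class $E_0$ introduced by $\psi$, then the vertices of $\psi(G^1)$ are exactly those with coordinate $0$ equal to one fixed sign, the vertices of $\psi(G^2)$ those with the opposite sign, and the hyperplane $H_0$ of $E_0$ is isomorphic to $G^0$. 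The contraction $\pi_0(G')$ is precisely $G$, so coordinates indexed by $X$ are preserved under $\pi_0$.

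For the forward direction I would argue directly: assume $G'\in\mathcal{F}(Q_{d+1})$ and suppose for contradiction that $G^0$ shatters a set $Z\subseteq X$ with $|Z|=d$. Then for every $A\subseteq Z$ the subgraph $G^0$ (and hence both $\psi(G^1)$ and $\psi(G^2)$) contains a vertex whose $Z$-coordinates realize $A$; combining these with the two possible values of the coordinate $0$ shows that $G'$ shatters $Z\cup\{0\}$, contradicting $\vcd(G')\le d$. Equivalently, one can invoke Proposition~\ref{VCdim_pc} and observe that $G^0\cong H_0$ is a hyperplane of $G'$.

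For the backward direction, assume $\vcd(G^0)\le d-1$ and suppose for contradiction that $G'$ shatters some $Y\subseteq X\cup\{0\}$ of size $d+1$. I would split on whether $0\in Y$. If $0\notin Y$, then $Y\subseteq X$ and, since $G=\pi_0(G')$ preserves all coordinates in $X$, the image of any vertex of $G'$ realizing a pattern on $Y$ produces a vertex of $G$ realizing the same pattern, so $G$ shatters $Y$, contradicting $G\in\mathcal{F}(Q_{d+1})$. If $0\in Y$, write $Y=\{0\}\cup Z$ with $|Z|=d$; shattering $Y$ forces both $\psi(G^1)$ and $\psi(G^2)$ to contain, for every $A\subseteq Z$, a vertex with $Z$-coordinates equal to $A$, hence both $G^1$ and $G^2$ shatter $Z$. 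Now Lemma~\ref{isometric cover} applies to the isometric cover $(G^1,G^0,G^2)$ of $G$ and yields that $G^0$ also shatters $Z$, contradicting $\vcd(G^0)\le d-1$.

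The main obstacle, and the reason the statement is not entirely formal, is handling the case $0\in Y$ in the backward direction: shattering a coordinate set containing the ``expansion coordinate'' $0$ is exactly what forces $G^0$ to serve as a common witness for both halves, and this is not a direct consequence of $G\in\mathcal{F}(Q_{d+1})$ but requires the separation property of Lemma~\ref{isometric cover}. Everything else is bookkeeping about how coordinates of $Q_{m+1}$ interact with the two copies $\psi(G^1),\psi(G^2)$ and the new $\Theta$-class $E_0$.
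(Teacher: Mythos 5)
Your proposal is correct and follows essentially the same route as the paper: the forward direction is the hyperplane observation (Proposition~\ref{VCdim_pc} applied to the new $\Theta$-class, which you also note), and the backward direction reduces, exactly as in the paper, to the case where the shattered $(d+1)$-set contains the expansion coordinate and then invokes Lemma~\ref{isometric cover} to transfer shattering from $G^1$ and $G^2$ to $G^0$. No substantive differences.
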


\begin{proof} The fact that $G'$ is a partial cube follows from Proposition \ref{pc-expansion}. Let $E_{m+1}$ be the unique $\Theta$-class of $G'$ which does not exist in $G$. Then the halfspaces $(G')^-_{m+1}$ and $(G')^+_{m+1}$ of $G'$ are isomorphic to $G^1$ and $G^2$ and their boundaries $\partial (G')^-_{m+1}$ and $\partial (G')^+_{m+1}$ are isomorphic to $G^0$. If $G'$ belongs to ${\mathcal F}(Q_{d+1})$, by Proposition \ref{VCdim_pc} necessarily $G^0$ has VC-dimension $\le d-1$.

Conversely, let $G^0$ be of VC-dimension $\le d-1$. Suppose 
that $G'$ has VC-dimension $d+1$. Since $G$ has VC-dimension $d$, this implies that any set $Y'$ of size $d+1$ shattered by $G'$ contains the element $m+1$.
Let $Y=Y'\setminus \{ m+1\}$. 
The $(m+1)$th halfspaces $(G')^-_{m+1}$ and $(G')^+_{m+1}$ of $G'$ shatter the set $Y$. Since $(G')^-_{m+1}$ and $(G')^+_{m+1}$ are isomorphic to $G^1$ and $G^2$, the subgraphs $G^1$ and $G^2$ of $G$ both shatter $Y$.
By Lemma \ref{isometric cover}, the subgraph $G^0$ of $G$ also shatters $Y$. Since $|Y|=d$, this contradicts our assumption that $G^0$ has VC-dimension $\le d-1$.
\end{proof}

Let us end this section with a useful lemma with respect to antipodal partial cubes:

\begin{lemma}\label{lem:antipodal}
 If $G$ is a  proper convex subgraph of an antipodal partial cube $H\in\mathcal{F}(Q_{d+1})$, then $G\in\mathcal{F}(Q_{d})$.
\end{lemma}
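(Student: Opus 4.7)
The plan is to prove this via contradiction using the VC-dimension reformulation of Lemma~\ref{VCdim_d}: showing $G\in\mathcal{F}(Q_d)$ is equivalent to showing $\vcd(G)\le d-1$. I would embed $H$ isometrically in $Q_m$. Since $G$ is a \emph{proper} convex subgraph of $H$ and every convex subgraph of a partial cube is the intersection of its enclosing halfspaces (as used in Subsection~\ref{minors}), there exists at least one $\Theta$-class $E_i$ of $H$ such that $G\subseteq H^-_i$. This distinguished coordinate $i$ will be the ``extra'' coordinate used to enlarge a shattered set.

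Next, I would assume for contradiction that $\vcd(G)\ge d$, so $G$ shatters some $Y\subseteq [m]$ with $|Y|=d$. Since every vertex of $G$ lies in $H^-_i$, the coordinate $i$ is not shattered by $G$, so $i\notin Y$. For each $A\subseteq Y$ pick a vertex $v^A\in G$ whose projection on $Y$ equals $A$; by construction $v^A\in H^-_i$. The key ingredient is then the standard (and easy) observation that in an antipodal partial cube, an antipode $-v$ of $v$ must differ from $v$ in \emph{every} coordinate: if some $\Theta$-class $E_j$ did not separate $v$ from $-v$, both would lie in the same halfspace, hence the interval $I(v,-v)=\conv(v,-v)$ would also lie there, contradicting $H=\conv(v,-v)$. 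Applying this to each $v^A$, the antipode $-v^A\in H$ satisfies $\varphi(-v^A)=[m]\setminus\varphi(v^A)$, so its projection on $Y$ is $Y\setminus A$ and its $i$-coordinate is $+$.

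Finally, I would combine the two families: the vertices $\{v^A:A\subseteq Y\}$ realize all $2^d$ projections on $Y$ with $i$-coordinate $-$, while $\{-v^A:A\subseteq Y\}$ realize (as $A$ ranges over all subsets of $Y$, so does $Y\setminus A$) all $2^d$ projections on $Y$ with $i$-coordinate $+$. Hence $H$ shatters $Y\cup\{i\}$, a set of size $d+1$, contradicting $H\in\mathcal{F}(Q_{d+1})$ via Lemma~\ref{VCdim_d}.

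I do not expect a serious obstacle here: the argument is essentially ``use antipodality to double a shattered set by the coordinate of an outer halfspace.'' The only point that requires care is making sure the distinguished $\Theta$-class $E_i$ exists (which uses the intersection-of-halfspaces characterization of convex subgraphs together with properness) and citing the fact that antipodes in antipodal partial cubes differ in every coordinate, which is immediate from the definition and the convexity of halfspaces.
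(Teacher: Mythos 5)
Your proof is correct, and it takes a genuinely different technical route from the paper's. The paper argues in the language of pc-minors: it finds the same distinguished class $E_i$ with $G\subseteq H_i^+$, observes that the antipodal image $-G$ lies in $H_i^-$ and carries the same $Q_d$-minor, then contracts all $\Theta$-classes except $E_i$ and the $d$ classes of that minor, invoking Lemma~\ref{antipodal-contraction} (antipodality is preserved under contractions) to obtain an antipodal graph $H'$ made of two copies of $Q_d$ joined across $E_i$; a final path argument from $v$ to $-v$ shows every vertex of $H'$ meets $E_i$, forcing $H'=Q_{d+1}$. You instead translate everything into shattering via Lemma~\ref{VCdim_d}: take a $d$-set $Y$ shattered by $G$, note $i\notin Y$ because $G$ is constant on coordinate $i$, and use the elementary fact that the antipodal map complements every coordinate to produce the missing $2^d$ traces and shatter $Y\cup\{i\}$ in $H$. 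Your version is more elementary and self-contained --- it needs neither the contraction machinery nor the preservation of antipodality under contractions, and it replaces the paper's somewhat informal final step (arguing that two $d$-cubes joined by a perfect matching in $E_i$ inside an antipodal graph must form $Q_{d+1}$) by an explicit enumeration of shattered traces. What the paper's version buys is uniformity of language with the surrounding sections, which are phrased throughout in terms of pc-minors and contractions. Both proofs rest on the same two pillars: a proper convex subgraph lies inside a proper halfspace, and the antipodal map swaps the two sides of every $\Theta$-class. The only point worth making explicit in your write-up is that the ``antipodes differ in every coordinate'' step presumes the canonical embedding of $H$ in $Q_m$ in which every coordinate is an actual $\Theta$-class of $H$; with the paper's standing conventions this is automatic.
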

\begin{proof}
 Suppose by way of contradiction that $G$ has $Q_d$ as a pc-minor. Since convex subgraphs of $H$ are intersections of halfspaces, there exists a $\Theta$-class $E_i$ of $H$ such that $G$ is included in the halfspace $H_i^+$. Since $H$ is antipodal, the subgraph $-G \subseteq H_i^-$ consisting of antipodes of vertices of $G$ is isomorphic to $G$. As $G\subseteq H^+_i$, $-G$ and $G$ are disjoint. Since $G$ has $Q_d$ as a pc-minor,  $-G$ also has $Q_{d}$ as a pc-minor: both those minors are obtained by  contracting the same set $I$ of $\Theta$-classes of $H$; note that $E_i\notin I$. Thus, contracting the $\Theta$-classes from $I$ and all other $\Theta$-classes not crossing the $Q_d$ except $E_i$, we will get an antipodal graph $H'$, since antipodality is preserved by contractions. Now, $H'$ consists of two copies of $Q_d$ separated by $E_i$. Take any vertex $v$ in $H'$. Then there is a path from $v$ to $-v$ first crossing all $\Theta$-classes of the cube containing $v$ and then $E_i$, to finally reach $-v$. Thus, $-v$ is adjacent to $E_i$ and hence every vertex of $H'$ is adjacent to $E_i$.
 Thus $H'=Q_{d+1}$, contrary to the assumption that $H\in\mathcal{F}(Q_{d+1})$.
\end{proof}


\section{Gated hulls of 6-cycles}  In this section, we prove that in two-dimensional partial cubes the gated hull of any 6-cycle $C$ is either $C$, or  $Q^-_3$, or a
maximal full subdivision of $K_n$. 


\subsection{Full subdivisions of $K_n$}
A {\it full subdivision of $K_n$} (or {\it full subdivision} for short) is the graph $SK_n$ obtained from the complete graph $K_n$ on $n$ vertices by subdividing
each edge of $K_n$ once; $SK_n$ has $n+\binom{n}{2}$ vertices and $n(n-1)$ edges. The $n$ vertices of $K_n$ are called {\it original} vertices of $SK_n$ and
the new vertices are called {\it subdivision} vertices. Note that $SK_3$ is the 6-cycle $C_6$. 
Each $SK_n$ can be isometrically embedded into the $n$-cube $Q_n$ in such a way that each original vertex $u_i$ is encoded by the one-element set $\{ i\}$  and each vertex $u_{i,j}$ subdividing the edge $ij$ of $K_n$
is encoded by the 2-element set $\{ i,j\}$ (we call this embedding of $SK_n$ a {\it standard embedding}). If we add to $SK_n$ the vertex $v_{\varnothing}$ of $Q_n$ which corresponds to the empty set $\varnothing$,
we will obtain the partial cube $SK^*_n$. Since both $SK_n$ and $SK^*_n$
are encoded by subsets of size $\le 2$, those graphs have VC-dimension 2. Consequently, we obtain:

\begin{lemma} \label{SKn} For any $n$, $SK_n$ and $SK^*_n$ are two-dimensional partial cubes.
\end{lemma}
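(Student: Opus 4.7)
The plan is to verify that the standard embedding described in the preceding paragraph is in fact an isometric embedding of $SK_n$ (resp.\ $SK_n^*$) into $Q_n$, and then to observe that the resulting set family has VC-dimension at most $2$; by Lemma~\ref{VCdim_d} this places both graphs in $\mathcal{F}(Q_3)$, as required.

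For the isometric embedding, I first note that the edges of $SK_n$ match exactly the Hamming-distance-$1$ pairs among its codes: the edge $u_iu_{i,j}$ corresponds to $\{i\}\subset\{i,j\}$, and no other pair of codes differs in a single coordinate. Hence it suffices to check that the graph distance between any two vertices equals the Hamming distance of their codes. I would organise the cases by endpoint types. Two original vertices $u_i, u_j$ (with $i\ne j$) have graph distance $2$ via $u_{i,j}$, and $|\{i\}\Delta\{j\}|=2$. An original vertex $u_i$ and a subdivision vertex $u_{j,k}$ are at graph distance $1$ when $i\in\{j,k\}$ and at distance $3$ otherwise (via a path of the form $u_i, u_{i,j}, u_j, u_{j,k}$), matching the Hamming distances $1$ and $3$. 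Two subdivision vertices $u_{i,j}, u_{k,l}$ are at distance $2$ when $\{i,j\}\cap\{k,l\}\ne\varnothing$ and at distance $4$ otherwise, again matching the Hamming distance. For $SK_n^*$, the extra vertex $v_\varnothing$ is adjacent in the induced subgraph precisely to the $u_i$'s, giving distances $1$ to each $u_i$ and $2$ to each $u_{i,j}$, in agreement with $|\varnothing\Delta\{i\}|=1$ and $|\varnothing\Delta\{i,j\}|=2$; moreover the addition of $v_\varnothing$ introduces no shorter paths between previously considered pairs, since any detour through $v_\varnothing$ has length at least as large as the already optimal path within $SK_n$.

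Finally, every code used has cardinality at most $2$, so no $3$-element subset $Y$ of $[n]$ can be shattered: shattering $Y$ would in particular require some code $S$ with $Y\subseteq S$, forcing $|S|\ge 3$. Thus $\vcd(SK_n)\le 2$ and $\vcd(SK_n^*)\le 2$, and Lemma~\ref{VCdim_d} identifies these bounds with membership in $\mathcal{F}(Q_3)$. The only step requiring care is the bookkeeping in the case analysis for distances; the VC-dimension argument itself is immediate from the size restriction on the codes.
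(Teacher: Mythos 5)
Your proposal is correct and follows essentially the same route as the paper: the paper introduces the standard embedding by codes of size at most $2$, asserts its isometricity, and deduces VC-dimension at most $2$ (hence membership in $\mathcal{F}(Q_3)$ via Lemma~\ref{VCdim_d}), exactly as you do. Your case analysis of the distances merely spells out the isometricity claim that the paper leaves to the reader.
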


\begin{figure}[htb]
\centering
\includegraphics[width=.70\textwidth]{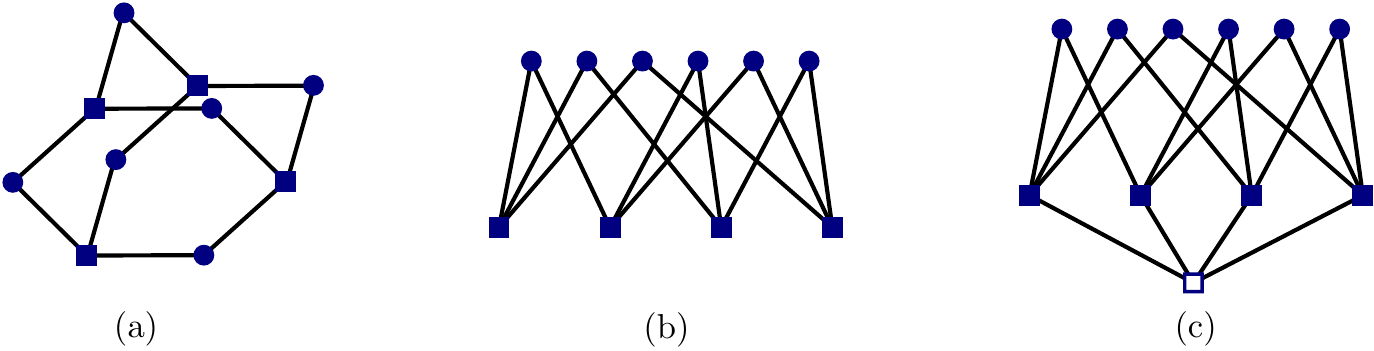}
\caption{(a) An isometric embedding of $SK_4$ into $Q_4$. (b) A standard embedding of $SK_4$. (c) A completion of $SK_4$ to $SK^*_4$.}
\label{fig:representation_SK_4}
\end{figure}

\begin{example} Our running example $M$ contains two isometrically embedded copies of $SK_4$. In Figure~\ref{fig:representation_SK_4}(a)\&(b)
we present two isometric embeddings of $SK_4$ into the 4-cube $Q_4$, the second one is the standard embedding of $SK_4$.  The
original and subdivision vertices are illustrated by squares and circles, respectively. Figure~\ref{fig:representation_SK_4}(c)
describes the completion of $SK_4$ to $SK^*_4$.
\end{example}

\begin{lemma} \label{standardSKn} If $H=SK_n$ with $n \ge 4$ is an isometric subgraph of a partial cube $G$, then $G$ admits an isometric embedding  into a hypercube such
that the embedding of $H$ is standard.
\end{lemma}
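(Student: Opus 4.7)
The plan is to identify the $n$ $\Theta$-classes of $H=SK_n$ inside $G$, extend them to $n$ pairwise distinct $\Theta$-classes of $G$, and then choose a compatible orientation of every $\Theta$-class of $G$ so that the induced isometric embedding of $G$ into a hypercube $Q_m$ restricts on $H$ to the standard embedding of $SK_n$ into the subcube spanned by the $n$ relevant coordinates.

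The first key step is the observation that since $H$ is isometric in $G$, the Djokovi\'c--Winkler relation on the edges of $H$ induced from $G$ coincides with $\Theta_H$. Indeed, the condition $e\Theta e'$ for $e=uv$ and $e'=u'v'$ is equivalent to $d(u,u')+d(v,v')\neq d(u,v')+d(v,u')$, and isometry of $H$ in $G$ means the two distance functions agree on vertices of $H$. Consequently, the $n$ $\Theta$-classes $E_1,\ldots,E_n$ of $H$ extend to $n$ pairwise distinct $\Theta$-classes $F_1,\ldots,F_n$ of $G$ with $F_i\cap E(H)=E_i$; the remaining $\Theta$-classes $F_{n+1},\ldots,F_m$ of $G$ do not cross $H$, so by connectivity of $H$ each of them has $V(H)$ contained in a single halfspace.

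Next, orient the $\Theta$-classes of $G$ as follows. For $i\le n$, choose $G^+_i$ to be the halfspace of $F_i$ containing the original vertex $u_i$ of $H$. For $j>n$, choose $G^-_j$ to be the halfspace of $F_j$ containing $V(H)$. Let $\varphi$ be the associated isometric embedding $\varphi(v)=\{k:v\in G^+_k\}$ of $G$ into $Q_m$. By construction $\varphi(v)\subseteq\{1,\ldots,n\}$ for every $v\in V(H)$, so it suffices to verify that on $H$ the restriction to the first $n$ coordinates matches the standard embedding.

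This reduces to computing, inside $H=SK_n$ itself, the halfspace of $E_i$ that contains $u_i$: a direct distance calculation in $SK_n$ shows it is exactly $\{u_i\}\cup\{u_{i,k}:k\neq i\}$. Therefore $\varphi(u_i)=\{i\}$ and $\varphi(u_{i,j})=\{i,j\}$, which is the standard embedding. The main content of the argument is the first step (passing $\Theta$-classes back and forth between $H$ and $G$ via isometry); the remaining steps are routine orientation bookkeeping, and the hypothesis $n\ge 4$ plays no role beyond the set-up of subsequent lemmas.
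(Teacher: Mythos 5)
Your proof is correct, but it reaches the conclusion by a somewhat different route than the paper. The paper fixes an original vertex $b$ of $H$ as the basepoint of the canonical embedding $\varphi$ (so $\varphi(b)=\varnothing$), computes the labels of all vertices of $H$ directly from distances --- the subdivision vertices adjacent to $b$ receive singletons $\{i\}$, and an argument using the $n-1\ge 3$ remaining original vertices (this is where $n\ge 4$ enters) shows each of them is labelled $\{i,n\}$ for one common new element $n$ --- and then shifts the whole embedding by $\Delta\{n\}$. You instead transfer the $n$ $\Theta$-classes of $SK_n$ to $n$ distinct $\Theta$-classes of $G$ using the isometry of $H$, and exploit the fact that an isometric embedding into a hypercube is determined by the $\Theta$-classes up to a free choice of orientation of each class (equivalently, up to a symmetric difference with a fixed set, which is exactly the paper's final shift); choosing $G_i^+\ni u_i$ for the classes meeting $H$ and $G_j^-\supseteq V(H)$ for the rest then yields the standard labels after the easy computation of the halfspaces of $E_i$ inside $SK_n$. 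Your version is more structural: it replaces the paper's ``common element'' distance computation by the standard (but worth stating) fact that $\Theta_G$ restricted to the edges of an isometric subgraph coincides with the subgraph's own $\Theta$-relation, and, as you observe, it does not use $n\ge 4$ at all, so it applies verbatim to $SK_3=C_6$. The one step you might spell out is why $V(H)\cap G_i^+$ is precisely the $E_i$-halfspace of $H$ containing $u_i$: since $F_i\cap E(H)=E_i$ and $H$ is connected, the trace of $\{G_i^+,G_i^-\}$ on $V(H)$ is the unique bipartition of $V(H)$ whose crossing edges are exactly $E_i$, i.e., the halfspace bipartition of $E_i$ in $H$.
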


\begin{proof} Pick any original vertex of $H$ as the base point $b$ of $G$ and consider the standard isometric embedding $\varphi$ of $G$ into $Q_m$. Then $\varphi(b)=\varnothing$.
In $H$ the vertex $b$ is adjacent to $n-1\ge 3$  subdivision vertices of $H$.
Then for each of those vertices $v_i, i=1,\ldots,n-1,$ we can suppose that $\varphi(v_i)=\{ i\}$. Each $v_i$ is adjacent in $H$ to an original vertex $u_i\ne b$. Since $H$
contains at least three such original vertices and they have pairwise distance 2, one can easily check that the label $\varphi(u_i)$ consists of $i$ and an element common to all such vertices, denote it by $n$.
Finally, the label of any subdivision vertex $u_{i,j}$ adjacent to the original vertices $u_i$ and $u_j$ is $\{ i,j\}$.
Now consider an isometric embedding $\varphi'$ of $G$ defined by setting $\varphi'(v)=\varphi(v)\Delta \{ n\}$ for any vertex $v$ of $G$. Then $\varphi'$ provides a standard embedding
of $H$: $\varphi'(b)=\{ n\},$ $\varphi'(u_i)=\{ i\}$ for any original vertex $u_i$, and $\varphi'(v_i)=\{ i,n\}$ for any subdivision vertex $v_i$ adjacent to $b$ and $\varphi'(u_{i,j})=\{ i,j\}$
for any other subdivision vertex $u_{i,j}$.
\end{proof}

By Lemma \ref{standardSKn}, when a full subdivision $H=SK_n$ of a graph $G\in {\mathcal F}(Q_3)$ is fixed, we  assume that $G$ is isometrically embedded in a hypercube so that $H$ is standardly embedded.

We describe next the isometric expansions of $SK_n$ which result in two-dimensional partial cubes. An isometric expansion of a partial cube $G$ with respect to $(G^1,G^0,G^2)$ is called {\it peripheral} if at least one of the subgraphs $G^1,G^2$ coincides with $G^0$, i.e., $G^1\subseteq G^2$ or $G^2\subseteq G^1$.

\begin{lemma} \label{expansionSKn} If $G'$ is obtained from $G:=SK_n$ with $n \ge 4$ by an isometric expansion with respect to $(G^1,G^0,G^2)$, then $G'\in {\mathcal F}(Q_3)$ if and only if this is a peripheral expansion and $G^0$
is an isometric tree of $SK_n$.
\end{lemma}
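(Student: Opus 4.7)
The plan is to invoke Proposition \ref{expansion-Qd+1}: since $G = SK_n \in {\mathcal F}(Q_3)$ by Lemma \ref{SKn}, we have $G' \in {\mathcal F}(Q_3)$ if and only if $\vcd(G^0) \leq 1$; by Proposition \ref{virtual_isometric_tree}, this in turn is equivalent to $G^0$ being a virtual isometric tree of $Q_n$, which in particular forces $G^0$ to be acyclic.

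For sufficiency, I would assume the expansion is peripheral, say $G^1 = G$ and $G^2 = G^0$ is an isometric tree of $SK_n$. The isometric cover conditions for $(G, G^0, G^0)$ are immediate. Since $SK_n$ embeds isometrically in $Q_n$, the tree $G^0$ is also isometric in $Q_n$, so $\vcd(G^0) = 1$ by Proposition \ref{virtual_isometric_tree}, and Proposition \ref{expansion-Qd+1} yields $G' \in {\mathcal F}(Q_3)$.

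For necessity, assume $G' \in {\mathcal F}(Q_3)$, so $G^0$ is acyclic. The central task is to prove peripherality; once established, $G^0$ coincides with $G^1$ or $G^2$ and hence, being isometric in $SK_n$, is automatically connected (isometric subgraphs must be connected for distances to be finite) and therefore an isometric tree of $SK_n$. Suppose, for contradiction, that both $A := V(G^1) \setminus V(G^2)$ and $B := V(G^2) \setminus V(G^1)$ are nonempty, and pick $a \in A$, $b \in B$. Two ingredients drive the argument. First, a rigidity observation: if an isometric subgraph $H$ of $SK_n$ omits an original vertex $\{k\}$, then $V(H)$ contains at most one subdivision vertex of the form $\{k,\ell\}$, because any two such subdivision vertices are at distance $2$ in $SK_n$ realized only by the path through $\{k\}$. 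Second, Lemma \ref{isometric cover}, asserting that any $2$-subset of $[n]$ shattered by both $G^1$ and $G^2$ is shattered by $G^0$.

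A case analysis on whether $a$ and $b$ are original or subdivision vertices, combined with the cover conditions $V(G^1) \cup V(G^2) = V(SK_n)$, $E(G^1) \cup E(G^2) = E(SK_n)$, and the hypothesis $n \geq 4$, produces in each case either a $6$-cycle of $SK_n$ contained in $V(G^0)$ (contradicting acyclicity of $G^0$) or a $2$-subset of $[n]$ shattered by both $G^1$ and $G^2$ (contradicting $\vcd(G^0) \leq 1$ via Lemma \ref{isometric cover}). The main technical obstacle will be to organize this case analysis uniformly across the types of $a,b$ and small values of $n$: when $n = 4$ and both $a,b$ are original vertices, the direct ``find a 6-cycle in $V(G^0)$'' strategy is too restrictive since only two original vertices can remain in $V(G^0)$, and one must instead use the rigidity observation together with Lemma \ref{isometric cover} to pin down a jointly shattered $2$-subset.
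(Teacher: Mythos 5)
Your skeleton coincides with the paper's: both directions are funnelled through Proposition~\ref{expansion-Qd+1} and Proposition~\ref{virtual_isometric_tree}, sufficiency is immediate from Lemma~\ref{SKn}, and necessity reduces to proving that the cover is peripheral, after which $G^0$ equals $G^1$ or $G^2$ and is therefore a connected, acyclic, isometric subgraph of $SK_n$, i.e.\ an isometric tree. Your ``rigidity observation'' (two subdivision vertices $u_{k,\ell},u_{k,m}$ have $u_k$ as their unique common neighbour, so an isometric subgraph of $SK_n$ containing both must contain $u_k$) is correct and is precisely the mechanism the paper invokes at the end of its second case.

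The genuine gap is that the heart of the lemma --- showing that $A=V(G^1)\setminus V(G^2)$ and $B=V(G^2)\setminus V(G^1)$ cannot both be nonempty --- is not carried out. You assert that a case analysis on the types of $a\in A$, $b\in B$ ``produces in each case'' one of two contradictions, and you yourself flag its organization as an unresolved obstacle; that analysis \emph{is} the lemma, so what you have is a plan, not a proof. Moreover your announced dichotomy of terminal contradictions (a $6$-cycle inside $V(G^0)$, or a $2$-set shattered by both $G^1$ and $G^2$ fed into Lemma~\ref{isometric cover}) is not exhaustive: for instance if $u_1,u_3,u_4\in A$ and $u_2\in B$, the edge conditions force $u_{1,2},u_{2,3},u_{2,4}$ into $G^0\subseteq G^1$ while $u_2\notin G^1$, and the contradiction is a violation of isometricity of $G^1$ via your rigidity observation --- neither of your two listed outcomes need occur there. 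The paper avoids these difficulties by casing instead on the number of original vertices lying in $G^0$: if $G^0$ contains two originals $u_i,u_j$, then $u_{i,j}\in G^0$, and a third original in $G^0$, or a pair $u_k\in A$, $u_\ell\in B$ (which forces $u_{k,\ell}\in G^0$), gives four vertices of $G^0$ shattering $\{i,j\}$, whence all remaining originals lie on one side and $G^1=G$; if $G^0$ contains at most one original, then $\min(|A|,|B|)\ge 2$ would put $u_{1,3},u_{1,4},u_{2,3},u_{2,4}$ in $G^0$, shattering $\{1,3\}$, so one side has at most one original and the rigidity argument finishes. In particular every contradiction is found directly inside $G^0$ or from isometricity of $G^1$; neither Lemma~\ref{isometric cover} nor a $6$-cycle is needed. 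You should either adopt such an organization or write out your $(a,b)$-type analysis in full for the proof to be complete.
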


\begin{proof} The fact that an isometric expansion  of $SK_n$, such that $G^0$ is an isometric tree, belongs to ${\mathcal F}(Q_3)$ follows from Proposition \ref{expansion-Qd+1} and Lemma \ref{SKn}. Conversely,  suppose that
$G'$ belongs to ${\mathcal F}(Q_3)$. By Proposition \ref{expansion-Qd+1}, $G^0$  has VC-dimension $\le 1$ and by Proposition \ref{virtual_isometric_tree} $G^0$ is a virtual tree. It suffices to prove that $G^1$ or $G^2$
coincides with $G^0$. Indeed, since $G^1$ and $G^2$ are isometric subgraphs of $SK_n$, this will also imply that $G^0$ is an isometric tree.
We distinguish two cases.

\medskip\noindent
{\bf Case 1.}
First, let $G^0$ contain two original vertices $u_{i}$ and $u_{j}$.
Since $u_{i}$ and $u_{j}$ belong to $G^1$ and $G^2$ and those two subgraphs are isometric subgraphs of $G$, the unique common neighbor $u_{i,j}$ of $u_{i}$ and $u_{j}$
must belong to $G^1$ and $G^2$, and thus to $G^0$. If another original vertex $u_{k}$ belongs to $G^0$, then  the four vertices $u_{i,j}, u_{i}, u_{j},u_{k}$ of $G^0$
shatter the set $\{ i,j\}$, contrary to the assumption that $G^0$  has VC-dimension $\le 1$ (Proposition \ref{expansion-Qd+1}). This implies that each other original
vertex  $u_{k}$ either belongs to $G^1\setminus G^2$ or to $G^2\setminus G^1$.
If there exist original vertices $u_{k}$ and $u_{\ell}$ such that $u_{k}$ belongs to $G^1\setminus G^2$ and $u_{\ell}$ belongs to $G^2\setminus G^1$, then their unique common neighbor $u_{k,\ell}$ necessarily belongs to $G^0$. But in this case the
four vertices $u_{i,j}, u_{i}, u_{j},u_{k,\ell}$ of $G^0$ shatter the set $\{ i,j\}$. Thus we can suppose that all other original vertices $u_{k}$ belong to $G^1\setminus G^2$.
Moreover, for the same reason and since $G^1$ is an isometric subgraph of $G$, any vertex $u_{k,\ell}$ with $\{ k,\ell\}\ne \{ i,j\}$ also belongs to $G^1\setminus G^2$.
Since $G^1$ is an isometric subgraph of $G$, for any $k\ne i,j$, the vertices $u_{i,k}, u_{j,k}$ belong to $G^1$. Therefore $G^1=G$ and $G^0=G^2$. Since $G^2$ is an isometric subgraph of $G$ and $G^0$  has
VC-dimension $\le 1$, $G^0$ is an isometric subtree of $G$.

\medskip\noindent
{\bf Case 2.}
Now, suppose that $G^0$ contains at most one original vertex. Let $A^1$ be the set of original vertices belonging to $G^1\setminus G^2$ and $A^2$ be the set of original vertices
belonging to $G^2\setminus G^1$. First suppose that $|A^1|\ge 2$ and $|A^2|\ge 2$, say
$u_{1},u_{2}\in A^1$ and $u_{3},u_{4}\in A^2$. But then the vertices $u_{1,3},u_{1,4},u_{2,3},u_{2,4}$ must belong to $G^0$. Since those four vertices shatter
the set $\{ 1,3\}$, we obtain a contradiction that $G^0$ has VC-dimension $\le 1$. Hence, one of the sets $A^1$ or $A^2$ contains at most one vertex. Suppose without loss of generality that $A^1$ contains
at least $n-2$ original vertices $u_{1},u_{2},\ldots, u_{n-2}$. First suppose that $G^1$ contains all original vertices. Then since $G^1$ is an isometric subgraph of $G$, each subdivision vertex $u_{i,j}$ also
belongs to $G^1$. This implies that $G^1=G$ and we are done. Thus suppose that the vertex $u_{n}$ does not belong to $A^1$. Since $G^0$ contains at most one original vertex, one of the
vertices $u_{n-1},u_{n}$, say  $u_{n}$,
must belong to $A^2$ (i.e., to $G^2\setminus G^1$). This implies that all vertices $u_{i,n}, i=1,\ldots,n-2$ belong to $G^0$. Since $n\ge 4$ and $u_{n}$ is the unique common neighbor of the vertices $u_{i,n}$ and
$u_{j,n}$ with $i\ne j$ and $1\le i,j\le n-2$ and $G^1$ is an isometric subgraph of $G$, necessarily $u_{n}$ must be a vertex of $G^1$, contrary to our assumption that $u_{n}\in A^2$. This contradiction concludes
the proof of the lemma.
\end{proof}

  \begin{corollary}\label{SKnCopie}
    If $G \in \mathcal{F}(Q_3)$ and  $G$ contains $SK_n$ with $n \ge 4$ as a pc-minor, then $G$ contains $SK_n$ as a convex subgraph.
  \end{corollary}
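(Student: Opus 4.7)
The plan is to induct on the number of contractions witnessing $SK_n$ as a pc-minor of $G$. Since contractions and restrictions commute in partial cubes, we may first pass to a convex subgraph $H$ of $G$ that contracts to $SK_n$; as $H\in\mathcal{F}(Q_3)$ and convexity is transitive, it suffices to show $SK_n$ is convex in $H$, so replacing $G$ by $H$ we assume $G$ itself contracts to $SK_n$. The base case ($G=SK_n$, zero contractions) is immediate.

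For the inductive step, let $E_{j}$ be the $\Theta$-class contracted first and set $G':=\pi_j(G)$. Then $G'\in\mathcal{F}(Q_3)$ contracts to $SK_n$ in fewer steps, so by induction $SK_n$ is a convex subgraph of $G'$. Write $A:=\pi_j(G^-_j)$ and $B:=\pi_j(G^+_j)$; then $(A,A\cap B,B)$ is the isometric cover of $G'$ with respect to which $G$ is the isometric expansion of $G'$. If $SK_n\subseteq A$ (the case $SK_n\subseteq B$ being symmetric), lifting each vertex $x\in SK_n$ to its unique copy in $G^-_j$ produces a subgraph of $G^-_j$ isomorphic to $SK_n$; since $SK_n$ is convex in $A$ and the expansion induces a graph isomorphism $G^-_j\cong A$, this lifted copy is convex in $G^-_j$, and hence convex in $G$ because $G^-_j$ is itself convex in $G$ by Theorem~\ref{Djokovic}.

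Otherwise $SK_n$ contains a vertex of $A\setminus B$ and a vertex of $B\setminus A$. Set $S^1:=SK_n\cap A$, $S^2:=SK_n\cap B$, $S^0:=SK_n\cap(A\cap B)$. Each $S^i$ is the intersection of two convex subgraphs of $G'$, hence convex (and thus isometric) in $SK_n$; connectedness of $SK_n$ forces $S^0\ne\varnothing$ because no edge of $G'$ joins $A\setminus B$ to $B\setminus A$. So $(S^1,S^0,S^2)$ is a genuine isometric cover of $SK_n$ with neither $S^1$ nor $S^2$ equal to $S^0$, i.e., a non-peripheral cover. The corresponding expansion $\psi(SK_n)$ is convex in $G$ by Lemma~\ref{convex_expansion} and therefore belongs to $\mathcal{F}(Q_3)$; but Lemma~\ref{expansionSKn}, applicable since $n\ge 4$, forbids any non-peripheral isometric expansion of $SK_n$ inside $\mathcal{F}(Q_3)$, a contradiction. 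Hence this case cannot occur, closing the induction. The main obstacle is precisely the verification in the second case that $(S^1,S^0,S^2)$ is a bona fide isometric cover of $SK_n$ (convexity of the pieces together with nonemptiness of $S^0$); once that is in hand, the interplay between Lemma~\ref{convex_expansion} and Lemma~\ref{expansionSKn} immediately forces the desired convex copy of $SK_n$ inside $G$.
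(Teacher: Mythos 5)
Your overall strategy is essentially the paper's: the paper runs a minimal-counterexample argument (contract one class, obtain a convex $SK_n$ by minimality, expand back, and invoke Lemma~\ref{convex_expansion} together with Lemma~\ref{expansionSKn} to force peripherality), while you phrase it as an induction on the number of contractions and make the peripheral/non-peripheral dichotomy explicit as a case split. The skeleton is sound and the two key lemmas are the right ones.

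There is, however, one incorrect justification at the step you yourself single out as the main obstacle. You claim that each $S^i$ is ``the intersection of two convex subgraphs of $G'$, hence convex''. But $A=\pi_j(G^-_j)$ and $B=\pi_j(G^+_j)$ are in general only \emph{isometric}, not convex, subgraphs of $G'=\pi_j(G)$: the class $E_j$ osculates the halfspace $G^-_j$, so Lemma~\ref{contraction-ChKnMa}(i) does not apply, and indeed for $G=C_6$ with $G'=\pi_j(G)=C_4$ the set $A$ is a $3$-vertex path, which is not convex in $C_4$. Consequently neither the convexity of $A,B$ nor the convexity of $S^1,S^2$ in $SK_n$ can be asserted. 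Fortunately, convexity is not what the definition of an isometric cover requires: you only need $S^1$ and $S^2$ to be \emph{isometric} in $SK_n$, and this follows from the fact (used elsewhere in the paper, in the proof of Claim~\ref{half-carrier}) that the intersection of a convex subgraph ($SK_n$, convex in $G'$ by induction) with an isometric subgraph ($A$, respectively $B$) is an isometric subgraph of $G'$, hence of $SK_n$. With that substitution, your verification that $(S^1,S^0,S^2)$ is a genuine non-peripheral isometric cover -- including the connectivity argument for $S^0\ne\varnothing$ -- goes through, and the rest of the proof is correct.
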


\begin{proof} Suppose by way of contradiction that $G'$ is a smallest graph in $\mathcal{F}(Q_3)$ which contains $SK_n$ as a pc-minor but does not contain $SK_n$ as a convex subgraph. This means that any contraction
of $G'$ along a $\Theta$-class of $G'$ that do not cross the $SK_n$ pc-minor, also contains this $SK_n$ as a pc-minor. We denote the resulting graph by $G$. Since $G\in \mathcal{F}(Q_3)$,
by minimality choice of $G'$, $G$ contains $SK_n$ as a convex subgraph, denote this subgraph
by $H$. Now, $G'$ is obtained from $G$ by an isometric expansion. By Lemma \ref{convex_expansion}, $H'=\psi(H)$ is a convex subgraph of $G'$. Since $G'\in  \mathcal{F}(Q_3)$, by Lemma \ref{expansionSKn} this isometric
expansion restricted to $H=SK_n$ is  a peripheral expansion. This implies that the image of $H$
under this expansion is a convex subgraph $H'$ of $G'$ which contains a copy of $SK_n$ as a convex subgraph, and thus $G'$ contains a convex copy of $SK_n$.
\end{proof}

\begin{lemma} \label{6cycle} If $C=SK_3$ is an isometric 6-cycle of $G\in {\mathcal F}(Q_3)$, then  $C$ is convex or its convex hull is $Q^-_3$.
\end{lemma}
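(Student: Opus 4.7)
The plan is to analyze the convex hull $H := \conv(C)$ directly, by bounding its number of $\Theta$-classes and then enumerating the possibilities inside $Q_3$.

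First I would show that every $\Theta$-class of $H$ crosses $C$. Let $F$ be a $\Theta$-class of $H$ with complementary halfspaces $H^-_F, H^+_F$. Both halfspaces must intersect $C$: if $C$ were contained in, say, $H^+_F$, then $H^+_F$ would be a convex subgraph of $G$ containing $C$, forcing $\conv(C) \subseteq H^+_F$ and contradicting that $F$ is a $\Theta$-class of $H = \conv(C)$. Since $C = C_6$ is connected and its vertex set meets both halfspaces, some edge of $C$ lies in $F$, i.e.\ $F$ crosses $C$. Because $C = SK_3$ has exactly three $\Theta$-classes (the three pairs of opposite edges), $H$ has at most $3$ $\Theta$-classes, and exactly $3$ since $C$ is isometric in $G$.

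Next, since $H$ is a convex subgraph of the partial cube $G$, it is itself a partial cube with exactly three $\Theta$-classes, so $H$ embeds isometrically into $Q_3$ as a subgraph containing the isometric $6$-cycle $C$. Up to symmetry any isometrically embedded $C_6$ in $Q_3$ is the ``equator'' obtained by deleting a pair of antipodal vertices, so the isometric subgraphs of $Q_3$ containing $C$ are exactly $C_6$, $Q_3^-$ (add one of the two missing antipodal vertices), and $Q_3$ itself. Hence $H$ is isomorphic to one of $C_6$, $Q_3^-$, or $Q_3$.

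Finally, I rule out $H \cong Q_3$. Convex subgraphs are restrictions, hence pc-minors, so $H \in \mathcal{F}(Q_3)$; by Lemma~\ref{VCdim_d} this means $\vcd(H) \le 2$, whereas $Q_3$ has VC-dimension $3$. Therefore either $H = C$ (i.e.\ $C$ is convex in $G$) or $H \cong Q_3^-$, which is the desired conclusion. The main delicate point is the first step, verifying that any $\Theta$-class of the convex hull must cross the generating cycle; this is really just the observation that $\conv(C)$ is the minimal convex set containing $C$, combined with the fact that each $\Theta$-class of a partial cube defines a proper split into two convex halfspaces.
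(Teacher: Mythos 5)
Your proof is correct and follows essentially the same route as the paper's: both arguments reduce to the observation that $\conv(C)$ lives inside a $3$-cube (the paper via the ambient subcube of $Q_m$ spanned by $C$, you via counting the $\Theta$-classes of $\conv(C)$) and then exclude the full $Q_3$ using $G\in\mathcal{F}(Q_3)$. Your version merely spells out in more detail why the hull is confined to a $3$-cube, a point the paper's two-line proof leaves implicit.
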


\begin{proof} The convex hull of $C$ in $Q_m$ is a 3-cube $Q$ and $\conv(C)=Q\cap V(G)$. Since $G$ belongs to ${\mathcal F}(Q_3)$, $Q$ cannot be included in  $G$.
Hence either $\conv(C)=C$ or $\conv(C)=Q^-_3$.
\end{proof}

\subsection{Gatedness of full subdivisions of $K_n$}
The goal of this subsection is to prove the following result:

\begin{proposition} \label{gatedSKn} If $H=SK_n$ with $n \ge 4$ is a convex subgraph of $G\in {\mathcal F}(Q_3)$ and $H$ is not included in a larger full subdivision of $G$, then $H$ is a gated subgraph of $G$.
\end{proposition}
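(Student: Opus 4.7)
The plan is to reduce gatedness of $H$ to the statement that, in the standard embedding of Lemma~\ref{standardSKn} (so that $H\subseteq Q_n\subseteq Q_m$ with $\varphi(u_i)=\{i\}$ and $\varphi(u_{i,j})=\{i,j\}$), every $x\in V(G)$ satisfies $|\alpha(x)|\in\{1,2\}$, where $\alpha(x):=\varphi(x)\cap\{1,\dots,n\}$ and $\beta(x):=\varphi(x)\setminus\{1,\dots,n\}$. For $z\in V(H)$ one has $d_G(x,z)=|\alpha(x)\triangle\varphi(z)|+|\beta(x)|$, so the minimum of $d_G(x,\cdot)$ over $V(H)$ is attained at a unique vertex iff $\alpha(x)$ is itself an $H$-label, i.e., $|\alpha(x)|\in\{1,2\}$; in that case the unique $H$-vertex $y$ with $\varphi(y)=\alpha(x)$ satisfies $d_G(x,z)=d_G(x,y)+d_G(y,z)$ for all $z\in V(H)$, hence is a gate, while for $|\alpha(x)|=0$ all $n$ originals are equidistant from $x$ and for $|\alpha(x)|\ge 3$ all $\binom{|\alpha(x)|}{2}$ subdivisions $u_{i,j}$ with $\{i,j\}\subseteq\alpha(x)$ are equidistant from $x$, precluding a gate.

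First I would dispose of the case $\beta(x)=\varnothing$ for $x\in V(G)\setminus V(H)$, using only convexity of $H$ and isometry of $G\subseteq Q_m$. By induction on $|\varphi(x)|$: the vertex $v_\varnothing$ lies in $I_G(u_1,u_2)$ (both endpoints at distance $1$, with $d_G(u_1,u_2)=2$); any $\{i,j,k\}\in V(G)$ of size $3$ lies in $I_G(u_{i,j},u_{i,k})$; and for $A\subseteq\{1,\dots,n\}$ with $|A|\ge 4$, any shortest $G$-path from $A$ to some $u_{i,j}\subseteq A$ (of length $|A|-2$, since $G$ is isometric in $Q_m$) must pass through a subset of size $|A|-1\ge 3$, contradicting the inductive hypothesis. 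Hence every $x\in V(G)\setminus V(H)$ satisfies $\beta(x)\ne\varnothing$, which in particular eliminates $|\alpha(x)|\in\{0\}\cup\{k\colon k\ge 3\}$ whenever $\beta(x)=\varnothing$.

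The remaining task is to exclude $|\alpha(x)|\in\{0\}\cup\{k\colon k\ge 3\}$ with $\beta(x)\ne\varnothing$, which I close by strong induction on $|V(G)|$. The base case $V(G)=V(H)$ is trivial. In the inductive step the preceding paragraph supplies a $\Theta$-class $E_b$ with $b>n$, and contracting it yields $G'=\pi_b(G)\in\mathcal{F}(Q_3)$ with $|V(G')|<|V(G)|$. Since $E_b$ does not cross $H$, $\pi_b$ is injective on $V(H)$ and $\pi_b(H)\cong H$; Lemma~\ref{contraction-ChKnMa}(i), together with the arguments of the previous paragraph re-applied inside $G'$ to rule out the only vertices that could enter an $H$-to-$H$ interval after contraction, shows that $H$ stays convex in $G'$. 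Maximality of $H$ in $G'$ follows by lifting: any hypothetical $SK_p\supsetneq H$ in $G'$ with $p\ge n+1$ would by Corollary~\ref{SKnCopie} have a convex representative in $G'$, and the restriction of the isometric expansion $G'\leadsto G$ to this convex $SK_p$ would by Lemma~\ref{expansionSKn} be peripheral with $H$ on the large side (since $H$ sits entirely on one side of $E_b$ in $G$), so this large side lifts via Lemma~\ref{convex_expansion} to a convex $SK_p$-subgraph of $G$ containing $H$, contradicting the maximality of $H$ in $G$. By the inductive hypothesis $H$ is gated in $G'$, and the gate lifts: for $x\in V(G)$ and $z\in V(H)$ one has $d_G(x,z)=d_{G'}(\pi_b(x),z)+[b\in\varphi(x)]$ (since $b\notin\varphi(z)$), so the gate $g\in V(H)$ of $\pi_b(x)$ in $G'$ also satisfies $d_G(x,z)=d_G(x,g)+d_G(g,z)$ and is the gate of $x$ in $G$.

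The main obstacle is preserving maximality under the contraction step, since $E_b$ may osculate $H$ rather than be disjoint from it; the combination of Corollary~\ref{SKnCopie} (promoting pc-minors to convex copies) with the peripheral-expansion statement of Lemma~\ref{expansionSKn} (forcing the expansion restricted to any such convex copy to leave $H$ on the large side) is precisely what allows the hypothetical larger full subdivision in $G'$ to be lifted to $G$ and thereby contradict the assumed maximality of $H$ in $G$.
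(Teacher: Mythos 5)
Your reduction of gatedness to the statement that every $x\in V(G)$ has $|\alpha(x)|\in\{1,2\}$ is correct and is essentially the same coordinate analysis the paper performs (their $A_0$ is your $\alpha(x)$), and your first step, ruling out vertices with $\beta(x)=\varnothing$ outside $H$ by convexity and induction on $|\varphi(x)|$, is sound. The genuine gap is in the inductive step on $|V(G)|$, at the point where you claim that $H$ stays convex in $G'=\pi_b(G)$. Lemma~\ref{contraction-ChKnMa}(i) only covers the cases where $E_b$ crosses or is disjoint from $H$; when $E_b$ osculates $H$ the vertices that can enter an $H$-to-$H$ interval of $G'$ are images of vertices $y\in V(G)$ with $\varphi(y)=A\cup\{b\}$, $A\subseteq\{1,\dots,n\}$, $|A|\in\{0,3,4\}$, and these are \emph{not} excluded by your first step (which only treats $\beta=\varnothing$). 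Your proposal to ``re-apply the arguments of the previous paragraph inside $G'$'' is circular: those arguments use convexity of $H$ in the ambient graph, which is exactly what is to be proved for $G'$. The circularity is not cosmetic, because the simplest obstruction, a vertex $y$ with $\varphi(y)=\{b\}$ (so $\pi_b(y)$ gets label $\varnothing$ and lands in $I_{G'}(u_i,u_j)$), is precisely one of the gateless configurations the whole proposition must exclude; ruling it out genuinely needs the maximality hypothesis. Indeed, if $\{b\}\in V(G)$ then, since $\varnothing\notin V(G)$ and $G$ is isometric in $Q_m$, all sets $\{i,b\}$ must be vertices of $G$, and $H$ together with $\{b\}$ and the $\{i,b\}$ forms an $SK_{n+1}$; this is the content of the paper's Claim~\ref{claimSKn}, which you have no substitute for. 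Similarly, the case $|\alpha(x)|\ge 3$ with $\beta(x)\ne\varnothing$ is never actually disposed of in your argument except through the same broken induction (here there is an easy direct fix, used by the paper: with $n\ge 4$ the eight vertices $u_1,u_2,u_3,u_4,u_{1,2},u_{1,3},u_{2,3},x$ shatter $\{1,2,3\}$, contradicting $G\in\mathcal{F}(Q_3)$).

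For comparison, the paper confronts exactly this failure of convexity after contraction head-on: it first proves that an isometric $SK_n$ ($n\ge4$) is convex unless it extends to $SK_n^*$, then proves that the only $G$-neighbors of $v_\varnothing$ in $Q_m$ are the original vertices of $H$ (using maximality), and only then runs a minimal-counterexample contraction argument for the case $\alpha(x)=\varnothing$ in which the possible non-convexity of the contracted $H$ is explicitly excluded by these two claims. Your maximality-lifting step (via Corollary~\ref{SKnCopie}, Lemma~\ref{expansionSKn} and Lemma~\ref{convex_expansion}) and your gate-lifting computation are fine, but the proof cannot be completed without an analogue of these two preparatory claims; as written, the argument assumes what it needs to prove at the contraction step.
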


\begin{proof} The proof of Proposition \ref{gatedSKn} uses the results of previous subsection and two claims.

\begin{claim} \label{convexSKn} If $H=SK_n$ with $n \ge 4$ is an isometric subgraph of $G\in {\mathcal F}(Q_3)$, then either $H$ extends in $G$ to $SK^*_n$ or $H$ is a convex subgraph of $G$.
\end{claim}

\begin{proof} Suppose by way of contradiction that $H=SK_n$ does not extend in $G$ to $SK^*_n$ however $H$ is not convex. Then there exists a vertex $v\in V(G)\setminus V(H)$ such that $v\in I(x,y)$ for
two vertices $x,y\in V(H)$. First note that $x$ and $y$ cannot be both original vertices. Indeed, if $x=u_i$ and $y=u_j$, then in $Q_m$ the vertices $x$ and $y$ have two common neighbors:
the subdivision vertex $u_{i,j}$ and $v_{\varnothing}$. But $v_{\varnothing}$ is adjacent in $Q_m$ to all original vertices of $H$,
thus it cannot belong to $G$ because $H=SK_n$ does not extend to $SK^*_n$. Thus, further we can suppose that the vertex $x$ is a subdivision vertex, say $x=u_{i,j}$. We distinguish several
cases depending of the value of $d(x,y)$.

\medskip\noindent
{\bf Case 1.} $d(x,y)=2$.

This implies that $y=u_{i,k}$  is also a subdivision vertex and $x$ and $y$ belong in $H$ to a common isometric 6-cycle $C$. Since $v$ belongs to $\conv(C)$, Lemma \ref{6cycle} implies that $v$ is adjacent to the third subdivision vertex $z=u_{j,k}$ of $C$. Hence $v=\{ i,j,k\}$. Since $n\ge 4$, there exists $\ell\ne i,j,k$ such that $\{ \ell\}$ is an original vertex of $H$ and $\{ i,\ell\},\{ j,\ell\},$ and $\{ k,\ell\}$ are subdivision vertices of $H$. Contracting $\ell$, we will obtain a forbidden $Q_3$.

\medskip\noindent
{\bf Case 2.} $d(x,y)=3$.

This implies that $y=u_k$ is an original vertex with $k\ne i,j$. Then again  the vertices $x$ and $y$ belong in $H$ to a common isometric 6-cycle $C$. Since $v$ belongs to $\conv(C)$, Lemma \ref{6cycle} implies that either $v$ is adjacent to $u_i,u_j$, and $u_k$ or to $u_{i,j},u_{i,k}$, and $u_{j,k}$, which was covered by the Case 1.

\medskip\noindent
{\bf Case 3.} $d(x,y)=4$.

This implies that $y=u_{ k,\ell}$ is a subdivision vertex with $k,\ell\ne i,j$. In view of the previous cases, we can suppose that $v$ is adjacent to $x$ or to $y$, say $v$ is adjacent to $x$. Let $Q$ be the convex hull of $\{ x,y\}$ in $Q_m$. Then $Q$ is a 4-cube and $x=\{ i,j\}$ has 4 neighbors in $Q$: $\{ i\}, \{ j\}, \{ i,j,k\}$ and $\{ i,j,\ell\}$. The vertices $\{ i\}, \{ j\}$ are original vertices of $H$. Thus suppose that $v$ is one of the vertices $\{ i,j,k\},\{ i,j,\ell\}$, say $v=\{ i,j,k\}$.
But then $v$ is adjacent to $\{ j,k\}$, which is a subdivision vertex of $H$ and we are in the conditions of Case 1. Hence $H$ is a convex subgraph of $G$.
\end{proof}

\begin{claim} \label{claimSKn} If $H=SK_n$ with $n \ge 4$ is a convex subgraph of $G\in {\mathcal F}(Q_3)$ and $H$ is not included in a larger full subdivision in $G$, then the vertex
$v_{\varnothing}$ of $Q_m$ is adjacent only to the original vertices $u_1,\ldots,u_n$ of $H$.
\end{claim}

\begin{proof}
Since $H$ is convex, the vertex $v_{\varnothing}$ of $Q_m$  is not a vertex of $G$. Let $u_i=\{ i\}, i=1,\ldots,n$ be the original vertices of $H$. Suppose that in $Q_m$ the
vertex $v_{\varnothing}$ is adjacent to a vertex $u$ of $G$, which is not included in $H$, say $u=\{ n+1\}$. Since $u$ and each $u_i$ has in $Q_m$ two common neighbors $v_{\varnothing}$
and $u_{i,n+1}=\{ i,n+1\}$ and since $G$ is an isometric subgraph of $Q_m$, necessarily each vertex $u_{i,n+1}$ is a vertex of $G$. Consequently, the vertices of $H$ together with the vertices $u,u_{1,n+1},\ldots,u_{n,n+1}$
define an isometric subgraph $H'=SK_{n+1}$ of $Q_m$. Since $v_{\varnothing}$ does not belong to $G$, by Claim \ref{convexSKn} $H'$ is convex, contrary to the assumption that $H$ is not included in a larger convex
full subdivision of $G$. Consequently, the neighbors in $G$ of $v_{\varnothing}$ are only the original vertices $u_1,\ldots,u_n$ of $H$.
\end{proof}
%
%
Now, we prove Proposition \ref{gatedSKn}. Let $G\in {\mathcal F}(Q_3)$ be an isometric subgraph of the cube $Q_m$ in such that the embedding of $H$ is standard.
Let $Q$ be the convex hull of $H$ in $Q_m$; $Q$ is a cube of dimension $n$
and a gated subgraph of $Q_m$.  Let $v$ be a vertex of $G$ and $v_0$ be the gate of $v$ in $Q$. To prove that $H$ is gated it suffices to show that $v_0$ is a vertex of $H$. Suppose by way of contradiction
that $H$ is not gated in $G$ and among  the vertices of $G$ without a gate in $H$ pick a vertex $v$ minimizing the distance $d(v,v_0)$. Suppose that $v$ is encoded by the set $A$. Then
its gate $v_0$ in $Q_m$ is encoded by the set $A_0:=A\cap \{ 1,\ldots,n\}$. If $|A_0|=1,2$, then $A_0$ encodes an original or subdivided vertex of $H$, therefore $v_0$ would belong to $H$,
contrary to the choice of $v$. So, $A_0=\varnothing$ or $|A_0|>2$.

First suppose that $A_0=\varnothing$, i.e., $v_0=v_{\varnothing}$. Since $v_\varnothing$ is adjacent only to the original vertices of $H$, by Claim \ref{claimSKn} all original vertices of $H$ have distance $k=d(v,v_{\varnothing})+1\ge 3$ to $v$.
From the choice of $v$ it follows that $I(v,u_i)\cap I(v,u_j)=\{ v\}$ for any two original vertices $u_i$ and $u_j$, $i\ne j$.
Indeed, if $I(v,u_i)\cap I(v,u_j)\ne \{ v\}$ and $w$ is a neighbor of $v$ in $I(v,u_i)\cap I(v,u_j)$, then $d(w,u_i)=d(w,u_j)=k-1$. Therefore the gate $w_0$ of $w$  in $Q$ has distance at most $k-2$ from $w$,
yielding that $d(v,w_0)=k-1$. This is possible only if $w_0=v_0$. Therefore, replacing $v$ by $w$ we will get a vertex of $G$ whose gate $w_0=v_0$ in $Q$ does not belong to $H$ and for which $d(w,w_0)<d(v,v_0)$,
contrary to the minimality in the choice of $v$.  Thus $I(v,u_i)\cap I(v,u_j)=\{ v\}$.
Let $A=\{ n+1,\ldots, n+k-1\}$.

If $k=3$, then 
$v$ is encoded by  $A=\{ n+1,n+2\}$. By Claim \ref{claimSKn}, any shortest path of $G$ from $u_i=\{ i\}$ to $v$ must be of the form $(\{ i\}, \{ i,\ell\}, \{ \ell\}, \{ n+1,n+2\})$, where $\ell\in \{ n+1,n+2\}$. Since we have at least four original vertices, at least two of such shortest paths of $G$ will pass via the same neighbor $\{ n+1\}$ or $\{ n+2\}$ of $v$, contrary to the assumption that $I(v,u_i)\cap I(v,u_j)=\{ v\}$ for any $u_i$ and $u_j$, $i\ne j$.
If $k\ge 4$,  let $G'=\pi_{n+1}(G)$ and $H'=\pi_{n+1}(H)$ be the images of $G$ and $H$ by contracting the edges of $Q_m$ corresponding to the coordinate $n+1$. Then $G'$ is an isometric subgraph of the hypercube $Q_{m-1}$ and $H'$ is a full subdivision isomorphic to $SK_n$ isometrically embedded in $G'$. Let also $v',v'_{\varnothing,}$ and $u'_i, i=1,\ldots,n,$ denote the images of the vertices $v, v_{\varnothing},$ and $u_i$ of $G$. Then $u'_1,\ldots, u'_n$ are the original vertices of $H'$. Notice also
that $v'$ has distance $k-1$ to all original vertices of $H'$ and distance $k-2$ to $v'_{\varnothing}$. Thus in $G'$ the vertex $v'$ does not have a gate in $H'$. By the minimality in the choice of $v$ and $H$, either $H'$ is not convex in
$G'$ or $H'$ is included in a larger full subdivision of $G'$. If $H'$ is not convex in $G'$, by Claim \ref{convexSKn} $v'_{\varnothing}$ must be a vertex of $G'$. Since $v_{\varnothing}$ is not a vertex of $G$,
this is possible only if the set $\{ n+1\}$ corresponds to a vertex of $G$. But we showed in Claim \ref{claimSKn}  that the only neighbors of $v_{\varnothing}$ in $G$ are the original vertices of $H$. This contradiction shows that $H'$ is a convex. Therefore, suppose that $H'$ is included in a larger full subdivision $H''=SK_{n+1}$ of $G'$. Denote by $u'_{\ell}=\{ \ell\}$ the original vertex of $H''$ different from the vertices $u'_i, i=1,\ldots,n$; hence $\ell\notin \{ 1,\ldots,n\}$.
Since $u'_{\ell}$ is a vertex of $G'$ and in $Q_m$ the set $\{ \ell\}$ does not correspond to a vertex of $G$, necessarily the set $\{ n+1,\ell\}$ is a vertex of $G$ in $Q_m$. Therefore, we are in the conditions of the previous subcase, which was shown to be impossible. This concludes the analysis of case  $A_0=\varnothing$.


Now, suppose that $|A_0|\ge 3$ and let $A_0=\{ 1,2,3,\ldots,k\}$. This implies that the vertices $u_1,u_2,u_3$ are original vertices  and $u_{1,2},u_{1,3},u_{2,3}$ are subdivision vertices of $H$. Since $H=SK_n$ with $n\ge 4$, $H$ contains an original vertex $u_{\ell}$ with $\ell\ge 4$, say $\ell=4$. But then the sets corresponding to the vertices $u_1,u_2,u_3,u_4,u_{1,2},u_{1,3},u_{2,3},$ and $v$ of $G$  shatter the set $\{ 1,2,3\}$, contrary to the assumption that $G\in {\mathcal F}(Q_3)$. This concludes the case $|A_0|\ge 3$.
Consequently, for any vertex $v$ of $G$ the gate $v_0$ of $v$ in $Q$ belongs to $H$. This shows that $H$ is a gated subgraph of $G$ and concludes the proof of the proposition.
\end{proof}

\subsection{Gated hulls of 6-cycles}\label{gh-6cycle}  The goal of this subsection is to prove the following result:

\begin{proposition} \label{gatedhullC6} If $C$ is an induced (and thus isometric) 6-cycle of $G\in {\mathcal F}(Q_3)$, then the gated hull $\gate(C)$ of $C$ is either $C$, or $Q_3^-$, or a full subdivision.
\end{proposition}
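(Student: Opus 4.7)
By Lemma~\ref{6cycle}, $\conv(C)\in\{C, Q_3^-\}$, so I split the argument accordingly. Throughout, I isometrically embed $G$ into $Q_m$ so that $C$ sits in standard position on coordinates $\{1,2,3\}$, with original vertices $\{1\},\{2\},\{3\}$ and subdivision vertices $\{1,2\},\{1,3\},\{2,3\}$.

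\emph{Case $\conv(C)=Q_3^-$.} Up to symmetry, assume $\{1,2,3\}\in G$ and $\varnothing\notin G$; the goal is to show $Q_3^-$ is gated, giving $\gate(C)=Q_3^-$. For $x=A\in G$, the gate of $x$ in the ambient cube on $\{1,2,3\}$ is $A\cap\{1,2,3\}$, which lies in $Q_3^-$ iff $A\cap\{1,2,3\}\neq\varnothing$. Arguing by contradiction, suppose some $x=A$ has $A\cap\{1,2,3\}=\varnothing$, and pick one minimizing $|A|$. If $|A|=1$, say $A=\{\ell\}$, then $d(x,\{i\})=2$ forces $\{\ell,i\}\in G$ for $i=1,2,3$ (the other intermediate candidate $\varnothing$ being absent from $G$); contracting $E_\ell$ identifies $\{\ell\}$ with $\varnothing$ and each $\{\ell,i\}$ with $\{i\}$, giving $Q_3$ as a pc-minor, contradicting $G\in\mathcal{F}(Q_3)$. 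If $|A|\ge 2$, minimality forces $\{a\}\notin G$ for each $a\in A$, so contracting $E_a$ keeps $\varnothing\notin G/E_a$ and preserves $Q_3^-$ in $G/E_a$ (since $\{1,2,3\}\in G$ survives), while the image of $x$ is a bad vertex of smaller size; induction on $|A|$ closes the case.

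\emph{Case $\conv(C)=C$.} Now $\varnothing,\{1,2,3\}\notin G$. If $C$ is gated, $\gate(C)=C$ and we are done. Otherwise pick $x=A$ of minimal $|A|$ with $A\cap\{1,2,3\}\in\{\varnothing,\{1,2,3\}\}$; WLOG the former. For $|A|=1$, $A=\{\ell\}$, the same reasoning yields $\{\ell,i\}\in G$ for $i=1,2,3$, so together with $\{\ell\}$ and $C$ we get an isometrically embedded $SK_4$ on $\{1,2,3,\ell\}$ containing $C$. For $|A|\ge 2$, contract a coordinate $a\in A$: if $\{1,2,3,a\}\in G$ then $Q_3^-\subseteq G/E_a$ together with the bad image of $x$ triggers the previous case and produces a forbidden $Q_3$ pc-minor, a contradiction; hence $\{1,2,3,a\}\notin G$, and we stay in the present case with a strictly smaller bad vertex, closing the step by induction on $|A|$. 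This produces an $SK_4$ pc-minor of $G$ containing $C$; by Corollary~\ref{SKnCopie}, tracked through the peripheral expansions of Lemma~\ref{expansionSKn}, this lifts to a convex $SK_4$ of $G$ still containing $C$. Let $H$ be a maximal full subdivision of $G$ containing $C$; Proposition~\ref{gatedSKn} makes $H$ gated, so $\gate(C)\subseteq H$. Any convex subgraph of $H\cong SK_n$ containing $C$ equals $SK_m$ on a coordinate subset of $\{1,\ldots,n\}$ that contains $\{1,2,3\}$, and for $m<n$ such $SK_m$ is not gated in $H$ (an original vertex $\{k\}$ with $k$ outside the chosen subset admits no gate in $SK_m$, by direct distance check); hence $\gate(C)=H$, a full subdivision.

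\emph{Main obstacle.} The delicate point is the $|A|\ge 2$ step in the case $\conv(C)=C$: one must carefully track that contracting a coordinate of $A$ genuinely preserves the case hypothesis (or else triggers the first case and aborts by a forbidden $Q_3$), and then ensure that the $SK_4$ produced in the contracted graph lifts back to an $SK_4$ of $G$ containing the specific 6-cycle $C$, rather than some unrelated 6-cycle.
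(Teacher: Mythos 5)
Your proof is correct and uses the same toolkit as the paper (Lemma~\ref{6cycle}, Claim~\ref{convexSKn}, Lemma~\ref{expansionSKn}, Corollary~\ref{SKnCopie}, Proposition~\ref{gatedSKn}), but it is organized around a genuinely different induction. The paper takes a minimal counterexample $G$ together with a gateless vertex $v$ closest to $S=\conv(C)$, contracts $\Theta$-classes not crossing $S$, and splits on whether $S$ stays convex after contraction; you instead observe that a vertex labelled $A$ fails to have a gate in $\conv(C)$ exactly when $A\cap\{1,2,3\}\in\{\varnothing,\{1,2,3\}\}$, and then induct on $|A|$ by contracting coordinates of $A$ one at a time. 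Your version buys a cleaner, fully explicit treatment of the $Q_3^-$ case (a direct shattering of $\{1,2,3\}$ by the eight vertices $\{\ell\},\{\ell,i\},\{i,j\},\{1,2,3\}$, rather than the paper's ``contract all classes separating $S'$ from $v'$''), and it makes transparent why the only obstruction in the case $\conv(C)=C$ is an $SK_4$ through $C$; the paper's version avoids coordinate bookkeeping and handles both shapes of $S$ uniformly. The one step you should spell out is the lift at the end of the second case: after Lemma~\ref{expansionSKn} gives that each expansion restricted to the convex $SK_4$ is peripheral with tree side $G^0$, the reason the specific cycle $C$ lands inside the lifted $SK_4$ (and not elsewhere) is that $C$ does not cross the expanded $\Theta$-class, so it lies entirely in one of the two sheets of $\psi(SK_4)$, and a $6$-cycle cannot lie in the tree sheet; iterating this over the expansion sequence yields a convex $SK_4$ of $G$ containing $C$ itself. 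With that sentence added, the argument is complete, and your closing computation that no proper convex $SK_m\subsetneq SK_n$ containing $C$ is gated actually supplies a detail the paper leaves to the reader.
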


\begin{proof} If $C$ is included in a maximal full subdivision $H=SK_n$ with $n\ge 4$, by Proposition \ref{gatedSKn} $H$ is gated. Moreover,
one can directly check that any vertex of $H\setminus C$ must be included in the gated hull of $C$, whence $\gate( C )=H$. Now suppose that $C$ is not included in any  full subdivision $SK_n$ with $n\ge 4$.
By Lemma \ref{6cycle}, $S:=\conv(C)$ is either $C$ or $Q^-_3$. In this case we assert that $S$ is gated and thus $\gate(C)=\conv(C)$.  Suppose that $G$ is a two-dimensional partial cube of smallest size
for which this is not true. Let $v$ be a vertex of $G$ that has no gate in $S$ and is as close as possible to $S$, where $d_G(v,S)=\min \{ d_G(v,z): z\in S\}$
is the distance from $v$ to $S$. Given a $\Theta$-class $E_i$ of $G$, let $G':=\pi_i(G)$, $S':=\pi_i(S)$, and $C'=\pi_i(C)$. For a vertex $u$ of $G$, let $u':=\pi_i(u)$.

Since any convex subgraph of $G$ is the intersection of halfspaces, if all $\Theta$-classes of $G$ cross $S$, then $S$ coincides with $G$, contrary to the  choice of $G$. Thus $G$ contains $\Theta$-classes not crossing $S$.
First suppose that there exists a $\Theta$-class $E_i$ of $G$ not crossing $S$ such that $S'$ is convex in $G'$. Since $G'\in {\mathcal F}(Q_3)$, by Lemma \ref{6cycle} either the 6-cycle $C'$ is
convex or its convex hull in $G'$ is $Q^-_3$. Since the distance in $G'$ between $v'$ and any vertex of $S'$ is either the same as the distance in $G$ between $v$ and the corresponding vertex of $S$
(if $E_i$ does not separate $v$ from $S$) or is one less than the corresponding distance in $G$ (if $v$ and $S$ belong to complementary halfspaces defined by $E_i$), $S'$ is not gated in $G'$, namely the vertex $v'$ has no gate in $S'$.
Therefore, if $S'=Q^-_3$, then contracting all $\Theta$-classes of $G'$ separating $S'$ from $v'$, we will get $Q_3$ as a pc-minor, contrary to the assumption that $G$ and $G'$ belong to $\mathcal{F}(Q_3)$. This implies that
$S'=C'$ and thus that $S=C$. Moreover, by minimality of $G$, the 6-cycle $C'$ is included in a maximal full subdivision $H'=SK_n$ of $G'$.
By Proposition \ref{gatedSKn}, $H'$ is a gated subgraph of $G'$. Let $w'$
 be the gate of $v'$ in $H'$ (it may happen that $w'=v'$).  Since $C'$ is not gated, necessarily $w'$ is not a vertex of $C'$. For the same reason, $w'$ is not adjacent to a vertex of $C'$.
 The graph $G$ is obtained from $G'$ by an isometric expansion $\psi_i$ (inverse to $\pi_i$).
 By Lemma \ref{expansionSKn},  $\psi_i$, restricted to $H'$, is a peripheral expansion along an isometric tree of $H'$.
By Corollary \ref{SKnCopie}, $G$ contains an isometric subgraph isomorphic to $H'$. By the choice of $E_i$, $C$ does not cross $E_i$, and this implies that in $G$ the convex cycle $C$
is contained in a full subdivision of $K_n$, contrary to the choice of $C$.

 Now, suppose that for any $\Theta$-class $E_i$ of $G$ not crossing $S$, $S'$ is not convex in $G'$. Since $C'$ is an isometric 6-cycle of $G'$, $G'\in {\mathcal F} (Q_3)$, and the 6-cycle $C'$
 is not convex in $G'$, by Lemma \ref{6cycle} we conclude that the convex hull of $C'$ in $G'$ is $Q^-_3$ and this $Q^-_3$ is different from $S'$.
 Hence $S'=C'$ and $S=C$.  This implies that there exists a vertex $z'$ of $G'$ adjacent to three vertices $z'_1,z'_2,$ and $z'_3$ of $C'$. Let $z_1,z_2,z_3$ be the three preimages in $C$ of the vertices $z'_1,z'_2,z'_3$.
 Let also $y,z$ be the preimages in the hypercube $Q_m$ of the vertex $z'$. Suppose that $y$ is adjacent to $z_1,z_2,z_3$ in $Q_m$. Since $C'$ is the image of the convex 6-cycle of $G$, this implies
 that $y$ is not a vertex of $G$ while $z$ is a vertex of $G$. Since $G$ is an isometric subgraph of $Q_m$, $G$ contains a vertex $w_1$ adjacent to $z$ and $z_1$, a vertex $w_2$ adjacent to $z$
 and $z_2$, and a vertex $w_3$ adjacent to $z$ and $z_3$. Consequently, the vertices of $C$ together with the vertices $z,w_1,w_2,w_3$ define a full subdivision $SK_4$, contrary to our assumption
 that $C$ is  not included in such a subdivision. This shows that the convex hull of the 6-cycle $C$ is gated.
\end{proof}

\section{Convex and gated hulls of long isometric cycles}

In the previous section we described the structure of gated hulls of 6-cycles in two-dimensional partial cubes. In this section, we provide a description of convex and gated hulls of {\it long isometric cycles}, i.e., of isometric cycles of length $\ge 8$. We prove that convex hulls of long isometric cycles are disks, i.e., the region graphs of pseudoline arrangements.  Then  we show that all such disks are gated. 
In particular, this implies that convex long cycles in two-dimensional partial cubes are gated.

\subsection{Convex hulls of long isometric cycles}

A two-dimensional  partial cube $D$ is called a \emph{pseudo-disk} if $D$ contains an isometric cycle $C$ such that  $\conv(C)=D$;
$C$ is called the {\it boundary} of $D$ and is denoted by $\partial D$. If $D$ is the convex hull of an isometric cycle $C$ of $G$,
then we say that $D$ is a pseudo-disk of $G$. Admitting that $K_1$ and $K_2$ are pseudo-disks, the class of all pseudo-disks is closed under contractions.
The main goal of this subsection is to prove the following result:

\begin{proposition} \label{prop:disks} A graph $D\in {\mathcal F}(Q_3)$ is a pseudo-disk if and only if $D$ is a disk.  In particular, the convex hull $\conv(C)$ of an isometric cycle $C$ of any graph $G\in {\mathcal F}(Q_3)$ is an $\AOM$ of rank $2$.
\end{proposition}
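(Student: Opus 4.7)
The plan is to prove both implications. For the easy ``if'' direction, if $D$ is a disk realized by a pseudoline arrangement $U$, then the cyclic sequence of unbounded regions of $U$ forms an isometric even cycle $C$ in $D$ whose convex hull is all of $D$: no halfspace of $D$ can contain $C$, because $C$ crosses every pseudoline and hence meets every halfspace on both sides, so the intersection of all halfspaces containing $C$ is $D$ itself. Moreover $D$ is an $\AOM$ of rank at most $2$, so $\vcd(D)\le 2$ and $D\in\mathcal{F}(Q_3)$.

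For the converse, I would induct on the isometric dimension of $D$. If $D=C$, then $D$ is an even cycle, which is a disk via an arrangement of concurrent pseudolines (and if $|C|=4$, then automatically $D=C_4$ since $4$-cycles are always convex in partial cubes). Otherwise $D\ne C$, and some $\Theta$-class $E_i$ of $D$ must cross $C$: if $E_i$ missed $C$, one halfspace of $E_i$ would be a proper convex subgraph of $D$ containing $C$, contradicting $D=\conv(C)$. Let $D':=\pi_i(D)$ and $C':=\pi_i(C)$. Since $E_i$ crosses $C$, Lemma~\ref{contraction-ChKnMa}(ii) yields $D'=\pi_i(\conv(C))=\conv(C')$, and $C'$ is an isometric cycle of length $|C|-2$ in $D'\in\mathcal{F}(Q_3)$ (or degenerates to $K_2$, in which case $D'=K_2$ is trivially a disk). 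Thus $D'$ is a pseudo-disk of strictly smaller isometric dimension, hence a disk by the induction hypothesis, represented by some pseudoline arrangement $U'$.

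The main obstacle is to recover $D$ from $D'$ by inserting a single new pseudoline into $U'$. Let $(G^1,G^0,G^2)$ be the cover associated to the isometric expansion $\psi_i\colon D'\to D$, so that $G^0\cong \partial D_i^-\cong \partial D_i^+$. Proposition~\ref{expansion-Qd+1} gives $\vcd(G^0)\le 1$, and by Proposition~\ref{virtual_isometric_tree}, $G^0$ is a virtual isometric tree in $D'$. The key claim is that $G^0$ is actually an isometric path in $D'$ with both endpoints on the boundary cycle $C'$: the endpoints come from the two antipodal edges of $E_i$ on $C$, which contract to two vertices of $C'$ necessarily lying in $G^0$, while both connectedness and the absence of branching are forced by $D=\conv(C)$ (every vertex of $\partial D_i^{\pm}$ lies on a shortest path between the two $E_i$-edges of $C$), with Corollary~\ref{SKnCopie} and Proposition~\ref{gatedSKn} used to preclude the $SK_4$-like substructures that a branching in $G^0$ would generate in a graph of $\mathcal{F}(Q_3)$. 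Once $G^0$ is identified as such a path, interpreting it as the trace of a new pseudoline $\ell$ crossing every pseudoline of $U'$ exactly once makes $U'\cup\{\ell\}$ a pseudoline arrangement whose region graph is $D$, completing the induction. The ``in particular'' clause is then immediate: for any isometric cycle $C$ of $G\in\mathcal{F}(Q_3)$, $\conv(C)$ is by definition a pseudo-disk in $\mathcal{F}(Q_3)$, hence a disk by the equivalence, hence an $\AOM$ of rank at most $2$.
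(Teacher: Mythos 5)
Your strategy---induct on the isometric dimension, contract a $\Theta$-class $E_i$ crossing $C$, and realize the inverse isometric expansion as the insertion of a new pseudoline into an arrangement representing $D'=\pi_i(D)$---is genuinely different from the paper's proof, which never reconstructs an arrangement directly: the paper first shows, via the antipodality claim $A_D=\partial D$ and a Helly-type argument for arcs of a circle, that a pseudo-disk is an \emph{affine} partial cube whose ambient antipodal graph lies in $\mathcal{F}(Q_4)$, and then invokes the excluded-pc-minor characterization of OMs from \cite{KnMa} to conclude that this ambient graph is an OM of rank $2$, whence $D$ is a rank-$2$ AOM, i.e., a disk. Your ``if'' direction and the reduction to $D'$ are fine, but the route back from $D'$ to $D$ has a genuine gap at precisely the step you label the key claim.

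The claim that $G^0\cong\partial D_i^{\pm}$ is an isometric \emph{path} of $D'$ is false, already for honest disks: whenever the pseudoline $\ell_i$ passes through a point where at least two other pseudolines cross, the regions bordering $\ell_i$ on one side form a disconnected set. Concretely, take three concurrent pseudolines and a fourth generic one; the region graph $D$ is a pseudo-disk with $|\partial D|=8$ and $D\neq\partial D$ (so this is not your base case), and contracting the class of one of the three concurrent lines yields a $G^0$ inducing $K_1\cup K_2$ (a vertex at distance $2$ from an edge). Moreover one cannot always escape by choosing $E_i$ better: in the arrangement of the six lines spanned by four generic points, every line passes through two triple points, so every choice of $E_i$ gives a disconnected $G^0$. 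Thus the statement you would actually need is that $G^0$ is an induced subgraph of a single shortest path joining the two antipodal vertices $a=\pi_i(x)=\pi_i(y)$ and $b=\pi_i(z)=\pi_i(w)$ of $D'$ and contains $a$ and $b$; your parenthetical appeal to Corollary~\ref{SKnCopie} and Proposition~\ref{gatedSKn} to rule out branchings is not a proof of this (note that a convex $SK_4$ is not by itself incompatible with $D\in\mathcal{F}(Q_3)$---excluding it is exactly what the paper's Claim~\ref{lem:diskAOM} labours to do---so you would have to derive a contradiction with $D=\conv(C)$ rather than with the VC-dimension). Finally, even granting the corrected claim, the last step---that an isometric expansion of $D'$ along such a virtual path is realized by an actual pseudoline $\ell$ whose gaps match the multiple crossing points of $U'$, with $G^1,G^2$ equal to the two closed sides of $\ell$ and with the region graph of $U'\cup\{\ell\}$ isomorphic to $D$---is the content of a rank-$2$ topological representation theorem for single-element extensions and cannot be disposed of by ``interpreting''. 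As written, the proposal establishes only the easy direction.
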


\begin{proof} The fact that disks are pseudo-disks follows from the next claim:

\begin{claim}\label{lem:AOMdisk}
 If $D\in\mathcal{F}(Q_3)$ is a disk, then $D$ is the convex hull of an isometric cycle $C$ of $D$.
\end{claim}
\begin{proof}
By definition, $D$ is the region graph of an arrangement $\mathcal A$ of pseudolines. The cycle $C$ is obtained by traversing the unbounded cells
of the arrangement in circular order, i.e., $C=\partial D$. This cycle $C$ is isometric
in $D$ because the regions corresponding to any two opposite vertices $v$ and $-v$ of $C$ are separated by all pseudolines of $\mathcal A$, thus $d_D(v,-v)=|\mathcal A|$.
Moreover, $\conv(C)=D$ because for any other vertex $u$ of $D$, any pseudoline
$\ell\in \mathcal A$ separates exactly one of the regions corresponding to $v$ and $-v$ from the region corresponding to $u$, whence $d_D(v,u)+d_D(u,-v)=d_D(v,-v)$.
\end{proof}

The remaining part of the proof is devoted to prove that any pseudo-disk is a disk. Let $D$ be a pseudo-disk with boundary $C$. Let $A_D:=\{v\in D: v \text{ has an antipode}\}$. As before, for a $\Theta$-class $E_i$ of $D$,
by $D^+_i$ and $D^-_i$ we denote the complementary halfspaces of $D$ defined by $E_i$.

\begin{claim}\label{lem:cycle}
 If $D$ is a pseudo-disk with boundary $C$, then $A_D=C$.
\end{claim}

\begin{proof} Clearly, $C\subseteq A_D$. To prove $A_D\subseteq C$, suppose by way of contradiction that $v,-v$ are antipodal vertices of $D$ not belonging to $C$.
Contract the $\Theta$-classes until $v$ is adjacent to a vertex $u\in C$, say via an edge in class $E_i$ (we can do this because all such classes crosses $C$ and
by Lemma \ref{contraction-ChKnMa}(ii) their contraction will lead to a disk). Let $u\in D^+_i$ and $v\in D^-_i$.
Since $D=\conv(C)$, the $\Theta$-class $E_i$ crosses $C$. Let $xy$ and $zw$ be the two opposite edges of $C$ belonging to $E_i$ and let $x,z\in D^+_i, y,w\in D^-_i$. Let $P,Q$ be two shortest paths in $D^-_i$
connecting $v$ with $y$ and $w$, respectively. Since the total length of $P$ and $Q$ is equal to the shortest path of $C$ from $x$ to $z$ passing through $u$, the paths $P$ and $Q$ intersect only in $v$.
Extending $P$ and $Q$, respectively within $D^-_i\cap C$ until $-u$, yields shortest paths $P', Q'$ that are crossed by all $\Theta$-classes except $E_i$. Therefore, both such paths can be extended to
shortest $(v,-v)$-paths by adding the edge $-u-v$ of $E_i$. Similarly to the case of $v$, there are shortest paths $P'', Q''$ from the vertex $-v\in D^+_i$ to the vertices $x,z\in C\cap D^+_i$. Again,
$P''$ and $Q''$ intersect only in $-v$. Let $E_j$ be any $\Theta$-class crossing $P$ and $E_k$ be any $\Theta$-class crossing $Q$. We assert that the set $S:=\{ u,v,x,y,z,w,-u,-v\}$ of vertices of $D$
shatter $\{ i,j,k\}$, i.e., that contracting all $\Theta$-classes except $E_i,E_j$, and $E_k$ yields a forbidden $Q_3$. Indeed, $E_i$ separates $S$ into the sets $\{ u,x,-v,z\}$ and $\{ v,y,-u,w\}$, $E_j$ separates $S$ into the
sets $\{ x,y,-v,-u\}$ and $\{ u,v,z,w\}$, and $E_k$ separates $S$ into the sets $\{ u,v,x,y\}$ and $\{ -v,-u,z,w\}$. This  contradiction shows that $A_D\subseteq C$, whence $A_D=C$.
\end{proof}

\begin{claim}\label{lem:affine}
 If $D$ is a pseudo-disk with boundary $C$, then $D$ is an affine partial cube. Moreover, there exists an antipodal partial cube $D'\in \mathcal{F}(Q_{4})$ containing $D$ as a halfspace. 
\end{claim}
\begin{proof} First we show that $D$ is affine. Let $u,v\in D$. Using the characterization of affine partial cubes provided by \cite[Proposition 2.16]{KnMa}
we have to show that for all vertices $u,v$ of $D$ one can find $w,-w\in A_D$ such that the intervals $I(w,u)$ and $I(v,-w)$ are not crossed by the same $\Theta$-class of $D$.
By Claim~\ref{lem:cycle} this is equivalent to finding such $w,-w$ in $C$. Let $I$ be the index set of all $\Theta$-classes crossing  $I(u,v)$. Without loss of generality assume that $u\in D_i^+$ (and therefore $v\in D^-_i$)
for all $i\in I$. We assert that $(\bigcap_{i\in I}D_i^+)\cap C\neq \emptyset$. Then any vertex from this intersection can play the role of $w$.

For $i\in I$, let $C^+_i=C\cap D^+_i$ and $C^-_i=C\cap D^-_i$; $C^+_i$ and $C^-_i$ are two disjoint shortest paths of $C$ covering all vertices of $C$.
Viewing $C$ as a circle, $C^+_i$ and $C^-_i$ are disjoint arcs of this circle.  Suppose by way of contradiction that $\bigcap_{i\in I}C_i^+=\bigcap_{i\in I}D_i^+\cap C=\emptyset$. By the Helly property for arcs of a circle,
there exist  three classes $i,j,k\in I$ such that the paths $C^+_i,C^+_j,$ and $C^+_k$ pairwise intersect, together cover all the vertices and edges of the cycle $C$, and all three have empty intersection.
This implies that $C$ is cut into 6 nonempty paths: $C_i^+\cap C_j^+\cap C_k^-$, $C_i^+\cap C_j^-\cap C_k^-$, $C_i^+\cap C_j^-\cap C_k^+$, $C_i^-\cap C_j^-\cap C_k^+$, $C_i^-\cap C_j^+\cap C_k^+$, and $C_i^-\cap C_j^+\cap C_k^-$.
Recall also that $u\in D_i^+\cap D_j^+\cap D_k^+$ and $v\in D_i^-\cap D_j^-\cap D_k^-$. But then the six paths partitioning $C$ together with $u,v$ will shatter the set $\{ i,j,k\}$, i.e.,  contracting all $\Theta$-classes except $i,j,k$
yields a forbidden $Q_3$.

Consequently, $D$ is an affine partial cube, i.e., $D$ is a halfspace of an antipodal partial cube $G$, say $D=G^+_i$ for a $\Theta$-class $E_i$. Suppose that $G$ can be contracted to the 4-cube $Q_4$. If $E_i$ is a
coordinate of $Q_4$ (i.e., the class $E_i$ is not contracted), since $D=G^+_i$, we obtain that $D$ can be contracted to $Q_3$, which is impossible because $D\in {\mathcal F}(Q_3)$. Therefore $E_i$ is contracted.
Since the contractions of $\Theta$-classes commute, suppose without loss of generality that $E_i$ was contracted last. Let $G'$ be the partial cube obtained at the step before contracting  $E_i$. Let $D'$
be the isometric subgraph of $G'$ which is the image of $D$ under the performed contractions. Since the property of being a pseudo-disk is preserved by contractions, $D'$ is a pseudo-disk, moreover $D'$ is one of the two halfspaces
of $G'$ defined by the class $E_i$ restricted to $G'$. Analogously, by Lemma \ref{antipodal-contraction}
antipodality is preserved by contractions, whence $G'$ is an antipodal partial cube such that $\pi_i(G')=Q_4$.  This implies that $G'$ was obtained from $H:=Q_4$ by an isometric antipodal
expansion $(H^1,H^0,H^2)$. Notice that one of the isometric subgraphs $H^1$ or $H^2$ of the 4-cube $H$, say $H_1$ coincides with the disk $D'':=\pi_i(D')$. Since $H$ is antipodal, by \cite[Lemma 2.14]{KnMa},
$H_0$ is closed under antipodes in $Q_4$ and $-(H_1\setminus H_0)=H_2\setminus H_0$. Since $H_0$ is included in the isometric subgraph $H_1=D''$ of $H$, $H_0$ is closed under antipodes also in $D''$.
By Claim~\ref{lem:cycle} we obtain $H_0=A_{D''}=\partial D''$. Consequently, $H_0$ is an isometric cycle of  $H=Q_{4}$ that separates $Q_{4}$ in two sets of vertices.
However, no isometric cycle of $Q_4$ separates the graph.
\end{proof}

\begin{figure}[htb]
\centering
\includegraphics[width=.25\textwidth]{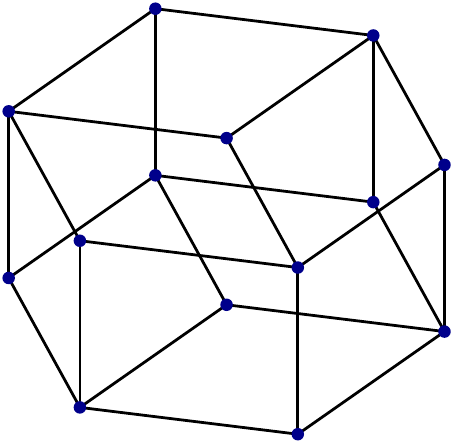}
\caption{An OM containing $Q_3^-$ as a halfspace.}
\label{fig:OM}
\end{figure}

If $D\notin\mathcal{F}(Q_3)$ is the convex hull of an isometric cycle, then $D$ is not necessarily affine, see $X_4^5$ in Figure~\ref{fig:COMobstructions}. On the other hand,
$SK_4\in\mathcal{F}(Q_3)$ is affine but is not a pseudo-disk. Let us introduce the distinguishing feature.

\begin{claim}\label{lem:diskAOM}
If $D$ is a pseudo-disk with boundary $C$, then $D$ is a disk, i.e., the region graph of a pseudoline arrangement.
\end{claim}
\begin{proof}
By Claim~\ref{lem:affine} we know that $D$ is the halfspace of an antipodal partial cube $G$. Suppose by contradiction that $G$ is not an $\OM$. By~\cite{KnMa} $G$ has a minor $X$ from the family $\mathcal{Q}^{-}$.
Since the members of this class are non-antipodal, to obtain $X$ from $G$ not only contractions but also restrictions are necessary. We perform first all contractions $I$ to obtain a pseudo-disk $D':=\pi_I(D)\in \mathcal{F}(Q_3)$
that is a halfspace of the antipodal graph $G':=\pi_I(G)$. By the second part of Claim~\ref{lem:affine} we know that $G'\in\mathcal{F}(Q_4)$. Now, since $G'$ contains $X$ as a proper convex subgraph, by Lemma~\ref{lem:antipodal}
we get $X\in\mathcal{F}(Q_3)$. Since $SK_4$ is the only member of the class $\mathcal{Q}^-$ containing $SK_4$ as a convex subgraph, by Proposition~\ref{prop:excludedminors}, we obtain $X=SK_4$.
Assume minimality in this setting, in the sense that any further contraction
destroys all copies of $X$ present in $D'$. We distinguish two cases.

First, suppose that there exists a copy of $X$ which is a convex subgraph of $D'$. Let $n\geq 4$ be maximal such that there is a convex $H=SK_n$ in $D'$ extending a convex copy of $X$. By Proposition~\ref{gatedSKn}, $H$ is gated. If $H\ne D'$, there exists a $\Theta$-class $E_i$ of $D'$ not crossing $H$. Contracting $E_i$, by Lemma \ref{contraction-ChKnMa}(iii) we will obtain a gated full subdivision $\pi_i(H)=SK_n$ contrary to the minimality in the choice of $D'$. Therefore $D'=H=SK_n$,  but it is easy to see that all $SK_n, n\ge 4,$ are not pseudo-disks, a contradiction.

Now,  suppose that no copy of $X$ is  a convex subgraph of $D'$. Since $G'$ contains $X$ as a convex subgraph, $D'$ is a halfspace of $G'$ (say $D'=(G')^+_i$) defined by a $\Theta$-class $E_i$, and $G'$ is an antipodal partial cube, we conclude that $E_i$ crosses all convex copies $H$ of $X=SK_4$.
Then $E_i$ partitions $H$ into a 6-cycle $C$ and  a $K_{1,3}$ such that all edges between them belong to $E_i$.
The antipodality map of $G'$  maps the vertices of $(G')^+_i$ to vertices of $(G')^-_i$ and vice-versa. Therefore in $D'$  there must be a copy of $K_{1,3}$ and a copy of $C=C_6$, and both such copies belong to the boundary $\partial (G')^+_i$.
The antipodality map is also edge-preserving. Therefore, it maps edges of $E_i$ to edges of $E_i$ and vertices of $(G')^+_i\setminus \partial (G')^+_i$
to vertices of $(G')^-_i\setminus \partial (G')^-_i$. Consequently,  all vertices of $\partial (G')^-_i$
have antipodes in the pseudo-disk $D'=(G')^+_i$ and their antipodes also belong to $\partial (G')^+_i$. This and Claim~\ref{lem:cycle} imply that
$\partial (G')^+_i\subset A_{D'}=\partial D'$. Therefore the isometric cycle $\partial D'$ contains an isometric copy of $C_6$, whence $\partial D'=C_6$. Since $\partial D'$ also contains the leafs of a $K_{1,3}$ we conclude that the pseudo-disk
$D'$ coincides with $Q^-_3$.  However, the only antipodal partial cube containing $Q_3^-$ as a halfspace is depicted in Figure~\ref{fig:OM} and it is an $\OM$, leading to a contradiction.
\end{proof}

Note that Claim~\ref{lem:diskAOM} generalizes Lemma~\ref{6cycle}. Together with Claim~\ref{lem:AOMdisk} it yields that pseudo-disks are disks, i.e., tope graphs of $\AOM$s of rank two,  concluding the proof of Proposition \ref{prop:disks}.
\end{proof}

\subsection{Gated hulls of long isometric cycles}

By Proposition \ref{prop:disks} disks and pseudo-disks are the same, therefore, from now on we use the name ``disk'' for both.
We continue by showing that in two-dimensional partial cubes all disks with boundary of length $>6$  are gated.

\begin{proposition} \label{gatedcycle} If $D$ is a disk  of $G\in {\mathcal F}(Q_3)$ and $|\partial D|>6$, then $D$ is a gated subgraph of $G$.
In particular, convex long cycles of $G$ are gated.
\end{proposition}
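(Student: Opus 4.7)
My plan is to argue by induction on $|V(G)|$, in the spirit of the minimal counterexample arguments used earlier, e.g., in Proposition~\ref{gatedhullC6}. Take $G$ to be a smallest graph in $\mathcal{F}(Q_3)$ admitting a disk $D$ with $|\partial D|>6$ that is not gated. If $D=G$ then $D$ is trivially gated, so $D\subsetneq G$. Since $D=\conv_G(\partial D)$ is convex in $G$, it is an intersection of halfspaces, and as $D\ne G$ at least one $\Theta$-class $E_j$ of $G$ does not cross $D$.

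Next I would contract $E_j$: set $G':=\pi_j(G)\in\mathcal{F}(Q_3)$ and $D':=\pi_j(D)$. Since $E_j$ does not cross $D$, $\pi_j$ restricts to a bijection between $D$ and $D'$, and in particular between $\partial D$ and $\pi_j(\partial D)$. Combining Lemma~\ref{contraction-ChKnMa}(i) and (ii), $D'$ is convex in $G'$ and equals $\conv_{G'}(\pi_j(\partial D))$, and $\pi_j(\partial D)$ is an isometric cycle of length $|\partial D|>6$ in $G'$. By Proposition~\ref{prop:disks}, $D'$ is a disk of $G'$ with $|\partial D'|>6$, so by the induction hypothesis $D'$ is gated in $G'$.

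Given any $v\in G$, let $v^*:=\pi_j(v)$, let $w^*$ be the gate of $v^*$ in $D'$, and let $w\in D$ be the unique preimage of $w^*$ under the bijection $\pi_j|_D$. I claim that $w$ is the gate of $v$ in $D$, contradicting the choice of $G$. Without loss of generality $D\subseteq G^+_j$; since $G$ is a partial cube, for any $a,b\in V(G)$ one has $d_G(a,b)=d_{G'}(\pi_j(a),\pi_j(b))$ if $E_j$ does not separate $a$ and $b$, and $d_G(a,b)=d_{G'}(\pi_j(a),\pi_j(b))+1$ otherwise. A two-case split on whether $v\in G^+_j$ (no pair among $v,w,u$ is separated by $E_j$) or $v\in G^-_j$ ($E_j$ separates $v$ from both $w$ and $u$, but not $w$ from $u$), combined with the gate identity $d_{G'}(v^*,\pi_j(u))=d_{G'}(v^*,w^*)+d_{G'}(w^*,\pi_j(u))$, yields $d_G(v,u)=d_G(v,w)+d_G(w,u)$ for every $u\in D$ after a one-line cancellation of the extra~$+1$.

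The main subtlety is precisely why the hypothesis $|\partial D|>6$ is needed: it guarantees that the induction hypothesis still applies to the contracted disk $D'$, whose boundary length is preserved by contraction along a $\Theta$-class not crossing $D$. This is genuinely necessary, as the $6$-cycle $C\subset SK_4$ is a disk that is not gated, and its image in $\pi_j(SK_4)=Q_3^-$ remains a non-gated $6$-cycle, which is exactly why Proposition~\ref{gatedhullC6} had to pass through maximal full subdivisions. Apart from this, I foresee no obstacle: the argument is a clean contract-and-lift. The \emph{in particular} assertion then follows immediately, because a convex cycle $C$ of length $>6$ is itself a disk with $\partial C=C$, so the main statement applies directly.
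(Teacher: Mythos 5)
Your lifting step (recovering the gate of $v$ in $D$ from the gate of $\pi_j(v)$ in $D'$ via the identity $d_G(a,b)=d_{G'}(\pi_j(a),\pi_j(b))$ plus one when $E_j$ separates $a$ from $b$) is sound. The gap is earlier, at the claim that $D':=\pi_j(D)$ is a disk of $G'$. You invoke Lemma~\ref{contraction-ChKnMa}(i)--(ii), but part (i) requires $E_j$ to cross $D$ or be \emph{disjoint} from it, whereas all you know is that $E_j$ does not cross $D$: it may \emph{osculate} $D$, and then neither part gives what you need -- part (ii) only yields $\pi_j(\conv(\partial D))\subseteq\conv_{G'}(\pi_j(\partial D))$, with equality guaranteed only in the crossing case. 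Contracting an osculating class can genuinely destroy convexity, and your own illustrative example shows it: the convex $6$-cycle $C\subseteq SK_4$ is osculated by the fourth $\Theta$-class, and $\pi_4(C)$ is \emph{not convex} in $\pi_4(SK_4)=Q_3^-$ (its convex hull is all of $Q_3^-$). Nothing in your argument excludes the analogous degeneration when $|\partial D|>6$: a vertex $z\in G^-_j$ with no $E_j$-neighbour in $D$ can become, after contraction, a vertex of $I_{G'}(x',y')\setminus \pi_j(D)$ for $x,y\in \partial D$, so $\pi_j(D)$ need not be convex, need not equal $\conv_{G'}(\pi_j(\partial D))$, and the induction hypothesis does not apply to it. Applying the hypothesis to $\conv_{G'}(\pi_j(\partial D))$ instead does not help, since the gate of $v^*$ there may have no preimage in $D$. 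Note that the paper's proof of Proposition~\ref{gatedhullC6} explicitly treats ``$\pi_i(S)$ is not convex for every class $E_i$ not crossing $S$'' as a real case needing the full-subdivision machinery; your proof silently assumes this case away.

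This is exactly why the paper's proof of Proposition~\ref{gatedcycle} takes a different route: it contracts a class that \emph{crosses} $C=\partial D$ (so Lemma~\ref{contraction-ChKnMa} applies in its crossing case and the image remains a disk), chosen to miss $\conv(P_v)$ where $P_v$ is the metric projection of the closest gateless vertex $v$; such a class exists because $|C|\ge 8$ gives at least four crossing classes while $|P_v|\le 3$ forces $\conv(P_v)$ to be crossed by at most three. The price is that the boundary may shorten to a $6$-cycle, a degenerate case then handled through Proposition~\ref{gatedhullC6}, gated full subdivisions and peripheral expansions. To rescue your contract-and-lift argument you would need to prove that some $\Theta$-class not crossing $D$ contracts $D$ to a convex subgraph; that assertion is essentially as hard as the gatedness you are trying to establish, so the gap is at the heart of the matter rather than a technicality.
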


\begin{proof} Let $G$ be a minimal two-dimensional partial cube in which the assertion does not hold. Let $D$ be a non-gated disk of $G$ whose boundary $C:=\partial D$ is a long isometric cycle.
Let $v$ be a vertex of $G$ that has no gate in $D$ and is as close as possible to $D$, where $d_G(v,D)=\min \{ d_G(v,z): z\in D\}$. 
We  use some notations from the proof of \cite[Proposition 1]{ChKnMa}. Let $P_v:=\{ x\in D: d_G(v,x)=d_G(v,D)\}$ be the \emph{metric projection}
of $v$ to $D$. Let also $R_v:=\{ x\in D: I(v,x)\cap D=\{ x\}\}.$ Since $D$ is not gated, $R_v$ contains at least two vertices.
Obviously, $P_v\subseteq R_v$ and the vertices of $R_v$ are pairwise nonadjacent. We denote the vertices of $P_v$ by $x_1,\ldots,x_k$. For any  $x_i\in P_v$, let
$v_i$ be a neighbor of $v$ on a shortest $(v,x_i)$-path. By the choice of $v$, each  $v_i$ has a gate in $D$. By the definition
of $P_v$, $x_i$ is the gate of $v_i$ in $D$. This implies that the vertices $v_1,\ldots,v_k$ are pairwise distinct. Moreover, since
$x_i$ is the gate of $v_i$ in $D$, for any two distinct vertices $x_i,x_j\in P_v$, we have $d_G(v_i,x_i)+d_G(x_i,x_j)=d_G(v_i,x_j)\le 2+d_G(v_j,x_j)$.
Since $d_G(x_i,v_i)=d_G(x_j,v_j)$, necessarily $d_G(x_i,x_j)=2$.

We assert that any three distinct vertices $x_j,x_k,x_\ell \in P_v$ do not have a common neighbor. Suppose by way of contradiction that there exists a vertex $x$ adjacent to $x_j,x_k,x_\ell$. Then $x$ belongs to $D$ by convexity of $D$ and
$x_j,x_k,x_\ell\in I(x,v)$ since $x_j,x_k,x_\ell \in P_v$. Let $E_j$ be the $\Theta$-class of the edge $v_jv$ and let $C_k$ be the cycle of $G$ defined by a $(v,x_j)$-shortest path $P$ passing via $v_j$, the 2-path $(x_j,x,x_k)$, and a shortest $(x_k,v)$-path $Q$ passing via $v_k$. Then $E_j$ must contain another edge of $C_k$. Necessarily this cannot be an edge of $P$. Since $v$ is a closest vertex to $D$ without a gate, this edge cannot be an edge of $Q$. Since $x_j\in I(x,v)$, this edge is not $xx_j$. Therefore the second edge of $E_j$ in $C_k$ is the edge $xx_k$. This implies that $v$ and $x_k$ belong to the same halfspace defined by $E_j$, say $G^+_j$, and $v_j$ and $x$ belong to its complement $G^-_j$. Using an analogously defined cycle $C_{\ell}$, one can show that the edge $xx_{\ell}$ also belong to $E_j$, whence the vertices $x_k$ and $x_{\ell}$ belong to the same halfspace $G^+_j$.
Since $x\in I(x_k,x_{\ell})$ and $x\in G_j^-$, we obtain a contradiction with convexity of $G^+_j$. Therefore, if $x_j,x_k,x_\ell \in P_v$, then $\conv(x_j,x_k,x_\ell)$ is an isometric 6-cycle of $D$. In particular, this implies that each of the intervals $I(x_j,x_k),I(x_k,x_{\ell}), I(x_j,x_{\ell})$ consists of a single shortest path.

Next we show that $|P_v|\le 3$. Suppose by way of contradiction that $|P_v|\ge 4$ and pick the vertices $x_1,x_2,x_3,x_4\in P_v$. Let $H$ be the subgraph of $D$ induced by the union of the intervals $I(x_j,x_k)$, with $j,k\in \{ 1,2,3,4\}$. Since these intervals are 2-paths intersecting only in common end-vertices, $H$ is isomorphic to $SK_4$ with $x_1,x_2,x_3,x_4$ as original vertices. Since $D$ is a two-dimensional partial cube, one can directly check that $H$ is an isometric subgraph of $D$. Since the intervals $I(x_j,x_k)$ are interiorly disjoint paths, $H=SK_4$ cannot be extended to $SK_4^*$. By Claim \ref{convexSKn}, $H=SK_4$ is a convex subgraph of $D$. Since $D$ is an AOM of rank 2 and thus a COM of rank 2, by Proposition \ref{prop:excludedminors}, $D$ cannot contain $SK_4$ as a pc-minor. This contradiction shows that $|P_v|\le 3$.


Let $S:= \conv(P_v)$. Since $|P_v|\le 3$ and $d_G(x_j,x_k)=2$ for any two vertices $x_j,x_k$ of $P_v$, there exists at most three $\Theta$-classes crossing $S$. Since the length of the isometric cycle $C$ is at least 8, there exists a $\Theta$-class
$E_i$ crossing $C$ (and $D$) and not crossing $S$.  We assert that $v$ and the vertices of $P_v$  belong to the same halfspace defined by $E_i$. Indeed, if $E_i$ separates $v$ from $S$, then for any $j$, $E_i$ has an edge on any shortest $(v_j,x_j)$-path. This contradicts the fact that $x_j$ is the gate of $v_j$ in $D$. Consequently, $v$ and the set $S$ belong to the same halfspace defined by $E_i$.
Consider the graphs $G':=\pi_i(G)$, $D':=\pi_i(D)$ and the cycle $C':=\pi_i(C)$.  By Lemma \ref{contraction-ChKnMa}(i), $D'$ is a disk with boundary $C'$ (and thus an $\AOM$) of the two-dimensional partial cube $G'$.
Notice that the distance in $G'$ between $v'$ and the vertices $x'_j$ of $P_v$ is the same as the distance
between $v$ and $x_j$ in $G$ and that the distance between $v'$ and the images of vertices of $R_v\setminus P_v$ may eventually decrease by 1.
This implies that $D'$ is not gated. By minimality of $G$, this is possible only if $C'$ is a 6-cycle. In this case, by Proposition \ref{gatedhullC6}, we conclude that $D'$ is included in a maximal full subdivision $H'=SK_n$, which is a gated subgraph of $G'$.
The graph $G$ is obtained from $G'$ by an isometric expansion $\psi_i$ (inverse to $\pi_i$).
By Lemma \ref{expansionSKn}, $\psi_i$, restricted to $H'$, is a peripheral expansion along an isometric tree of $H'$. This implies that in $G$ the convex $\AOM$ $D$ is contained in a full
subdivision of $K_n$, contrary to the assumption that $D$ is the convex hull of the isometric cycle $C$ of length at least 8.
\end{proof}

Summarizing  Propositions \ref{gatedhullC6},  \ref{prop:disks}, and \ref{gatedcycle}, we obtain the following results:

\begin{theorem} \label{FS+AOM} Let $G$ be a two-dimensional partial cube and $C$ be an isometric cycle of $G$. If $C=C_6$, then the gated hull of $C$ is either $C$, $Q^-_3$, or a maximal full subdivision. If otherwise $C$ is long, then $\conv(C)$ is a gated disk.
\end{theorem}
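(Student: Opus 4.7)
The plan is to derive Theorem~\ref{FS+AOM} as an immediate compilation of the three preceding results, splitting on the length of the isometric cycle $C$. Essentially no new combinatorial content is required; the work has already been done in Propositions~\ref{gatedhullC6}, \ref{prop:disks}, and \ref{gatedcycle}. I will simply show that the two clauses of the theorem are direct readings of those propositions.

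For the case $|C|=6$, I will apply Proposition~\ref{gatedhullC6}, which states that $\gate(C)$ is one of $C$, $Q_3^-$, or a full subdivision $SK_n$ with $n\geq 4$. To match the precise wording of the theorem in the third alternative, I need to verify that this full subdivision is actually \emph{maximal}. This is built into the proof of Proposition~\ref{gatedhullC6}: the argument there picks the (unique) maximal full subdivision $H \supseteq C$, invokes Proposition~\ref{gatedSKn} to see $H$ is gated, and then checks $\gate(C) = H$ by showing every vertex of $H\setminus C$ lies in the gated hull of $C$. So the ``maximal'' qualifier is free.

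For the case when $C$ is long, i.e.\ $|C|\geq 8$, set $D:=\conv(C)$. Since $C$ is isometric and $D$ is its convex hull, $D$ is by definition a pseudo-disk of $G$ with $C$ as its distinguished boundary cycle; Claim~\ref{lem:cycle} (applied inside the pseudo-disk $D$) identifies $\partial D = C$. Then Proposition~\ref{prop:disks} upgrades $D$ from a pseudo-disk to a disk (equivalently, the tope graph of an $\AOM$ of rank $2$), which gives the ``disk'' half of the conclusion. Finally, because $|\partial D| = |C| \geq 8 > 6$, Proposition~\ref{gatedcycle} applies and yields that $D$ is gated in $G$, so $\conv(C)$ is a gated disk as claimed. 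The only tiny friction in the whole argument is the cosmetic maximality check in the 6-cycle case; no genuine obstacle arises, since the three propositions are precisely designed to cover the two size regimes of the theorem.
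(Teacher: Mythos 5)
Your proposal is correct and matches the paper exactly: the paper derives Theorem~\ref{FS+AOM} as a direct summary of Propositions~\ref{gatedhullC6}, \ref{prop:disks}, and \ref{gatedcycle}, with the ``maximal'' qualifier in the 6-cycle case coming, as you note, from the proof of Proposition~\ref{gatedhullC6} itself. No further comment is needed.
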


\begin{corollary} Maximal full subdivisions, convex disks with long cycles as boundaries (in particular, long convex cycles) are gated subgraphs in two-dimensional partial cubes.
\end{corollary}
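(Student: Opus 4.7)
The plan is to read this corollary as a direct repackaging of Propositions \ref{gatedSKn} and \ref{gatedcycle} (equivalently, Theorem \ref{FS+AOM}); the substantive work has already been done in those results, and the main ``obstacle'' here is merely bookkeeping about the terminology.

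For the assertion on maximal full subdivisions, let $H = SK_n$ with $n \ge 4$ be a maximal full subdivision in $G \in \mathcal{F}(Q_3)$. Maximality means $H$ is not properly contained in any larger $SK_m$. The hypothesis of Proposition \ref{gatedSKn} also requires convexity, and here Claim \ref{convexSKn} does the right work: either $H$ is convex, or $H$ extends in $G$ to $SK^*_n$. Under the sensible reading of ``maximal''---which excludes the degenerate extension to $SK^*_n$, since $v_\varnothing$ together with the original vertices of $H$ would otherwise witness that $H$ is not the largest full-subdivision-like object sitting at that location---$H$ is convex, and Proposition \ref{gatedSKn} then yields that $H$ is gated. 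The boundary case $n = 3$, where $H = C_6$, is absorbed by the cycle discussion below via Proposition \ref{gatedhullC6}.

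For convex disks with long boundary, let $D$ be such a disk of $G$, so that $\partial D$ is an isometric cycle of $G$ with $|\partial D| \ge 8 > 6$ and $D = \conv(\partial D)$ by the very definition of a pseudo-disk (= disk, by Proposition \ref{prop:disks}). Proposition \ref{gatedcycle} then applies verbatim and gives that $D$ is gated. The parenthetical special case of a long convex cycle $C$ is the instance $D = C$ (so that $\partial D = D = C$) and is explicitly stated in the second sentence of Proposition \ref{gatedcycle}. The only subtle point, already flagged, is making the notion of ``maximal full subdivision'' compatible with the convexity hypothesis of Proposition \ref{gatedSKn}, which is resolved by Claim \ref{convexSKn}; once this is granted, the proof is pure bookkeeping.
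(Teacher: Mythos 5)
Your overall reading is correct and matches the paper's: no separate proof is given for this corollary, which is meant as an immediate restatement of Propositions~\ref{gatedSKn} and~\ref{gatedcycle} (as collected in Theorem~\ref{FS+AOM}). Your treatment of the second clause is fine: a convex disk with long boundary is by definition (and Proposition~\ref{prop:disks}) the convex hull of a long isometric cycle, so Proposition~\ref{gatedcycle} applies verbatim, with long convex cycles as the case $D=C$.

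The step that does not survive scrutiny is your derivation of convexity from maximality. You argue that a maximal $H=SK_n$ cannot extend to $SK^*_n$ because $v_{\varnothing}$ would witness that $H$ is not the largest ``full-subdivision-like object''. But $SK^*_n$ is not a full subdivision and contains no full subdivision properly larger than $H$, so inclusion-maximality among full subdivisions does not exclude this branch of the dichotomy in Claim~\ref{convexSKn}. Concretely, take $G=SK^*_4\in\mathcal{F}(Q_3)$ (Lemma~\ref{SKn}) and $H=SK_4=G\setminus\{v_{\varnothing}\}$: then $H$ is an inclusion-maximal (and isometric) full subdivision of $G$, yet $v_{\varnothing}\in I(u_i,u_j)$ for any two original vertices, so $H$ is not convex and a fortiori not gated (indeed $v_{\varnothing}$ has no gate in $H$, since such a gate would lie in $I(v_{\varnothing},u_1)\cap I(v_{\varnothing},u_2)=\{v_{\varnothing}\}$). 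So your intermediate claim is false as stated. The repair is interpretive rather than mathematical: ``maximal full subdivision'' in this corollary must be read with the hypotheses of Proposition~\ref{gatedSKn} built in, i.e., a \emph{convex} $SK_n$ with $n\ge 4$ not included in a larger full subdivision --- exactly the cells arising as gated hulls of 6-cycles in Theorem~\ref{FS+AOM} --- and with that reading the first assertion is literally Proposition~\ref{gatedSKn}, with nothing left to check. A similar caveat applies to your remark on $n=3$: a maximal $SK_3=C_6$ need not be gated (by Proposition~\ref{gatedhullC6} its gated hull may be $Q_3^-$ or a larger full subdivision), which is precisely why the cycle clause of the corollary is restricted to long cycles.
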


\section{Completion to ample partial cubes} In this section, we prove that any partial cube $G$ of VC-dimension 2
can be completed to an ample partial cube $G^{\top}$ of VC-dimension 2. We perform this completion in two steps. First, 
we canonically extend $G$ to a partial cube $G\urcorner\in {\mathcal F}(Q_3)$ not containing convex full subdivisions.
The resulting graph $G\urcorner$ is a COM of rank 2: its cells are the gated cycles of $G$ and the 4-cycles created by extensions of full subdivisions.
Second, we transform $G\urcorner$ into an ample partial cube $(G\urcorner)\ulcorner\in {\mathcal F}(Q_3)$ by filling each gated cycle $C$
of length $\ge 6$ of $G$ (and of $G\urcorner$) by a planar tiling with squares. Here is the main result of this
section and one of the main results of the paper:

\begin{theorem}\label{thm:extendtoample}
Any $G\in\mathcal{F}(Q_3)$ can be completed to an ample partial cube $G^{\top}:=(G\urcorner)\ulcorner\in\mathcal{F}(Q_3)$.
\end{theorem}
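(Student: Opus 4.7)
My plan follows the two-step decomposition built into the statement $G^{\top}=(G\urcorner)\ulcorner$: first augment $G$ to a COM of rank~$2$ in $\mathcal{F}(Q_3)$, then fill every long gated cycle to remove all $C_6$ pc-minors.

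For Step~1, I define the operation $\urcorner$ as follows. By Proposition~\ref{gatedSKn} every maximal convex full subdivision $H=SK_n$ ($n\ge 4$) of $G$ is gated. Embed $G$ isometrically into a hypercube so that $H$ is standardly embedded (Lemma~\ref{standardSKn}), and adjoin the vertex $v_H$ whose Hamming label on the coordinates spanning $H$ is $\varnothing$, with edges exactly to the $n$ original vertices of $H$. By Claim~\ref{claimSKn} this vertex has no other possible neighbor in $G$, so the extension is well-defined. I would then check, in this order, that: (a) $G\urcorner$ embeds isometrically in the same hypercube (shortest paths from $v_H$ to any $u\in G$ factor through the gate of $u$ in $H$); (b) $G\urcorner \in \mathcal{F}(Q_3)$, using Lemma~\ref{SKn} locally and Proposition~\ref{VCdim_pc} on hyperplanes globally; (c) $G\urcorner$ contains no convex $SK_4$, hence by Corollary~\ref{SKnCopie} no $SK_4$ as a pc-minor, so by Proposition~\ref{prop:excludedminors} it is a COM of rank~$2$.

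For Step~2, the operation $\ulcorner$ fills each gated cycle of length $2k\ge 6$ in $G\urcorner$. By Theorem~\ref{FS+AOM}, such a cycle $C$ is a disk with empty interior, i.e.\ the region graph of a pencil of $k$ pseudolines meeting in a single point. Replace this pencil by a generic simple arrangement of $k$ pseudolines (no three concurrent); its region graph is a planar quadrangulation of $C$, whose new vertices and edges are adjoined to $G\urcorner$. Since each gated cycle is already a gated cell of the COM $G\urcorner$, these fillings can be carried out locally and independently. I would verify that: (a) each filling is realized as a sequence of isometric expansions along subpaths of $C$, so Proposition~\ref{pc-expansion} yields that $(G\urcorner)\ulcorner$ is a partial cube; (b) each such expansion has a path as $G^0$, which has VC-dimension~$1$, so Proposition~\ref{expansion-Qd+1} keeps us in $\mathcal{F}(Q_3)$; (c) the filled graph has no $C_6$ pc-minor, because inside each filled disk the $\Theta$-classes correspond to a generic pseudoline arrangement, so an induced $C_6$ would force three concurrent pseudolines, excluded by genericity, while outside the fillings the cell structure consists only of $SK_n^*$'s (made of $4$-cycles) and already-quadrangulated disks. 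Applying Proposition~\ref{prop:excludedminors} to $\mathcal{F}(Q_3,C_6)$ then gives that $(G\urcorner)\ulcorner$ is ample and two-dimensional.

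The main obstacle lies in the consistency of the two operations. In Step~1, I need to show that distinct maximal convex full subdivisions $H_1,H_2$ of $G$ cannot force $v_{H_1},v_{H_2}$ to coincide or create a new shattered triple of coordinates; this reduces to a local analysis of how two gated full subdivisions intersect, which is controlled by Proposition~\ref{gatedSKn} and Claim~\ref{convexSKn}. In Step~2, the delicate point is item~(c): ensuring that generic filling of a long gated cycle neither re-introduces an $SK_4$ (excluded after Step~1) nor leaves a $C_6$ pc-minor; this uses crucially that the filling is \emph{interior} to a gated cell and hence is preserved under every contraction not collapsing that cell. Once these consistency checks are established, both augmentations preserve membership in $\mathcal{F}(Q_3)$ and the final graph excludes both $Q_3$ and $C_6$ as pc-minors, yielding the desired ample completion via Proposition~\ref{prop:excludedminors}.
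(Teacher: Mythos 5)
Your overall architecture matches the paper's: first adjoin the vertex $v_{\varnothing}$ to each maximal convex full subdivision to reach a rank-$2$ COM $G\urcorner\in\mathcal{F}(Q_3,SK_4)$, then quadrangulate each long gated cycle so that the result lands in $\mathcal{F}(Q_3,C_6)$ and Proposition~\ref{prop:excludedminors} applies. Step~1 is essentially the paper's Lemma~\ref{1extension} and Proposition~\ref{prop:extendtoCOM} (the paper handles the interaction between distinct maximal full subdivisions by performing the $1$-extensions sequentially and showing that no new convex full subdivisions or long convex cycles are created, so the process is well-founded and terminates by Sauer--Shelah). The target object of Step~2 -- a quadrangulation of each gated cell corresponding to a simple pseudoline arrangement -- is also the same as the paper's.

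The genuine gap is in your justification of Step~2(a) and (b). The filling of a gated cycle $C=(v_1,\dots,v_{2k})$ is \emph{not} an isometric expansion of the current graph: the new matching edges $v_iv_i'$ lie in the \emph{existing} $\Theta$-class $E_j$ of the edges $v_{2k}v_1$ and $v_kv_{k+1}$, and the new path edges $v_i'v_{i+1}'$ lie in the existing classes of $C$, so no new $\Theta$-class is created and the isometric dimension does not change. (A peripheral isometric expansion along a subpath of $C$ would instead attach a ladder flap via a \emph{new} $\Theta$-class and would not fill the cycle.) Hence Propositions~\ref{pc-expansion} and~\ref{expansion-Qd+1} cannot be invoked as you propose, and items (a) and (b) of your Step~2 are unsupported. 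The paper's Proposition~\ref{prop:extendcycle} fills this in directly: it verifies that $G_{C,E_j}$ is a partial cube by extending the $\Theta$-classes and checking isometricity through the gate of $C$; it shows membership in $\mathcal{F}(Q_3)$ by arguing that any $Q_3$ pc-minor would have to use $E_j$ together with two further classes of $C$, contradicting that $C$ is gated; and, crucially for your claim that the fillings can be done ``locally and independently,'' it proves that the new inner cycle $C'$ is again gated and that no convex $SK_n$ is created, so that the recursion remains inside $\mathcal{F}(Q_3,SK_4)$ where Propositions~\ref{gatedhullC6} and~\ref{gatedcycle} keep guaranteeing gatedness of the cells still to be processed. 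Without some replacement for these three facts, your Step~2 does not go through.
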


\subsection{Canonical completion to two-dimensional COMs}

The {\it 1-extension} graph of a partial cube $G\in {\mathcal F}(Q_3)$ of $Q_m$  is a subgraph $G'$ of $Q_m$ obtained by taking a maximal by inclusion convex full subdivision $H=SK_n$ of $G$ such that $H$ is standardly embedded in $Q_m$ and adding to $G$ the vertex $v_{\varnothing}$.

\begin{lemma} \label{1extension} If $G'$ is the 1-extension of $G\in {\mathcal F}(Q_3)$ and $G'$ is obtained with respect to the maximal by inclusion  convex full subdivision $H=SK_n$ of $G$, then $G'\in {\mathcal F}(Q_3)$ and $G$ is an isometric subgraph of $G'$. Moreover, any convex full subdivision $SK_{r}$ with $r\ge 3$ of $G'$ is a convex full subdivision of $G$ and any convex cycle of length $\ge 6$ of $G'$ is a convex cycle of $G$.
\end{lemma}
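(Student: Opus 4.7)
The plan is to establish four claims about $G'=G\cup\{v_\varnothing\}$ in order: (i) the maximal convex full subdivision $H$ is gated in $G$; (ii) $G$ embeds isometrically in $G'$ and $G'$ in $Q_m$; (iii) $G'\in\mathcal{F}(Q_3)$; and (iv) every convex full subdivision $SK_r$ with $r\ge 3$ and every convex cycle of length $\ge 6$ in $G'$ already lives in $G$. For (i), when $n\ge 4$ this is exactly Proposition~\ref{gatedSKn}. When $n=3$, so $H=C_6$, I would invoke Proposition~\ref{gatedhullC6}: $\gate(H)$ is $H$, $Q_3^-$, or a larger full subdivision $SK_m$. Convexity of $H$ excludes $Q_3^-$ (its extra vertex sits in $\conv_{Q_m}(H)\setminus H$, violating convexity of $H$ in $G$), and the maximality of $H$ among convex full subdivisions excludes $SK_m$ for $m\ge 4$. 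In either case, a short interval calculation in the standard embedding shows that the only possible gate of $x\in G$ in $H$ is $x\cap[n]$, so gatedness is equivalent to $|x\cap[n]|\in\{1,2\}$ for every $x\in G$.

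Step (ii) then follows quickly. By Claim~\ref{claimSKn} the neighbours of $v_\varnothing$ in $G'$ are exactly $u_1,\dots,u_n$. For $x\in G$, picking $i\in x\cap[n]$ (nonempty by (i)) yields a $G'$-path $v_\varnothing\to u_i\to\cdots\to x$ of length $1+d_G(u_i,x)=|x|=d_{Q_m}(v_\varnothing,x)$, so $G'$ is isometric in $Q_m$. Any detour $u_i\to v_\varnothing\to u_j$ can be rerouted inside $H$ through $u_{i,j}$ at the same cost, so $d_{G'}(a,b)=d_G(a,b)$ for all $a,b\in G$ and $G$ is isometric in $G'$.

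For (iii), assume toward contradiction that $G'$ shatters $Y=\{i,j,k\}$. If $Y\not\subseteq[n]$, or $Y\subseteq[n]$ with $n\ge 4$, one can pick $\ell\in[n]\setminus Y$; then $u_\ell\in H$ already realizes the all-zero pattern on $Y$ in $G$, so $v_\varnothing$ (whose $Y$-pattern is also all-zero) adds nothing new and $G$ itself shatters $Y$, contradicting $G\in\mathcal{F}(Q_3)$. The remaining case is $Y=[n]=\{1,2,3\}$: then $H=C_6$ realizes only the six patterns of weights $1$ and $2$ on $Y$, missing $(0,0,0)$ and $(1,1,1)$; $v_\varnothing$ supplies $(0,0,0)$, so shattering forces some $w\in G$ with $w\supseteq\{1,2,3\}$. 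But $w\cap[3]=\{1,2,3\}$ is not a vertex of $H$, contradicting the gatedness condition from (i).

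For (iv), let $C$ be a convex cycle of length $\ge 6$ in $G'$ containing $v_\varnothing$, with $C$-neighbours $u_i,u_j$. Convexity gives $u_{i,j}\in I_{G'}(u_i,u_j)\subseteq V(C)$; but $u_{i,j}$ has only two $C$-neighbours while being adjacent in $G'$ to both $u_i$ and $u_j$, so one of the edges $u_iu_{i,j}$, $u_ju_{i,j}$ is a chord of $C$, contradicting that convex cycles in a partial cube are induced (and $|C|\ne 4$). Hence $v_\varnothing\notin C$, so $C\subseteq G$ and $C$ is convex in $G$ since $G$ is isometric in $G'$. For a convex $SK_r\subseteq G'$ with $r\ge 3$, each vertex of $SK_r$ belongs to some induced isometric $6$-cycle of $SK_r$, which is convex in $SK_r$ and hence in $G'$ by transitivity of convexity; the previous argument then excludes $v_\varnothing$ from such a $6$-cycle, so $v_\varnothing\notin SK_r$ and $SK_r\subseteq G$ is convex in $G$. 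The delicate point throughout is the case $n=3$ in (i) and (iii): the gatedness of $H$ and the analysis of the all-zero pattern both require combining the maximality hypothesis with Proposition~\ref{gatedhullC6}, rather than the cleaner Proposition~\ref{gatedSKn}.
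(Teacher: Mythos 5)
Your proof is correct and follows essentially the same route as the paper's: gatedness of $H$ (Proposition~\ref{gatedSKn}) together with Claim~\ref{claimSKn} to route shortest paths through $v_{\varnothing}$ via the gate, the replacement of $v_{\varnothing}$ by an original vertex $u_{\ell}$ with $\ell\notin\{i,j,k\}$ to rule out shattering a triple, and the forbidden $4$-cycle $(u_i,v_{\varnothing},u_j,u_{i,j})$ to exclude $v_{\varnothing}$ from convex long cycles and convex full subdivisions of $G'$. The only divergence is your separate treatment of $n=3$: the paper's proof (like Proposition~\ref{gatedSKn} and Claim~\ref{claimSKn} on which it relies) implicitly restricts to $n\ge 4$, which is the only case needed for the canonical $1$-completion, so that extra case is harmless but not required.
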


\begin{proof} Let $G$ be an isometric subgraph of $Q_m$. To show that $G'$ is an isometric subgraph of $Q_m$ it suffices to show that any vertex $v$ of $G$ can be connected in $G'$ with $v_{\varnothing}$ by a shortest path.
By Proposition \ref{gatedSKn} $H$ is a gated subgraph of $G$ and the gate $v_0$ of $v$ in $Q=\conv(H)$ belongs to $H$. This means that if $v$ is encoded by the set $A$ and $v_0$ is encoded by the set $A_0=A\cap \{ 1,\ldots,n\}$,
then either $A_0=\{ i\}$ or $A_0=\{ i,j\}$ for an original vertex $u_i$ or a subdivision vertex $u_{i,j}$.  This means that $d(v,v_0)=d(v,u_i)=|A|-1$ in the first case and $d(v,v_0)=d(v,u_{i,j})=|A|-2$ in the second case.
Since $d(v,v_{\varnothing})=|A|$, we obtain a shortest $(v,v_{\varnothing})$-path  in $G'$ first going from $v$ to $v_0$ and then from $v_0$ to $v_{\varnothing}$ via an edge or a path of length 2 of $H$.
This establishes that $G'$ is an isometric subgraph of $Q_m$. Since any two neighbors of $v_{\varnothing}$ in $H$ have distance 2 in $G$ and $v_{\varnothing}$ is adjacent in $G$ only to the original vertices of $H$, we also conclude that
$G$ is an isometric subgraph of $G'$.

Now we will show that $G'$ belongs to ${\mathcal F}(Q_3)$. Suppose by way of contradiction that the sets corresponding to some set $S$ of  8 vertices of $G'$ shatter  the set $\{ i,j,k\}$. Since $G\in {\mathcal F}(Q_3)$, one of the vertices of $S$ is the vertex $v_{\varnothing}$: namely, $v_{\varnothing}$ is the vertex whose trace on $\{ i,j,k\}$ is $\varnothing$. Thus the sets corresponding to the remaining 7 vertices of $S$ contain at least one of the elements $i,j,k$. Now, since $H=SK_n$ with $n\ge 4$, necessarily there exists an original vertex $u_{\ell}$ of $H$ with $\ell\notin \{ i,j,k\}$. Clearly, $u_{\ell}$ is not a vertex of $S$. Since the trace of $\{ \ell\}$ on $\{ i,j,k\}$ is $\varnothing$, replacing in $S$ the vertex $v_{\varnothing}$ by $u_{\ell}$ we will obtain a set of 8 vertices of $G$ still shattering the set $\{ i,j,k\}$, contrary to $G\in {\mathcal F}(Q_3)$.

It remains to show that any convex full subdivision of $G'$ is a convex full subdivision of $G$. Suppose by way of contradiction that $H'=SK_r, r\ge 3,$ is a convex full subdivision of $G'$ containing the vertex $v_{\varnothing}$. By Claim \ref{claimSKn}, in $G'$ $v_{\varnothing}$ is adjacent only to the original vertices of $H$. Hence, if $v_{\varnothing}$ is an original vertex of $H'$ then at least two original vertices of $H$ are subdivision vertices of $H'$ and if  $v_{\varnothing}$ is a subdivision vertex of $H'$ then the two original vertices of $H'$ adjacent to $v_{\varnothing}$ are original vertices of $H$. In both cases, denote those two original vertices of $H$ by $x=u_i$ and $y=u_j$. Since $H'$ is convex and $u_{i,j}$ is adjacent to $u_i$ and $u_j$, $u_{i,j}$ must belong to $H'$. But this implies that $H'$ contains the 4-cycle $(x=u_i,v_{\varnothing},y=u_j,u_{i,j})$, which is impossible in a convex full subdivision. In a similar way, using Claim \ref{claimSKn}, one can show that any convex cycle of length $\ge 6$ of $G'$ is a convex cycle of $G$.
\end{proof}


Now, suppose that we consequently perform the operation of 1-extension to all gated full subdivisions and to the occurring intermediate partial cubes. By Lemma \ref{1extension} all such isometric subgraphs of $Q_m$ have VC-dimension 2 and all occurring convex full subdivisions  are already convex full subdivisions of $G$. After a finite number of 1-extension steps (by the Sauer-Shelah-Perles lemma, after at most $\binom{m}{\leq 2}$ 1-extensions), we will get an isometric subgraph $G\urcorner$ of $Q_m$ such that $G\urcorner\in {\mathcal F}(Q_3)$, $G$ is an isometric subgraph of $G\urcorner$, and all maximal full subdivisions $SK_n$ of $G\urcorner$ are included in $SK^*_n$. We call $G\urcorner$ the {\it canonical 1-completion} of $G$.   We summarize this result in the following proposition:

\begin{proposition}\label{prop:extendtoCOM} If $G\in {\mathcal F}(Q_3)$ is an isometric subgraph of the hypercube $Q_m$, then after at most $\binom{m}{\leq 2}$ 1-extension steps, $G$ can be canonically completed to
a two-dimensional COM  $G\urcorner$ and $G$ is an isometric subgraph of  $G\urcorner$.
\end{proposition}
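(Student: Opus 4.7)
The plan is to iterate the 1-extension operation from Lemma \ref{1extension} until no convex full subdivision $SK_n$ with $n\ge 4$ survives, and then to apply the excluded-minor characterization of two-dimensional COMs from Proposition \ref{prop:excludedminors}.

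First I would define a sequence $G_0 := G, G_1, G_2, \ldots$ inductively: as long as $G_i$ contains a maximal-by-inclusion convex full subdivision $H_i = SK_{n_i}$ with $n_i \ge 4$, pick one such $H_i$, use Lemma \ref{standardSKn} to choose an isometric embedding of $G_i$ into $Q_m$ in which $H_i$ is standard, and let $G_{i+1}$ be the corresponding 1-extension, i.e., $G_i$ together with the apex vertex $v_\varnothing$ of $H_i$. Lemma \ref{1extension} immediately yields $G_{i+1} \in \mathcal{F}(Q_3)$, that $G_i$ is an isometric subgraph of $G_{i+1}$, and that every convex full subdivision $SK_r$ of $G_{i+1}$ with $r \ge 3$ is already a convex full subdivision of $G_i$.

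Next I would argue termination. The subdivision $H_i$ itself loses convexity in $G_{i+1}$: in $Q_m$ the apex $v_\varnothing$ is adjacent to every original vertex of $H_i$, so once adjoined, $v_\varnothing \in I_{G_{i+1}}(u_j, u_k)$ for any two distinct originals $u_j, u_k \in V(H_i)$, yet $v_\varnothing \notin V(H_i)$. Combined with Lemma \ref{1extension}, the collection of convex full subdivisions $SK_r$ with $r \ge 4$ in $G_{i+1}$ is a proper subset of the corresponding collection in $G_i$, so the procedure halts; call the terminal graph $G\urcorner$. For the explicit step bound, each $G_i$ belongs to $\mathcal{F}(Q_3)$, hence by Lemma \ref{VCdim_d} and the Sauer-Shelah inequality $|V(G_i)| \le \binom{m}{\le 2}$; moreover each step adjoins exactly one new vertex, because if $v_\varnothing$ were already in $G_i$, then, $G_i$ being isometric in $Q_m$, the relation $v_\varnothing \in I_{G_i}(u_j, u_k)$ would contradict convexity of $H_i$ in $G_i$. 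Therefore the number of 1-extensions is at most $\binom{m}{\le 2} - |V(G)| \le \binom{m}{\le 2}$.

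Finally I would identify $G\urcorner$ as a two-dimensional COM. By termination, $G\urcorner$ contains no convex full subdivision $SK_n$ with $n \ge 4$, so by Corollary \ref{SKnCopie} it contains no $SK_4$ as a pc-minor, i.e., $G\urcorner \in \mathcal{F}(Q_3, SK_4)$, which by Proposition \ref{prop:excludedminors} is precisely the class of two-dimensional COMs. That $G$ is an isometric subgraph of $G\urcorner$ follows by composing the chain of isometric inclusions $G_0 \hookrightarrow G_1 \hookrightarrow \cdots \hookrightarrow G\urcorner$. The mildly subtle point, which justifies the qualifier ``canonical'', is that at each step both the choice of $H_i$ and of a standard embedding are free, but the apex of any fixed isometric $SK_n$ in $Q_m$ is intrinsically characterized as the unique vertex of $Q_m$ adjacent to all $n$ of its original vertices; hence the adjoined vertex depends on $H_i$ alone, and a short commutation argument shows that $V(G\urcorner)$ equals $V(G)$ together with the apexes of all maximal convex full subdivisions $SK_n$ ($n \ge 4$) of $G$, independently of the order of extensions.
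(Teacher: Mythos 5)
Your proof is correct and follows essentially the same route as the paper: iterate 1-extensions via Lemma~\ref{1extension}, bound the number of steps by the Sauer-Shelah lemma, and identify the terminal graph as a two-dimensional COM via Corollary~\ref{SKnCopie} and Proposition~\ref{prop:excludedminors}. The only (cosmetic) difference is that you record the termination condition as the absence of convex full subdivisions $SK_n$ with $n\ge 4$, which lets you apply Corollary~\ref{SKnCopie} directly in contrapositive form, whereas the paper argues by contradiction and traces a hypothetical convex $SK_4$ back to a convex full subdivision of the original $G$; your closing remark on order-independence is not needed for the statement and is only sketched, but it does not affect the argument.
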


\begin{proof} To prove that $G\urcorner$ is a two-dimensional COM, by second assertion of Proposition \ref{prop:excludedminors} we have to prove that $G\urcorner$ belongs to $\mathcal{F}(Q_3, SK_4)=\mathcal{F}(Q_3)\cap \mathcal{F}(SK_4)$.
The fact that $G\urcorner$ belongs to $\mathcal{F}(Q_3)$ follows from Lemma \ref{1extension}. Suppose now that $G\urcorner$ contains $SK_4$ as a pc-minor. By Corollary \ref{SKnCopie}, $G\urcorner$ contains a convex subgraph $H$ isomorphic to $SK_4$.  Then $H$ extends in $G\urcorner$ to a maximal by inclusion $SK_n$, which we denote by $H'$. Since $G\urcorner\in \mathcal{F}(Q_3)$ and $H$ does not extend to $SK^*_4$, $H'$ does not extend to $SK^*_n$ either. By Claim \ref{convexSKn} and Proposition \ref{gatedSKn} applied to $G\urcorner$, we conclude that $H'$ is a convex and thus gated subgraph of $G\urcorner$. Applying the second assertion of Lemma \ref{1extension} (in the reverse order) to all pairs of graphs occurring in the construction transforming $G$ to $G\urcorner$, we conclude that $H'$ is a convex and thus gated full subdivision of $G$. But this is impossible because all maximal full subdivisions $SK_n$ of $G\urcorner$ are included in $SK^*_n$. This shows that $G\urcorner$ belongs to $\mathcal{F}(SK_4)$, thus $G\urcorner$ is a two-dimensional COM. That $G$ is isometrically embedded in $G\urcorner$ follows from Lemma \ref{1extension} and the fact that if $G$ is an isometric subgraph of $G'$ and $G'$ is an isometric subgraph of $G''$, then $G$ is an isometric subgraph of $G''$.
\end{proof}%

\subsection{Completion to ample two-dimensional partial cubes}

Let $G\in {\mathcal F}(Q_3)$, $C$ a gated cycle of $G$, and $E_j$ a $\Theta$-class crossing $C$. Set $C:=(v_1,v_2,\ldots, v_{2k})$, where the edges $v_{2k}v_1$ and $v_kv_{k+1}$ are in $E_j$. The graph $G_{C,E_j}$ is defined by adding a path on vertices $v_{2k}=v'_1, \ldots, v'_{k}=v_{k+1}$ and edges $v_iv'_i$ for all $2\leq i\leq k-1$. Let  $C'=(v'_1,\ldots,v'_k,v_{k+2},\ldots, v_{2k-1})$. Then we recursively apply the same construction to the cycle $C'$ and we call the resulting graph
a \emph{cycle completion} of $G$ \emph{along a gated cycle $C$}; see Figure~\ref{fig:extendcycle} for an illustration. Proposition \ref{prop:extendcycle} establishes the basic properties of this construction, in particular it shows that the cycle completion
along a gated cycle is well defined.

\begin{figure}[htb]
\centering
\includegraphics[width=.80\textwidth]{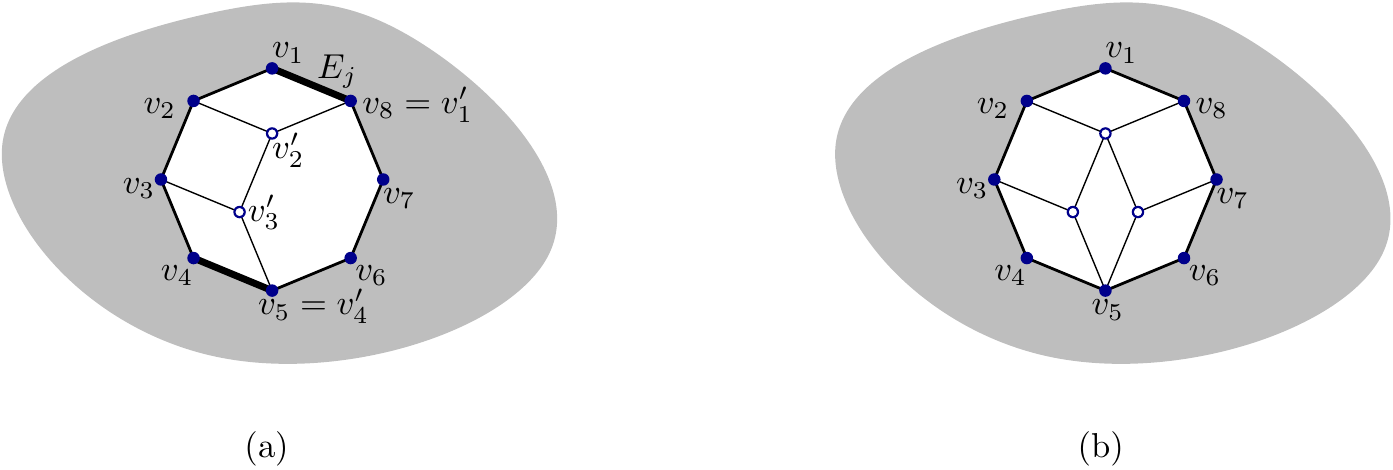}
\caption{(a) $G_{C,E_j}$ is obtained by adding the white vertices to a graph $G$ with a gated cycle $C=(v_1,v_2,\ldots, v_{8})$. (b) A cycle completion of $G$ along the cycle $C=(v_1,v_2,\ldots, v_{8})$.}
\label{fig:extendcycle}
\end{figure}

\begin{proposition}\label{prop:extendcycle}
 Let $G$ be a partial cube, $C$ a gated cycle of $G$, and $E_j$ a $\Theta$-class crossing $C$.
 \begin{enumerate}[(1)]
  \item \label{cond1:prop_extendcycle} $G_{C,E_j}$ is a partial cube and $G$ is an isometric subgraph of $G_{C,E_j}$,
  \item \label{cond2:prop_extendcycle} $C'=(v'_1,\ldots,v'_k,v_{k+2},\ldots, v_{2k-1})$ is a gated cycle,
  \item \label{cond3:prop_extendcycle} If $G\in {\mathcal F}(Q_3)$, then so is $G_{C,E_j}$,
  \item \label{cond4:prop_extendcycle} If $G$ contains no convex $SK_n$, then neither does $G_{C,E_j}$.
\end{enumerate}
\end{proposition}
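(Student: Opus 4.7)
The plan for parts (1) and (2) is to extend the isometric embedding $\ell\colon V(G)\to\{0,1\}^m$ by $\ell(v'_i):=\ell(v_i)\oplus e_j$, and to exhibit explicit gates in $C'$. The preliminary lemma---that no vertex of $G$ already carries the label $\ell(v_i)\oplus e_j$ for $2\le i\le k-1$---follows from the gatedness of $C$: a hypothetical $E_j$-neighbor $u$ of $v_i$ in $H^-_j$ would, by the hypercube gate formula applied to $C$, be gated at $v_i$, yielding the discrepancy $d_G(u,v_{2k})=i+1$ versus the Hamming distance $i-1$, contradicting the isometric embedding of $G$ into $Q_m$. With the labels consistent, a direct case analysis verifies that graph distances in $G_{C,E_j}$ equal Hamming distances (using the $E_j$-edges $v_iv'_i$ and the new path $v'_1v'_2\cdots v'_k$ to route shortest paths involving new vertices), so $G_{C,E_j}$ is a partial cube containing $G$ isometrically. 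For~(2) I would assign the gate of each vertex $u\in V(G_{C,E_j})$ in $C'$ as follows: $u\in V(C')$ is self-gated; $v_i$ with $2\le i\le k-1$ gets gate $v'_i$; $v_1,v_k$ get gates $v_{2k},v_{k+1}$; and any $u\in V(G)\setminus V(C)$ transports its $C$-gate $v_g$ via the same dictionary. The gate identities then follow from the Hamming-distance bookkeeping of~(1).

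For~(3), the plan is to view $G_{C,E_j}$ as an isometric expansion of $\pi_j(G)\in\mathcal{F}(Q_3)$ whose intersection set is $H^0=\pi_j(\partial H^+_j(G))\cup\{\pi_j(v_2),\dots,\pi_j(v_{k-1})\}$, and then to invoke Proposition~\ref{expansion-Qd+1} with $d=2$: it suffices to prove $H^0$ has VC-dimension at most~$1$. Assume for contradiction that some pair $\{a,b\}$ is shattered by $H^0$; the original hyperplane $\pi_j(\partial H^+_j(G))$ has VC-dimension at most~$1$ by Proposition~\ref{VCdim_pc}, so the culprit combination $(x,y)$ is realized only by a new vertex $\pi_j(v_i)$---meaning $v_i\in\{v_2,\dots,v_{k-1}\}$ has $(a,b)$-projection $(x,y)$ while no vertex of $V(G)\cap H^-_j$ realizes $(a,b)=(x,y)$. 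A subcase analysis on which of $E_a,E_b$ crosses $C$ delivers the contradiction: if at most one crosses $C$, the constancy of $V(C)$ on the non-crossing coordinate forces a vertex of $V(C)\cap H^-_j$ (e.g.\ the antipode $v_{i+k}$ or a neighbor of it on $C$) to realize $(x,y)$; if both cross $C$, then $V(C)\cap H^+_j$ realizes only three of the four $(a,b)$-projections, and the missing projection $\mu$---required in $V(G)\cap H^+_j$ to complete the shattering there---would need an off-cycle witness $u$, whose candidate gate in $C$ computed via the hypercube gate formula has $(a,b,j)$-projection $(\mu,+)$, impossible since $\mu$ is absent from the top half of $C$.

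For~(4), assume $G_{C,E_j}$ contains a convex $H=SK_n$ for some $n\ge 4$. If $V(H)\subseteq V(G)$, the isometric inclusion of $G$ in $G_{C,E_j}$ lifts $H$ to a convex $SK_n$ in $G$, contradicting the hypothesis. Otherwise some new $v'_i\in V(H)$; since $\deg_{G_{C,E_j}}(v'_i)=3$, the vertex $v'_i$ can serve as an original vertex of $H$ only when $n=4$, and combined with the requirement that any two original vertices of $SK_n$ share a \emph{unique} subdivision neighbor, the doubled common-neighbor pairs such as $\{v_{i-1},v'_i\}\subset N(v_i)\cap N(v'_{i-1})$ rule out every candidate configuration, forcing a contradiction. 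The hard part is part~(3): the essential use of the gatedness of $C$ through the hypercube gate formula is what distinguishes this argument from a naive VC-dimension comparison of hyperplanes, which would fail without the gatedness hypothesis.
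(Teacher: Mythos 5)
Your proofs of (1) and (2) follow essentially the same route as the paper's (extend the $\Theta$-classes/labels, use the gate in $C$ to route shortest paths from old vertices to new ones, and read off the gates in $C'$ from the $E_j$-edges); your extra ``preliminary lemma'' that no vertex of $G$ already carries the label $\ell(v_i)\oplus e_j$ is a useful point that the paper glosses over, and your gate-formula argument for it is correct. Where you genuinely diverge is (3): the paper contracts everything except $E_j$ and two classes of $C$, observes that the augmented cycle supplies a $Q_3^-$, and derives from the eighth vertex a vertex of $G$ all of whose shortest paths to $C$ cross three classes, contradicting gatedness. You instead observe $\pi_j(G_{C,E_j})=\pi_j(G)$, view $G_{C,E_j}$ as an isometric expansion of $\pi_j(G)$, and invoke Proposition~\ref{expansion-Qd+1}, reducing everything to showing that the hyperplane of $E_j$ augmented by $\pi_j(v_2),\dots,\pi_j(v_{k-1})$ still has VC-dimension $\le 1$. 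This is a valid and arguably more systematic reduction (it reuses the machinery of Section~3 rather than an ad hoc minor analysis), and your Case~B correctly isolates where gatedness enters: the gate of an off-cycle boundary vertex must agree with it on every class crossing $C$, which pins the missing mixed $(a,b)$-pattern onto the arc $v_1\cdots v_k$, where it cannot occur. Both proofs ultimately use gatedness to forbid the same external witness; yours buys uniformity with Section~3 at the price of a longer bookkeeping argument.

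Two points need repair. In Case~A of (3) your named witness is wrong: the antipode $v_{i+k}$ and its cycle-neighbors are \emph{interior} vertices of the opposite arc and hence (by your own preliminary lemma, applied symmetrically) are not incident to $E_j$ in $G$, so they do not lie in the hyperplane $H_j$; moreover $v_{i+k}$ realizes $(1-x,\beta)$, not $(x,\beta)$, when $E_a$ crosses $C$. The correct witnesses are $v_1$ and $v_k$ (equivalently $v_{2k},v_{k+1}$): since $E_a$ crosses the arc $v_1\cdots v_k$ at most once, one of $v_1,v_k$ carries the $a$-value $x$, and both carry $b$-value $\beta$ and lie in $\partial G^{\pm}_j$. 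With that substitution Case~A closes. For (4), your approach (degree bounds plus the fact that each new vertex sits on the squares $(v_{i-1},v_i,v'_i,v'_{i-1})$ and $(v_i,v_{i+1},v'_{i+1},v'_i)$, which convexity would force into the square-free graph $SK_n$) does work, but as written it is only a gesture: you must actually run the case analysis on whether $v'_i$ is an original or a subdivision vertex of $H$ and on which two of its three neighbors lie in $H$, chasing each branch to a pair of $H$-vertices at distance two with two common neighbors. The paper's argument for (4) is shorter: it notes that a convex $SK_n$ is a restriction, that the only restrictions producing subgraphs not already convex in $G$ are those keeping the new side of $E_j$, and that the new vertices then form a path of degree-two vertices, which $SK_n$ cannot contain.
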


\begin{proof}


To prove \eqref{cond1:prop_extendcycle}, notice that the $\Theta$-classes of $G$ extend to $G_{C,E_j}$ is a natural way, i.e., edges of the form $v_iv'_i$ for all $2\leq i\leq k-1$ belong to $E_j$, while an edge $v'_iv'_{i+1}$ belongs to the $\Theta$-class of the edge $v_iv_{i+1}$ for all $1\leq i\leq k-1$. Clearly, among the old vertices distances have not changed and the new vertices are embedded as an isometric path. If $w\in C$ and $u\in C'$ is a new vertex, then it is easy to see that there is a shortest path using each $\Theta$-class at most once. In fact, since $w$ is at distance at most one from $C'$ it has a gate in $C'$, i.e., the path only uses $E_j$. Finally, let $v$ be an old vertex of $G \setminus C$, $w$ be its gate in $C$, and $u$ be a new vertex, i.e., $u\in G_{C, E_j} \setminus G$. Let $P$ be a path from $v$ to $u$ that is a concatenation of a shortest $(v,w)$-path $P_1$ and a shortest $(w,u)$-path $P_2$.
Since $C$ is gated and all $\Theta$-classes crossing $P_2$ also cross $C$, the $\Theta$-classes of $G$ crossing $P_1$ and the $\Theta$-classes  crossing $P_2$ are distinct. Since $P_1$ and $P_2$ are shortest paths, the $\Theta$-classes in each of two groups are also pairwise different. Consequently, $P$ is a shortest $(v,u)$-path and thus $G_{C,E_j}$ is a partial cube. Finally, $G$ is an isometric subgraph of $G_{C,E_j}$ by construction.

To prove \eqref{cond2:prop_extendcycle}, let $v \in G \setminus C'$. If $v \in G \setminus C$, let $w$ be its gate in $C$. Thus there is a shortest $(v,w)$-path which does not cross the $\Theta$-classes crossing $C$. Suppose that $w \notin C'$, otherwise we are done. Then there exists a vertex $w'$ such that the edge $ww'$ belongs to $E_j$. Since $E_j$ crosses $C$ and not $C'$, $w'$ is the gate of $v$ in $C'$. If $v \in C \setminus C'$, using the previous argument, there exists an edge $vv'$ belonging to $E_j$ and we conclude that $v'$ is the gate of $v$ in $C'$.

To prove \eqref{cond3:prop_extendcycle}, suppose by way of contradiction that $G_{C,E_j}$ has a $Q_3$ as a pc-minor. Then there exists a sequence $s$ of restrictions $\rho_s$ and contractions $\pi_s$ such that $s(G) = Q_3$.
Recall that restrictions and contractions commute in partial cube \cite{ChKnMa}. Hence, we get a graph $G'=\pi_s(G)$ which contains a convex $Q_3$. Thus, this pc-minor $Q_3$ can be obtained by contractions.
Clearly, $E_j$ must be among the uncontracted classes, because $\pi_j(G_{C,E_j})=\pi_j(G)$. Furthermore, if only one other $\Theta$-class of $C$ is not contracted in $G_{C,E_j}$, then contraction will identify all new vertices with (contraction) images of old vertices and again by the assumption
$G\in {\mathcal F}(Q_3)$ we get a contradiction. Thus, the three classes that constitute the copy of $Q_3$ are $E_j$ and two other classes say $E_j', E_j''$ of $C$. Thus, the augmented $C$ yields a $Q_3^-$ in the contraction of $G_{C,E_j}$,
but the last vertex of the $Q_3$ comes from a part of $G$. In other words, there is a vertex $v\in G$, such that all shortest paths from $v$ to $C$ cross $E_j$, $E_j'$, or $E_j''$. This contradicts that $C$ was gated, establishing that
$G_{C,E_j}\in {\mathcal F}(Q_3)$.

To prove \eqref{cond4:prop_extendcycle}, suppose by way of contradiction that $G_{C,E_j}$ contains a convex $SK_n$. Since $SK_n$ has no 4-cycles nor vertices of degree one, the restrictions leading to $SK_n$ must either include $E_j$ or the class of the edge $v_1v_2$ or $v_{2k-1}v_{2k}$. The only way to restrict here in order to obtain a graph that is not a convex subgraph of $G$ is restricting to the side of $E_j$, that contains the new vertices. But the obtained graph cannot use new vertices in a convex copy of $SK_n$ because they form a path of vertices of degree two, which does not exist in a $SK_n$. Thus $G_{C,E_j}$ does not contain a convex $SK_n$.
\end{proof}


Propositions \ref{prop:extendtoCOM} and \ref{prop:extendcycle} allow us to prove Theorem \ref{thm:extendtoample}. Namely, applying Proposition \ref{prop:extendtoCOM} to a graph $G \in \mathcal{F}(Q_3)$,
we obtain a two-dimensional COM $G\urcorner$, i.e. a graph $G\urcorner\in\mathcal{F}(Q_3,SK_4)$.
Then, we recursively apply the cycle completion along gated cycles to the graph $G\urcorner$ and to the graphs resulting from $G\urcorner$. By Proposition~\ref{prop:extendcycle} \eqref{cond3:prop_extendcycle}, \eqref{cond4:prop_extendcycle}, all intermediate graphs belong to $\mathcal{F}(Q_3,SK_4)$, i.e. they are two-dimensional COMs. This explain why we can recursively apply the cycle completion construction cycle-by-cycle. Since this construction does not increase the VC-dimension, by Sauer-Shelah lemma after a finite number of steps, we will get a graph $(G\urcorner)\ulcorner\in\mathcal{F}(Q_3,SK_4)$ in which all convex cycles must be gated (by Propositions \ref{gatedhullC6} and \ref{gatedcycle}) and must have length $4$. This implies that $(G\urcorner)\ulcorner\in\mathcal{F}(C_6)$. Consequently, $(G\urcorner)\ulcorner\in\mathcal{F}(Q_3,C_6)$ and by Proposition \ref{prop:excludedminors} the final graph $G^{\top}=(G\urcorner)\ulcorner$ is a two-dimensional ample partial cube. This completes the proof of Theorem \ref{thm:extendtoample}. For an illustration, see Figure \ref{fig:completionM}.

\begin{figure}[htb]
\centering
\includegraphics[width=.40\textwidth]{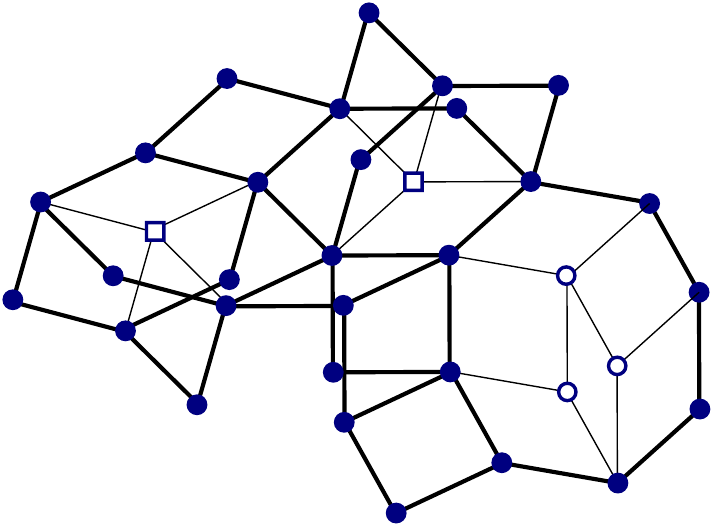}
\caption{An ample completion $M^{\top}$ of the running example $M$.}
\label{fig:completionM}
\end{figure}

\begin{remark}
 One can generalize the construction in Proposition~\ref{prop:extendcycle} by replacing a gated cycle $C$ by a gated $\AOM$ that
 is the convex hull of $C$, such that all its convex cycles are gated. In a sense, this construction captures the set of all possible extensions of the graph $G$.
\end{remark}

\section{Cells and carriers} This section uses concepts and techniques developed for COMs \cite{BaChKn} and  for hypercellular graphs \cite{ChKnMa}.
Let ${\mathcal C}(G)$ denote the set of all convex cycles of a partial cube $G$ and let ${\mathbf C}(G)$ be the 2-dimensional cell complex whose 2-cells are obtained by replacing
each convex cycle $C$ of length $2j$ of $G$ by a regular Euclidean polygon $[C]$ with $2j$ sides. It was shown in \cite{KlSh} that the set ${\mathcal C}(G)$ of convex cycles of
any partial cube $G$ constitute a basis of cycles. This result was extended in \cite[Lemma 13]{ChKnMa} where it has been shown that the 2-dimensional
cell complex ${\mathbf C}(G)$ of any partial cube $G$ is simply connected.  Recall that a cell complex $\bf X$ is {\it simply connected} if it is connected and if every continuous
map of the 1-dimensional sphere $S^1$ into $\bf X$ can be extended to a continuous mapping of the (topological) disk $D^2$ with boundary $S^1$ into $\bf X$.

Let $G$ be a partial cube. For a $\Theta$-class $E_i$ of $G$, we denote by $N(E_i)$ the \emph{carrier} of $E_i$ in ${\mathbf C}(G)$, i.e., the subgraph of $G$
induced by the union of all cells of ${\mathbf C}(G)$ crossed by $E_i$. The carrier $N(E_i)$ of $G$ splits into its positive and negative parts
$N^+(E_i):=N(E_i)\cap G^+_i$ and $N^-(E_i):=N(E_i)\cap G^-_i$, which we call {\it half-carriers}. Finally, call $G^+_i\cup N^-(E_i)$ and $G^-_i\cup N^+(E_i)$
the {\it extended halfspaces} of $E_i$.  By Djokovi\'c's Theorem \ref{Djokovic}, halfspaces of partial cubes $G$ are convex subgraphs and therefore are
isometrically embedded in $G$. However, this is no longer true for carriers, half-carriers, and extended halfspaces of all partial cubes.  However this is the case for
two-dimensional partial cubes:

\begin{proposition} \label{carriers} If $G\in {\mathcal F}(Q_3)$ and $E_i$ is a $\Theta$-class of $G$, then the carrier $N(E_i)$, its halves $N^+(E_i), N^-(E_i)$, and the extended halfspaces
$G^+_i\cup N^-(E_i), G^-_i\cup N^+(E_i)$ are isometric subgraphs of $G$, and thus belong to ${\mathcal F}(Q_3)$.
\end{proposition}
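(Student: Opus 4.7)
The plan is to first establish that each half-carrier $N^+(E_i)$ and $N^-(E_i)$ is an isometric subgraph of $G$, and then derive the assertions for the carrier $N(E_i)$ and the extended halfspaces by routing arguments across $E_i$. The fact that all these subgraphs, once shown to be isometric, belong to $\mathcal{F}(Q_3)$ is automatic, since being an isometric subgraph preserves the absence of pc-minors (any $Q_3$ pc-minor of such a subgraph would be a pc-minor of $G$). The crucial structural input is Theorem \ref{FS+AOM}, which pins down the gated hulls of isometric cycles as cycles themselves, $Q_3^-$, maximal full subdivisions, or gated disks.

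For the half-carrier, take $u,v\in N^+(E_i)$. Since $G^+_i$ is convex by Theorem \ref{Djokovic}, every shortest $(u,v)$-path of $G$ already lies in $G^+_i$; the task is to exhibit one lying in $N^+(E_i)$. I would induct on $d_G(u,v)$, the base cases being trivial. For the inductive step it suffices to find a neighbor $u'$ of $u$ with $u'\in I(u,v)\cap N^+(E_i)$ and then invoke induction on the pair $(u',v)$. Pick any shortest $(u,v)$-path $P$ and let $u''$ be its neighbor of $u$; if $u''\in N^+(E_i)$ we are done. Otherwise, since $u\in N^+(E_i)$, $u$ lies on a convex cycle $C$ crossing $E_i$, and $C$ supplies a neighbor $u'$ of $u$ in $G^+_i\cap N^+(E_i)$. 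The delicate point is to reroute $P$ through $u'$. One applies the quadrilateral condition in the partial cube $G$ iteratively, using at each step either a convex 4-cycle of $G$ or the gated hull of a cell guaranteed by Theorem \ref{FS+AOM} (a gated disk or full subdivision), to swap the first edge of the path into $N^+(E_i)$.

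Given Step 1, the remaining assertions follow by standard crossing arguments. For the carrier $N(E_i)$: given $u\in N^+(E_i)$ and $v\in N^-(E_i)$, any shortest $(u,v)$-path crosses $E_i$ exactly once at an edge $w^+w^-$ with $w^{\pm}\in\partial G^{\pm}_i\subseteq N^{\pm}(E_i)$; apply Step 1 to the two sub-paths and concatenate across $w^+w^-$. For $G^+_i\cup N^-(E_i)$: given $u\in G^+_i$ and $v\in N^-(E_i)$, write any shortest $(u,v)$-path as a prefix in the convex halfspace $G^+_i$ (which is already inside the extended halfspace) followed by an $E_i$-edge $w^+w^-$ and a suffix from $w^-\in N^-(E_i)$ to $v$ in $G^-_i$, and reroute the suffix into $N^-(E_i)$ using Step 1. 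The other extended halfspace is handled symmetrically.

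The main obstacle is Step 1, i.e., the rerouting lemma inside the half-carrier: the first edge of an arbitrary shortest path from $u$ need not lie in any cell crossing $E_i$, so one must systematically shift it into the carrier using the local cell structure. This is where the classification of gated hulls of 6-cycles (Proposition \ref{gatedhullC6}), the gatedness of maximal full subdivisions (Proposition \ref{gatedSKn}), and the gatedness of convex disks with long boundaries (Proposition \ref{gatedcycle}) are all needed: together they supply enough rigidity to ensure that any shortest path can be homotoped through cells of ${\mathbf C}(G)$ into the half-carrier, without leaving $G^+_i$.
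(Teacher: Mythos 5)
Your overall architecture coincides with the paper's: both proofs first reduce everything to the isometricity of the half-carriers $N^{\pm}(E_i)$ (your routing arguments for the carrier and the extended halfspaces are essentially identical to the paper's Claim on half-carriers, splitting a shortest path at the unique $E_i$-edge $w^+w^-$ with $w^{\pm}\in\partial G^{\pm}_i$), and the ``thus belong to ${\mathcal F}(Q_3)$'' conclusion is indeed automatic. The problem is that the one step you yourself identify as the main obstacle --- isometricity of a single half-carrier --- is not actually proved in your proposal, and the mechanism you sketch for it does not work as stated. You invoke ``the quadrilateral condition in the partial cube $G$'': general partial cubes do not satisfy the quadrilateral condition (that is a median-type property), so there is no a priori way to ``swap the first edge of the path into $N^+(E_i)$'' one square at a time. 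Likewise, the neighbor $u'$ of $u$ supplied by a convex cycle $C$ crossing $E_i$ need not lie in $I(u,v)$, so the induction on $d_G(u,v)$ has no anchor: you never establish that $I(u,v)\cap N^+(E_i)$ contains a neighbor of $u$. The appeal to Propositions \ref{gatedhullC6}, \ref{gatedSKn} and \ref{gatedcycle} as supplying ``enough rigidity to homotope'' the path is a statement of hope rather than an argument; note also that the carrier is defined via \emph{convex cycles}, whereas Theorem \ref{FS+AOM} classifies \emph{gated hulls of isometric cycles}, which for full subdivisions are not single cells of ${\mathbf C}(G)$, so the two notions do not align as cleanly as your sketch assumes.

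The paper avoids this combinatorial surgery entirely. It first passes to the canonical COM completion $G\urcorner$ of $G$ (Proposition \ref{prop:extendtoCOM}), observes that $N^{\pm}(E_i)$ is contained in the corresponding half-carrier $N\urcorner^{\pm}(E_i)$ of $G\urcorner$, cites the fact that carriers and half-carriers of COMs are themselves COMs and hence isometric subgraphs (\cite[Proposition 6]{BaChKn}), and then uses the specific form of the $1$-extensions (each added vertex $v_{\varnothing}$ is adjacent only to the original vertices of a gated $SK_n$, which pairwise have distance $2$ in $G$) to replace a shortest path in $N\urcorner^+(E_i)$ by one of the same length inside $N^+(E_i)$. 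If you want to keep your more elementary, purely local approach, you would need to actually prove the rerouting lemma, i.e., that for $u,v\in N^+(E_i)$ the interval $I(u,v)$ meets $N^+(E_i)$ in a neighbor of $u$; as it stands this is precisely the content of the proposition and has not been established.
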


\begin{proof} Since the class ${\mathcal F}(Q_3)$ is closed under taking isometric subgraphs, it suffices to show that each of the mentioned subgraphs is an isometric subgraph of $G$.
The following claim reduces  the isometricity of carriers and extended halfspaces to isometricity of half-carriers:

\begin{claim} \label{half-carrier} Carriers and extended halfspaces of a partial cube $G$ are isometric subgraphs of $G$ if and only if half-carriers are isometric subgraphs of $G$.
\end{claim}

\begin{proof} One direction is implied by the equality $N^+(E_i):=N(E_i)\cap G^+_i$ and the general fact that the intersection of a convex subgraph and an isometric subgraph of
$G$ is an isometric subgraph of $G$. Conversely, suppose that $N^+(E_i)$ and $N^-(E_i)$ are isometric subgraphs of $G$ and we want to prove that the carrier $N(E_i)$ is isometric
(the proof for  $G^+_i\cup N^-(E_i)$ and $G^-_i\cup N^+(E_i)$ is similar). Pick any two vertices $u,v\in N(E_i)$. If $u$ and $v$ belong to the same half-carrier, say $N^+(E_i)$, then they
are connected in $N^+(E_i)$ by a shortest path and we are done. Now, let $u\in N^+(E_i)$ and $v\in N^-(E_i)$. Let $P$ be any shortest $(u,v)$-path of $G$. Then necessarily $P$ contains an edge
$u',v'$ with $u'\in \partial G^+_i\subseteq N^+(E_i)$ and $v'\in \partial G^-_i\subseteq N^-(E_i)$. Then $u,u'$ can be connected in $N^+(E_i)$ by a shortest path $P'$ and $v,v'$ can be connected
in $N^-(E_i)$ by a shortest path $P''$. The path $P'$, followed by the edge $u'v'$, and by the path $P''$ is a shortest $(u,v)$-path included in $N(E_i)$.
\end{proof}

By Claim \ref{half-carrier} it suffices to show that the half-carriers $N^+(E_i)$ and $N^-(E_i)$ of a two-dimensional partial cube $G$ are isometric subgraphs of $G$.
By Proposition \ref{prop:extendtoCOM}, $G$ is an isometric subgraph of its canonical COM-extension $G\urcorner$.  Moreover from the construction of $G\urcorner$ it follows
that the carrier $N(E_i)$ and its half-carriers $N^+(E_i)$ and $N^-(E_i)$ are subgraphs of the carrier $N\urcorner(E_i)$ and its half-carriers $N\urcorner^+(E_i), N\urcorner^-(E_i)$ in the graph $G\urcorner$.
By \cite[Proposition 6]{BaChKn}, carriers and their halves of COMs are also COMs. Consequently, $N\urcorner^+(E_i)$ and $N\urcorner^-(E_i)$ are isometric subgraphs of $G\urcorner$.
Since the graph $G\urcorner$ is obtained from $G$ via a sequence of 1-extensions, it easily follows that any shortest
path $P\subset N\urcorner^+(E_i)$  between two vertices of $N^+(E_i)$  can be replaced by a path $P'$ of the same length lying entirely in $N^+(E_i)$. Therefore $N^+(E_i)$ is an isometric subgraph
of the partial cube $N\urcorner^+(E_i)$, thus the half-carrier $N^+(E_i)$ is also an isometric subgraph of $G$.
\end{proof}

A partial cube $G=(V,E)$ is a {\it 2d-amalgam} of
two-dimensional partial cubes $G_1=(V_1,E_1), G_2=(V_2,E_2)$ both isometrically embedded in the cube $Q_m$ if the following conditions are satisfied:
\begin{itemize}
\item[(1)] $V=V_1\cup V_2, E=E_1\cup E_2$ and $V_2\setminus V_1,V_1\setminus V_2,V_1\cap V_2\ne \varnothing;$
\item[(2)] the subgraph $G_{12}$ of $Q_m$ induced by $V_1\cap V_2$ is a two-dimensional partial cube and each maximal full subdivision $SK_n$ of $G_{12}$ is maximal in $G$;
\item[(3)] $G$ is a partial cube.
\end{itemize}

As a last ingredient for the next proposition we need a general statement about COMs.
\begin{lemma}\label{lem:rankofcell}
 If $G$ is a COM and the cube $Q_d$ is a pc-minor of $G$, then there is an antipodal subgraph $H$ of $G$ that has $Q_d$ as a pc-minor.
\end{lemma}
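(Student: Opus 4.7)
The plan is to locate a face of the COM $\covectors$ underlying $G$ whose tope graph $H$ is an antipodal (hence gated) subgraph of $G$ and itself contains $Q_d$ as pc-minor. By Lemma~\ref{VCdim_d}, the hypothesis ``$Q_d$ is a pc-minor of $G$'' is equivalent to $\vcd(G)\ge d$, so there exist a set $Y=\{i_1,\ldots,i_d\}\subseteq U$ shattered by $G$ and $2^d$ topes $T^{(\epsilon)}\in\mathcal{T}$, $\epsilon\in\{\pm\}^Y$, with $T^{(\epsilon)}|_Y=\epsilon$. I would then aim to produce a covector $X\in\covectors$ vanishing on all of $Y$, since its face $F(X)$ will turn out to be the desired antipodal subgraph.

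To produce such $X$, I would contract all $\Theta$-classes labelled by $U\setminus Y$. Since COMs are closed under contractions and the covector system of the contracted COM is $\{X|_Y:X\in\covectors\}$, the graph $\pi_{U\setminus Y}(G)$ is the tope graph of a COM on $Y$. The $2^d$ shattering topes show that $\pi_{U\setminus Y}(G)$ realizes all sign patterns on $Y$, so as a partial cube on $2^d$ vertices isometrically embedded in $Q_d$ it coincides with $Q_d$. Now, COMs are uniquely determined by their tope graphs (Subsection~\ref{OM-COM-AMP}), and the Boolean oriented matroid with covector set $\{\pm,0\}^Y$ is a COM whose tope graph is $Q_d$; by uniqueness, the contracted COM equals this Boolean OM, and in particular the zero covector $\mathbf{0}$ belongs to it. Lifting, there exists $X\in\covectors$ with $X|_Y=\mathbf{0}$.

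Given $X$ with $X|_Y=\mathbf{0}$, I would finish as follows. By \cite[Lemma 4]{BaChKn}, the face $F(X)=\{X\circ Y':Y'\in\covectors\}$ is an OM, so its tope graph $H$ is antipodal; by the COM properties recalled in Subsection~\ref{OM-COM-AMP}, $H$ is a gated subgraph of $G$. For each $\epsilon\in\{\pm\}^Y$, the sign vector $X\circ T^{(\epsilon)}$ is a tope of $F(X)$ with $(X\circ T^{(\epsilon)})|_Y=T^{(\epsilon)}|_Y=\epsilon$, because $X|_Y=\mathbf{0}$. Therefore $H$ also shatters $Y$, has $\vcd(H)\ge d$, and by Lemma~\ref{VCdim_d} contains $Q_d$ as pc-minor, yielding the desired antipodal subgraph.

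The main obstacle is the production of the covector $X$ vanishing on $Y$. The argument above sidesteps direct covector manipulations by reducing to the uniqueness of the COM realizing $Q_d$; the alternative would be to apply the strong elimination axiom (SE) iteratively to the shattering topes, killing one coordinate of $Y$ per step while carefully tracking which remaining coordinates of the intermediate covectors stay consistent, which is feasible but more technical.
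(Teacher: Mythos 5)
Your proof is correct, but it takes a genuinely different route from the paper's. The paper argues entirely at the level of tope graphs: since a cube pc-minor can be realized by contractions alone, $G$ is obtained from $Q_d$ by a sequence of isometric expansions, and \cite[Lemma 6.2]{KnMa} guarantees that under each expansion an antipodal subgraph of a COM either stays antipodal or becomes peripheral (and then survives as a convex subgraph of a larger antipodal subgraph); induction along the expansion sequence, starting from the antipodal graph $Q_d$ itself, gives the claim. You instead work at the level of sign vectors: you extract a shattered $d$-set $Y$, note that contracting the $\Theta$-classes labelled by $U\setminus Y$ (i.e., deleting $U\setminus Y$ from the sign-vector system) yields a COM with tope graph $Q_d$, conclude that this COM contains $\mathbf{0}$, lift to a covector $X$ with $X|_Y=\mathbf{0}$, and take the face $F(X)$, an OM whose antipodal, gated tope graph still shatters $Y$. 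Both arguments are sound and rely only on facts already quoted in the paper's preliminaries (closure of COMs under pc-minors, recoverability of a COM from its topes, faces of COMs being OMs and gated). Your version is more constructive---it exhibits the antipodal subgraph explicitly as a face, shows it can be taken gated, and localizes the $Q_d$-minor on the same shattered set $Y$---at the price of invoking tope-determinacy of COMs; a small simplification is available there: since $Q_d$ is antipodal and antipodal COMs are exactly the OMs, the deleted COM is an OM and contains $\mathbf{0}$ by definition, with no appeal to uniqueness. The paper's version avoids covectors entirely but leans on the external expansion lemma from \cite{KnMa}.
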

\begin{proof}
By \cite[Lemma 6.2.]{KnMa}, if $H$ is an antipodal subgraph of a COM $G$ and $G'$ is an expansion of $G$, then the expansion $H'$ of $H$ in $G'$ is either antipodal as well or is peripheral, where the latter implies that $H'$ contains $H$ as a convex subgraph.
 In either case $G'$ contains an antipodal subgraph, that has $H$ as minor. Since $Q_d$ is antipodal, considering a sequence of expansions from $Q_d=G_0, \ldots G_k=G$ every graph at an intermediate step contains an antipodal subgraph having $Q_d$ as a minor.
\end{proof}

\begin{proposition} \label{amalgam} Two-dimensional partial cubes are obtained via successive 2d-amalgamations from their
gated cycles and gated full subdivisions. Conversely, the 2d-amalgam of two-dimensional partial cubes $G_1=(V_1,E_1)$ and $G_2=(V_2,E_2)$ of $Q_m$ is a two-dimensional partial cube of $Q_m$
in which every gated cycle or gated full subdivision belongs to at least one of the two constituents.
\end{proposition}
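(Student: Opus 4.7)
The plan is to prove both implications via the carrier decomposition from Proposition~\ref{carriers} and the isometric cover machinery from Subsection~\ref{subsec:isometric}, with the forward direction by induction on $|V(G)|$. For the decomposition, if $G$ is already a single gated cycle or gated full subdivision, there is nothing to prove; otherwise, Theorem~\ref{FS+AOM} guarantees that every maximal cell of $G$ is a gated cycle or a gated full subdivision, and since $G$ contains at least two such maximal cells, one can pick a $\Theta$-class $E_i$ crossing one maximal cell but not another, so $N(E_i)$ is a proper subgraph of $G$. After possibly switching the two halfspaces of $E_i$, both $V(G_i^+) \setminus V(N^+(E_i))$ and $V(G_i^-) \setminus V(N^-(E_i))$ are nonempty. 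Setting $G_1 := G_i^+ \cup N^-(E_i)$, $G_2 := G_i^- \cup N^+(E_i)$, and $G_{12} := N(E_i)$, Proposition~\ref{carriers} gives that all three are two-dimensional partial cubes isometrically embedded in $Q_m$. Conditions~(1) and~(3) of the 2d-amalgam definition are immediate; condition~(2) holds because any maximal full subdivision in $N(E_i)$ is a maximal cell of $G$ contained in the carrier, hence already gated and maximal in $G$ by Proposition~\ref{gatedSKn}. Induction applied to the strictly smaller $G_1, G_2$ then yields the desired decomposition down to cells.

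For the converse, let $G$ be a 2d-amalgam of two-dimensional partial cubes $G_1, G_2$ of $Q_m$. Since $G, G_1, G_2$ are all isometric subgraphs of $Q_m$, the triple $(G_1, G_{12}, G_2)$ is an isometric cover of $G$ in the sense of Subsection~\ref{subsec:isometric}. To show $G \in \mathcal{F}(Q_3)$, assume for contradiction that $G$ shatters some three-element set $X$. Neither $G_1$ nor $G_2$ can shatter $X$ since both are two-dimensional, so by Lemma~\ref{isometric cover} the three-set $X$ is distributed nontrivially across $V_1 \setminus V_2$, $V_1 \cap V_2$, and $V_2 \setminus V_1$. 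Contracting all $\Theta$-classes of $G$ outside $X$ via Lemma~\ref{contraction-ChKnMa} reduces the amalgam to one inside $Q_3$, with $G_{12}$ becoming a two-dimensional isometric subgraph of $Q_3$; condition~(2) then forces any maximal (gated) full subdivision of the contracted $G_{12}$, in particular any isometric $6$-cycle whose gated hull inside $G_{12}$ is the cycle itself, to remain maximal in the contracted $Q_3$, which is impossible since no convex cycle of $Q_3$ is gated. This yields the contradiction.

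Finally, for the assertion about gated cycles and gated full subdivisions, let $H$ be such a subgraph of $G$. Since $H$ is convex, if $H$ met both $V_1 \setminus V_2$ and $V_2 \setminus V_1$, convexity would force every shortest path inside $H$ between those parts to cross $V(G_{12})$, so $H \cap G_{12}$ would inherit enough of $H$'s cell structure that, combined with condition~(2) and Propositions~\ref{gatedhullC6}, \ref{gatedcycle}, and \ref{gatedSKn}, $H$ itself would be entirely contained in $G_{12}$ and hence lie in both constituents. The main obstacle will be the converse direction, in particular the reduction to the case $G = Q_3$ and the case analysis therein: correctly invoking condition~(2) to rule out every ``straddling'' configuration requires carefully identifying which maximal full subdivision of the contracted $G_{12}$ witnesses the failure of gatedness in the ambient $Q_3$, and handling the subcase where both halfspaces contribute only boundary vertices to the carrier.
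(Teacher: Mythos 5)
Your forward direction follows the paper's carrier-splitting idea, but the split you chose does not quite work as stated. You take the symmetric decomposition $G_1:=G_i^+\cup N^-(E_i)$, $G_2:=G_i^-\cup N^+(E_i)$ with $G_{12}=N(E_i)$, which requires \emph{both} $V(G_i^+)\setminus V(N^+(E_i))$ and $V(G_i^-)\setminus V(N^-(E_i))$ to be nonempty; ``switching the two halfspaces'' cannot arrange this, since switching is symmetric, and knowing only that $N(E_i)$ is a proper subgraph gives one nonempty difference, not two (one side of $E_i$ may consist entirely of cells crossed by $E_i$). You also assert without proof that some $\Theta$-class crosses one maximal cell but not another and that this makes $N(E_i)$ proper --- even if $E_i$ does not cross a cell $C_2$, the vertices of $C_2$ could all lie on other cells that $E_i$ does cross. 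The paper handles both issues at once: it first reduces to the $2$-connected case, then uses the Klav\v{z}ar--Shpectorov theorem (convex cycles form a cycle basis) to produce two gated cells $C_1,C_2$ sharing an edge $e$, picks $E_i$ crossing $C_1$ but not containing $e$, observes that the gated cell $C_2$ lies in one halfspace, say $G_i^+$, and is not contained in the carrier, and then uses the \emph{asymmetric} split $G_1:=G_i^-\cup N^+(E_i)$, $G_2:=G_i^+$, $G_{12}:=N^+(E_i)$, for which both difference sets are automatically nonempty.

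The converse is where your proposal genuinely fails, as you yourself anticipate. Lemma~\ref{isometric cover} is applied with its hypothesis negated: it concludes that $G^0$ shatters $Y$ when \emph{both} $G^1$ and $G^2$ shatter $Y$, and gives no information when neither does, so it does not ``distribute $X$ across the three parts.'' Contracting all $\Theta$-classes outside $X$ does produce a $Q_3$, but the images of $G_1$, $G_2$, $G_{12}$ need not form a 2d-amalgam, condition~(2) does not transfer to the contracted graphs, and the claim that ``no convex cycle of $Q_3$ is gated'' is false (every $4$-cycle of $Q_3$ is gated, as $Q_3$ is a median graph), so the intended contradiction does not materialize. The paper's route is entirely different and avoids this case analysis: it passes to the canonical COM completions $G_1\urcorner$ and $G_2\urcorner$ (rank-$2$ COMs by Lemma~\ref{1extension} and Proposition~\ref{prop:extendtoCOM}), uses condition~(2) to identify $G_1\urcorner\cap G_2\urcorner$ with $G_{12}\urcorner$, invokes the COM amalgamation theorem of Bandelt--Chepoi--Knauer to obtain a COM $G'$ containing $G$ isometrically and creating no new antipodal subgraphs, and then applies Lemma~\ref{lem:rankofcell} (a $Q_3$ pc-minor of a COM forces an antipodal subgraph with a $Q_3$ pc-minor) to conclude $G'\in\mathcal{F}(Q_3)$, hence $G\in\mathcal{F}(Q_3)$. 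Without some substitute for this antipodal-subgraph control, your direct approach has no mechanism to rule out the amalgam acquiring a $Q_3$ minor that straddles the two constituents.
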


\begin{proof} Let $G=(V,E)$ be a two-dimensional partial cube  which is not a single cell. We can suppose that $G$ is 2-connected,
otherwise we can do an amalgam along an articulation vertex. We assert that $G$ contains two gated cells intersecting in an edge. Since the intersection of two gated sets is gated and any cell does not contain any proper
gated subgraph except vertices and edges, the intersection of any two cells of $G$ is either empty, a vertex, or an edge. If the last case never occur, since any convex cycle of $G$ is included in a single cell,
any cycle of $G$ containing edges of several cells (such a cycle exists because $G$ is 2-connected) cannot be written as a modulo 2 sum of convex cycles. This contradicts the result of \cite{KlSh} that the set
of convex cycles of any partial cube $G$ constitute a basis of cycles.  Pick two gated cells $C_1$ and $C_2$ intersecting in an edge $e$. Let $E_i$ be a $\Theta$-class crossing $C_1$ and not containing $e$. Since $C_2$ is gated, $C_2$ is contained in
one of the halfspaces $G^+_i$ or $G^-_i$, say $C_2\subseteq G^+_i$. Notice also that $C_2$ is not included in the carrier $N(E_i)$.   Set $G_1:=G^-_i\cup N^+(E_i)$ and $G_2:=G^+_i$.
By Proposition \ref{carriers}, $G_1,G_2,$ and $G_1\cap G_2=N^+(E_i)$ are two-dimensional partial cubes, thus $G$ is a 2d-amalgam of $G_1$ and $G_2$.
Conversely, suppose that a partial cube $G$ is a 2d-amalgam of two-dimensional partial cubes $G_1$ and $G_2$. Consider the canonical COM completions $G_1\urcorner$ and $G_2\urcorner$ of $G_1$ and $G_2$, which are in $\mathcal{F}(Q_3)$ by the Lemma \ref{1extension}.
Then $G_1\urcorner\cap G_2\urcorner$ coincides with $G_{12}\urcorner$. Therefore, by \cite[Proposition 7]{BaChKn} this provides a COM $G'$, which is a COM amalgam of $G_1\urcorner$ and
$G_2\urcorner$ along $G_{12}\urcorner$ without creating new antipodal subgraphs. Using the Lemma \ref{lem:rankofcell}, we deduce that $G'\in {\mathcal F}(Q_3)$. Since the graph $G$ is isometrically embedded in $G'$, $G\in {\mathcal F}(Q_3)$,
which concludes the proof.
\end{proof}

The 2-dimensional cell complex ${\mathbf C}(G)$ of a partial cube $G$ is simply connected but not contractible even if $G$ is two-dimensional. However, for a two-dimensional partial cube $G$
there is a simple remedy: one can consider the {\it (combinatorial) cell complex} having gated cycles and gated full
subdivisions of $G$ as cells. However, since full subdivisions cannot be directly
represented by Euclidean cells, this complex does not have a direct geometric meaning. One possibility is to replace each gated full subdivision $SK_n$ by a regular
Euclidean simplex with sides of length 2 and  each gated cycle  by a regular Euclidean polygon. Denote the resulting polyhedral complex by ${\mathbf X}(G)$. Notice that two cells of
${\mathbf X}(G)$ can intersect in an edge of a polygonal cell or in a half-edge of a simplex. This way, with each two-dimensional partial cube $G$ we associate a polyhedral complex
${\mathbf X}(G)$ which may have cells of arbitrary dimensions. Alternatively, one can associate to $G$ the cell complex ${\mathbf C}(G\urcorner)$ of
the canonical COM completion $G\urcorner$ of $G$. Recall that in ${\mathbf C}(G\urcorner)$, each gated cycle of $G$ is replaced by a regular Euclidean polygon and each gated full subdivision $SK_n$
of $G$ is extended in $G\urcorner$ to $SK^*_n$ and this correspond to a bouquet of squares in ${\mathbf C}(G\urcorner)$. Thus ${\mathbf C}(G\urcorner)$ is a 2-dimensional cell complex.

\begin{corollary} \label{contractible} If $G\in {\mathcal F}(Q_3)$, then the complexes  ${\mathbf X}(G)$ and ${\mathbf C}(G\urcorner)$ are contractible.
\end{corollary}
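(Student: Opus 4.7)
The plan is to first prove contractibility of ${\mathbf C}(G\urcorner)$, then deduce contractibility of ${\mathbf X}(G)$ via a homotopy equivalence between the two complexes. By Proposition~\ref{prop:extendtoCOM}, $G\urcorner$ is a two-dimensional COM. Recall from the introduction that [BaChKn, Section~11] establishes that the cell complex of any COM — obtained by replacing each combinatorial face $F(X)$ by a PL-ball — is contractible. In a rank-$2$ COM the faces are OMs of rank at most $2$, whose tope graphs are single vertices, single edges, or even cycles; hence the $2$-cells of the COM cell complex are exactly polygons filling the convex cycles of the tope graph. Consequently, the COM cell complex of $G\urcorner$ coincides with ${\mathbf C}(G\urcorner)$, which is therefore contractible.

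To handle ${\mathbf X}(G)$, I would compare it with ${\mathbf C}(G\urcorner)$. The two complexes share a common subcomplex $Y$, namely the union of all vertices, edges, and polygonal $2$-cells coming from gated cycles of $G$ (identified via the isometric inclusion $G \hookrightarrow G\urcorner$ of Proposition~\ref{prop:extendtoCOM}). They differ only in how each maximal gated full subdivision $H = SK_n$ of $G$ is filled: ${\mathbf X}(G)$ attaches an $(n{-}1)$-simplex $\Delta^{n-1}$ along the $1$-skeleton $SK_n$ (with original vertices $u_i$ as simplex vertices and subdivision vertices $u_{i,j}$ as midpoints of simplex edges), while ${\mathbf C}(G\urcorner)$ attaches a bouquet of $\binom{n}{2}$ squares $(v_\varnothing, u_i, u_{i,j}, u_j)$ glued at the added vertex $v_\varnothing$. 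Both the simplex and the bouquet of squares are contractible and are attached along $SK_n$. By the standard pushout argument — replacing a contractible cap attached along a cofibration by another contractible cap along the same cofibration yields a homotopy equivalence — we get ${\mathbf X}(G) \simeq {\mathbf C}(G\urcorner)$, so ${\mathbf X}(G)$ is contractible as well.

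The main obstacle is the precise identification in the first step of the COM cell complex of $G\urcorner$ with ${\mathbf C}(G\urcorner)$ as defined in the paper: one must check that the $2$-dimensional faces of the rank-$2$ COM $G\urcorner$ correspond bijectively to the convex cycles of its tope graph. If this identification is delicate, an alternative is to prove both contractibility statements simultaneously by induction on the number of gated cells using the amalgam decomposition of Proposition~\ref{amalgam}. The base case (a single gated cycle giving a polygon, a single gated full subdivision giving either an $(n{-}1)$-simplex in ${\mathbf X}(G)$ or a bouquet of squares in ${\mathbf C}(G\urcorner)$) is immediate since all these are contractible. The inductive step would apply the gluing lemma for contractible subcomplexes to the decomposition ${\mathbf X}(G) = {\mathbf X}(G_1) \cup {\mathbf X}(G_2)$ with intersection ${\mathbf X}(G_{12})$ (and analogously for ${\mathbf C}(\cdot\urcorner)$), where $G_{12} = N^+(E_i)$ is itself a two-dimensional partial cube by Proposition~\ref{carriers}. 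The delicate point for this alternative would be verifying the cell-level decomposition at the intersection, which boils down to gatedness being preserved between $G$ and the isometric subgraphs $G_1, G_2, G_{12}$.
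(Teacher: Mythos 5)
Your argument for ${\mathbf C}(G\urcorner)$ is exactly the paper's: invoke Proposition~\ref{prop:extendtoCOM} and the contractibility of COM cell complexes from \cite[Proposition 15]{BaChKn}. For ${\mathbf X}(G)$, however, you take a genuinely different route. The paper proves contractibility of ${\mathbf X}(G)$ by induction on the number of maximal cells, using the 2d-amalgam decomposition of Proposition~\ref{amalgam} together with the gluing lemma applied to ${\mathbf X}(G_1),{\mathbf X}(G_2),{\mathbf X}(G_{12})$ --- precisely the ``alternative'' you sketch at the end. Your primary argument instead produces a single homotopy equivalence ${\mathbf X}(G)\simeq {\mathbf C}(G\urcorner)$ by swapping, over each maximal gated full subdivision $SK_n$, the solid simplex for the bouquet of squares; the homotopy-theoretic core is sound, since both caps are contractible, both inclusions $SK_n\hookrightarrow (\mathrm{cap})$ are cofibrations of CW pairs, and hence both unions are homotopy equivalent to the quotient of the common part by the attached copies of $SK_n$ (performing the swaps one full subdivision at a time handles the fact that distinct gated cells may share an edge). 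What this buys is a direct structural comparison of the two complexes rather than two parallel inductions; what it costs is the cell-level bookkeeping you yourself flag: one must check that away from the full subdivisions the two complexes literally share their $2$-cells, i.e., that the convex cycles of $G\urcorner$ other than the new squares $(v_\varnothing,u_i,u_{i,j},u_j)$ are exactly the gated cycles of $G$. Beyond Lemma~\ref{1extension}, this uses three facts the paper does not state explicitly but which do hold: gated cycles of $G$ remain convex in $G\urcorner$; the convex but non-gated $6$-cycles sitting inside a maximal full subdivision lose convexity once $v_\varnothing$ is added (their convex hull then contains $v_\varnothing$); and $4$-cycles of a partial cube are always gated (a short argument with the two $\Theta$-classes of the square and the sets $W(\cdot,\cdot)$ rules out a closest vertex without a gate). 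Since these are all verifiable, and since your fallback via Proposition~\ref{amalgam} and the gluing lemma is exactly the published proof, the proposal is correct.
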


\begin{proof} That ${\mathbf C}(G\urcorner)$ is contractible follows from the fact that $G\urcorner$ is a two-dimensional COM (Proposition \ref{prop:extendtoCOM}) and that
the cell complexes of COMs are contractible (Proposition 15 of \cite{BaChKn}). The proof that ${\mathbf X}(G)$ is contractible uses the same arguments as the
proof of \cite[Proposition 15]{BaChKn}. We prove the contractibility of ${\mathbf X}(G)$ by induction on the number of maximal cells of ${\mathbf X}(G)$ by using the
gluing lemma \cite[Lemma 10.3]{Bj} and Proposition \ref{carriers}. By the gluing lemma, if $\bf X$ is a cell complex which
is the union of two contractible cell complexes ${\bf X}_1$ and ${\bf X}_2$ such that their intersection ${\bf X}_1\cap {\bf X}_2$ is contractible, then $\bf X$
is contractible. If ${\mathbf X}(G)$ consists of a single maximal cell, then this cell is either a polygon or a simplex, thus is contractible. If ${\mathbf X}(G)$ contains at least two cells, then
by the first assertion of Proposition \ref{amalgam} $G$ is a 2d-amalgam of two-dimensional partial cubes $G_1$ and $G_2$ along a two-dimensional partial cube $G_{12}$.
By induction assumption, the complexes ${\mathbf X}(G_1)$, ${\mathbf X}(G_1)$, and ${\mathbf X}(G_{12})={\mathbf X}(G_1)\cap {\mathbf X}(G_2)$  are contractible, thus ${\mathbf X}(G)$
is contractible by gluing lemma.
\end{proof}


\section{Characterizations  of two-dimensional partial cubes}
The goal of this section is to give a characterization of two-dimensional partial cubes, summarizing all the properties established in the previous sections:

\begin{theorem} \label{characterization}
For a partial cube $G=(V,E)$ the following conditions are equivalent:
\begin{itemize}
\item[(i)] $G$ is a two-dimensional partial cube;
\item[(ii)] the carriers $N(E_i)$ of all $\Theta$-classes of $G$, defined with respect to the cell complex ${\mathbf C}(G)$, are two-dimensional partial cubes;
\item[(iii)] the hyperplanes of $G$ are virtual isometric trees;
\item[(iv)] $G$ can be obtained from the one-vertex graph via a sequence $\{ (G_i^1,G^0_i,G^2_i): i=1,\ldots,m\}$ of isometric expansions,
where each $G^0_i, i=1,\ldots,m$ has VC-dimension $\le 1$;
\item[(v)] $G$ can be obtained via 2d-amalgams from even cycles and full subdivisions;
\item[(vi)] $G$ has an extension to a two-dimensional ample partial cube.
\end{itemize}
Moreover, any two-dimensional partial cube $G$ satisfies the following condition:
\begin{itemize}
\item[(vii)] the gated hull of each isometric cycle of $G$ is a disk or a full subdivision.
\end{itemize}
\end{theorem}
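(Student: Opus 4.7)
The plan is to treat Theorem~\ref{characterization} as a hub-and-spoke summary with (i) at the center, since each of (ii)--(vii) is either proved in the preceding sections or is one short step away. I would first list the spokes, then identify the single implication that needs a stand-alone argument.

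For (i)$\Leftrightarrow$(iii) I would simply concatenate Corollary~\ref{VCdim_1} (which says $G\in\mathcal{F}(Q_3)$ iff every hyperplane $H_i$ has VC-dimension at most $1$) with Proposition~\ref{virtual_isometric_tree} (which identifies induced subgraphs of $Q_m$ of VC-dimension at most $1$ with virtual isometric trees). For (i)$\Leftrightarrow$(iv) I would combine Proposition~\ref{pc-expansion} with the $d=2$ instance of Proposition~\ref{expansion-Qd+1}: a partial cube is built from a point by isometric expansions, and staying inside $\mathcal{F}(Q_3)$ at each step is exactly equivalent to the corresponding $G^0_i$ having VC-dimension at most $1$. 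The equivalence (i)$\Leftrightarrow$(v) is precisely Proposition~\ref{amalgam}; (i)$\Rightarrow$(vi) is Theorem~\ref{thm:extendtoample}, and the reverse (vi)$\Rightarrow$(i) is routine because VC-dimension is monotone under taking isometric subgraphs, so Lemma~\ref{VCdim_d} transfers $\mathcal{F}(Q_3)$-membership from a two-dimensional ample extension down to $G$.

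The forward direction of (i)$\Leftrightarrow$(ii) is Proposition~\ref{carriers}. The only implication that I expect to require a short independent argument is (ii)$\Rightarrow$(i). My plan here is the contrapositive: if some hyperplane $H_i$ has VC-dimension at least $2$, then $\partial G^-_i$ shatters some pair $\{j,k\}$ via four vertices; transporting these across the edges of $E_i$ (which are single-coordinate flips, so the $j,k$-coordinates are preserved and the images lie in $\partial G^+_i$) yields four mirror vertices in $\partial G^+_i$ shattering the same pair. The resulting eight vertices all lie in the carrier $N(E_i)$ and jointly shatter $\{i,j,k\}$, forcing $\vcd(N(E_i))\ge 3$ and contradicting (ii). Then Corollary~\ref{VCdim_1} yields (i).

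For property (vii) I would just invoke Theorem~\ref{FS+AOM}: the gated hull of an isometric $6$-cycle is $C_6$, $Q_3^-$, or a maximal full subdivision, and for a longer isometric cycle $C$ the convex hull $\conv(C)$ is already gated and is a disk, hence equals $\gate(C)$. Observing that $C_6$ and $Q_3^-$ are themselves disks puts the statement in the required form. The ``main obstacle'' is therefore organizational rather than mathematical: every substantive piece has been proved earlier, and beyond the small shattering argument for (ii)$\Rightarrow$(i) the task is to cite the correct result for each spoke and verify that nothing is lost in assembling them.
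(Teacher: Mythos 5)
Your proposal is correct and takes essentially the same approach as the paper: the published proof assembles the theorem from exactly the ingredients you cite (Propositions \ref{carriers}, \ref{VCdim_pc}, \ref{virtual_isometric_tree}, \ref{expansion-Qd+1}, \ref{amalgam}, and Theorems \ref{thm:extendtoample} and \ref{FS+AOM}). The only cosmetic difference is that the paper closes the loop as (i)$\Rightarrow$(ii)$\Rightarrow$(iii)$\Rightarrow$(i), deducing (iii) from (ii) by noting that the hyperplane of $E_i$ inside $N(E_i)$ coincides with $H_i$ in $G$, whereas you prove (ii)$\Rightarrow$(i) directly; your shattering argument for that step is precisely the first paragraph of the proof of Proposition \ref{VCdim_pc}, so nothing new is required.
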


\begin{proof} The implication (i)$\Rightarrow$(ii) is the content of Proposition \ref{carriers}. To prove that (ii)$\Rightarrow$(iii) notice that since $N(E_i)$
is a two-dimensional partial cube, by Propositions \ref{VCdim_pc} and \ref{virtual_isometric_tree} it follows that the hyperplane
of the $\Theta$-class $E_i$ of $N(E_i)$ is a virtual isometric tree.
Since this hyperplane of $N(E_i)$ coincides with the hyperplane $H_i$ of $G$, we deduce that all hyperplanes of $G$ are virtual isometric trees, establishing (ii)$\Rightarrow$(iii).
The implication (iii)$\Rightarrow$(i) follows from Propositions \ref{VCdim_pc} and \ref{virtual_isometric_tree}.
The equivalence  (i)$\Leftrightarrow$(iv) follows
from  Proposition \ref{expansion-Qd+1}. The equivalence (i)$\Leftrightarrow$(v) follows from Proposition \ref{amalgam}.  The implication (i)$\Rightarrow$(vi) follows from Theorem \ref{thm:extendtoample} and the implication
(vi)$\Rightarrow$(i) is evident. Finally, the implication (i)$\Rightarrow$(vii) is the content of Theorem \ref{FS+AOM}.
\end{proof}

Note that it is not true that if in a partial cube $G$ the convex hull of every isometric cycle is in $\mathcal{F}(Q_3)$, then $G\in\mathcal{F}(Q_3)$; see $X_2^4$ in Figure~\ref{fig:COMobstructions}.
However, we conjecture that the condition (vii) of Theorem \ref{characterization} is equivalent to conditions (i)-(vi):

\begin{conjecture} Any partial cube $G$ in which the gated hull of each isometric cycle  is a disk or a full subdivision is two-dimensional.
\end{conjecture}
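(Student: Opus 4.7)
I would proceed by contradiction: suppose $G$ is a partial cube in which the gated hull of every isometric cycle is a disk or a full subdivision, yet $G\notin\mathcal{F}(Q_3)$, and choose $G$ of minimum order subject to these conditions. The starting observation is that the hypothesis passes to gated subgraphs: if $H$ is gated in $G$ and $C\subseteq H$ is an isometric cycle of $H$, then $C$ is also isometric in $G$, and since $H$ is itself a gated subgraph of $G$ containing $C$, the gated hull of $C$ in $G$ is already contained in $H$ and therefore coincides with the gated hull of $C$ in $H$; this common gated hull is a disk or a full subdivision by the hypothesis on $G$. By minimality of $G$, every proper gated subgraph of $G$ therefore belongs to $\mathcal{F}(Q_3)$, so the only offender is $G$ itself.

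Since $G\notin\mathcal{F}(Q_3)$, Lemma~\ref{VCdim_d} yields three $\Theta$-classes $E_a,E_b,E_c$ whose triple is shattered. I would then examine an isometric-expansion sequence producing $G$ from a single vertex (Proposition~\ref{pc-expansion}); at the first step where the VC-dimension jumps to $3$, Proposition~\ref{expansion-Qd+1} combined with Proposition~\ref{virtual_isometric_tree} forces the contact subgraph $G^0$ to carry a convex $4$-cycle, which the expansion then turns into a convex $3$-cube $K\cong Q_3$ of the resulting partial cube. Lemma~\ref{convex_expansion} guarantees that $K$ remains a convex subgraph of $G$ after all subsequent expansions.

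The crucial step is to locate an isometric cycle $C^\star$ of $G$ whose gated hull is forced to contain $K$ as a pc-minor; since disks and full subdivisions both lie in $\mathcal{F}(Q_3)$, such a cycle immediately contradicts the hypothesis and closes the argument. The $4$-faces of $K$ themselves are insufficient because their gated hulls can coincide with the $4$-cycle itself, which is a disk. My plan is to pick two opposite $2$-faces $F_0,F_1$ of $K$, say the two halfspaces cut by $E_c$ within $K$, and, exploiting that every proper gated subgraph of $G$ is two-dimensional, to build geodesics in $G$ between matched corners of $F_0$ and $F_1$ that avoid the interior of $K$; the concatenation of these geodesics with appropriate edges of $F_0\cup F_1$ is intended to produce an isometric cycle of length at least $6$ whose gated hull contains $K$ as a convex (hence pc-minor) subgraph.

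The main obstacle is precisely the construction and isometricity of such an encircling cycle $C^\star$: one must prevent shortcuts through $G$ from collapsing the cycle, and one must ensure that the gated hull of $C^\star$ actually captures all eight corners of $K$. I expect this step to require a Helly-type argument on the halfspaces defined by $E_a,E_b,E_c$ in the spirit of Claim~\ref{lem:affine}, combined with the $2$d-amalgam structure of Proposition~\ref{amalgam} applied inductively to the two-dimensional proper gated subgraphs of $G$. If this final step can be carried out, the contradiction is immediate and the conjecture is established.
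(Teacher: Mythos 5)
You are attempting to prove a statement that the paper itself leaves open: it is stated as a conjecture and no proof is given, so the only question is whether your argument is internally sound. Your opening reduction (the hypothesis passes to proper gated subgraphs, so in a minimal counterexample every proper gated subgraph lies in $\mathcal{F}(Q_3)$) is correct, but the argument then breaks down at two points. First, the step producing a convex $3$-cube $K\cong Q_3$ inside $G$ is wrong. Proposition~\ref{expansion-Qd+1} only tells you that at the first expansion where the VC-dimension jumps to $3$ the subgraph $G^0$ has VC-dimension $2$; a subgraph of VC-dimension $2$ need not contain a convex $4$-cycle (a $6$-cycle already has VC-dimension $2$), and, more importantly, a partial cube of VC-dimension $3$ need not contain $Q_3$ as a convex subgraph at all. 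For instance $Q_4^{--}=Q_4\setminus\{(0,0,0,0),(1,1,1,1)\}$ shatters any three coordinates, hence has VC-dimension $3$, yet every halfspace of it is a $Q_3^-$ and no intersection of two or more halfspaces has $8$ vertices, so it has no convex $Q_3$. Having $Q_3$ only as a pc-minor rather than as a convex subgraph invalidates the remainder of your plan, which speaks of the eight corners and the $2$-faces of $K$ as actual vertices and subgraphs of $G$.

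Second, even granting a convex $Q_3$, the core of the argument --- the construction of an isometric cycle $C^\star$ whose gated hull contains $K$ (or a $Q_3$-minor) --- is explicitly left unproven in your own words (``If this final step can be carried out\dots''). This is not a routine verification: the gated hull of an isometric cycle collapses to the cycle itself whenever that cycle is gated, and nothing in the hypothesis prevents every isometric cycle of $G$ from having a small gated hull (a disk or a full subdivision) while the $Q_3$-minor arises only ``globally'', from the way these cells are amalgamated. Localizing the obstruction $Q_3$ inside the gated hull of a single isometric cycle is precisely the difficulty that makes this statement a conjecture rather than a theorem, and the sketch does not overcome it. As it stands the proposal is a plausible plan of attack with one incorrect step and one missing central argument, not a proof.
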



\section{Final remarks} In this paper, we provided several characterizations of two-dimensional partial cubes via hyperplanes, isometric expansions and amalgamations, cells and carriers, and gated hulls of isometric cycles.
One important feature of such graphs is that gated hulls of isometric cycles have a precise structure: they are either full subdivisions of complete graphs or disks, which are plane graphs representable
as graphs of regions of pseudoline arrangements. Using those results, first we show that any two-dimensional partial cube $G$ can be completed in a canonical way to a COM $G\urcorner$ of rank $2$  and that $G\urcorner$ can be further completed to
an ample partial cube $G^{\top}:=(G\urcorner)\ulcorner$ of VC-dimension $2$. Notice that $G$ is isometrically embedded in $G\urcorner$ and that $G\urcorner$ is isometrically embedded in $G^{\top}$. This answers in the positive (and in the strong way)
the question of \cite{MoWa} for partial cubes of VC-dimension $2$. However, for Theorem~\ref{thm:extendtoample} it is essential that the input is a partial cube:  Figure~\ref{fig:T2} presents a (non-isometric) subgraph $Z$ of $Q_4$ of VC-dimension $2$, such
that any ample partial cube containing $Z$ has VC-dimension $3$. Therefore, it seems to us interesting and nontrivial to solve the question of \cite{RuRuBa} and \cite{MoWa} {\it for all (non-isometric) subgraphs of hypercubes of VC-dimension $2$} (alias,
for arbitrary set families of VC-dimension 2).

\begin{figure}
\centering
\includegraphics[width=.25\textwidth]{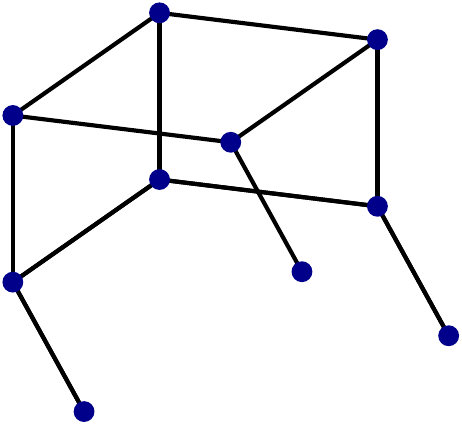}
\caption{A subgraph $Z$ of $Q_4$ of VC-dimension $2$, such that any ample partial cube containing $Z$ has VC-dimension $3$.}
\label{fig:T2}
\end{figure}

It is also important to investigate the completion questions of \cite{MoWa} and \cite{RuRuBa} for all partial cubes from ${\mathcal F}(Q_{d+1})$ (i.e., for partial cubes of VC-dimension $\le d$). For this, it will be interesting to see which results for partial cubes from ${\mathcal F}(Q_3)$ can be extended to graphs from ${\mathcal F}(Q_{d+1})$. We have the impression, that some of the results on disks can be extended to balls; a partial cube is a \emph{$d$-ball} if $G\in\mathcal{F}(Q_{d+1})$ and $G$ contains an isometric antipodal subgraph $C\in\mathcal{F}(Q_{d+1})$ such that $G=\conv(C)$. With this is mind,  one next step would be to study the class $\mathcal{F}(Q_{4})$.



\section*{Acknowledgements}
We are grateful to the anonymous referees for a careful reading  of the paper and numerous useful comments and improvements.
This work was supported by the ANR project DISTANCIA (ANR-17-CE40-0015). The second author moreover was supported by the Spanish \emph{Ministerio de Econom\'ia,
Industria y Competitividad} through grant RYC-2017-22701.

\bibliographystyle{siamplain}
%

\end{document}